\documentclass{amsart}
\usepackage[erhan]{hzmath}

\makeatletter
\@namedef{subjclassname@2020}{%
  \textup{2020} Mathematics Subject Classification}
\makeatother

\begin{document}

\title[Estimations for Graphon]{Non-parametric estimates for graphon mean-field particle systems}

\author[Bayraktar]{Erhan Bayraktar}
\address{%
	Department of Mathematics,
	University of Michigan,
	Ann Arbor, MI 48109.}
\email{erhan@umich.edu  }
\author[Zhou]{Hongyi Zhou}
\address{%
	Department of Mathematics,
	University of Michigan,
	Ann Arbor, MI 48109.}
\email{hongyizh@umich.edu	}

\subjclass[2020]{Primary
	62G07, % Density estimation
	62H22, % Probabilistic graphical models
	62M05; % Markov processes: estimation; hidden Markov models
	Secondary
	05C80, % Random graphs
	60J60, % Diffusion processes
	60K35. % Interacting random processes; statistical mechanics type models; percolation theory
}
\keywords{graphon mean-field system, interacting particles, kernel estimation, minimax analysis}
\thanks{This research is supported in part by the National Science Foundation.}
\date{March 9, 2024}

\begin{abstract}
	We consider the graphon mean-field system introduced in the work of Bayraktar, Chakraborty, and Wu.
	It is the large-population limit of a heterogeneously interacting diffusive particle system,
where the interaction is of mean-field type with weights characterized by an underlying graphon function.
	Through observation of continuous-time trajectories within the particle system,
we construct plug-in estimators of the particle density, the drift coefficient, and thus the graphon interaction weights of the mean-field system.  
	Our estimators for the density and drift are direct results of kernel interpolation on the empirical data, and a deconvolution method leads to an estimator of the underlying graphon function.
	We show that, as the number of particles increases, the graphon estimator converges to the true graphon function pointwisely, and as a consequence, in the cut metric.	
	Besides, we conduct a minimax analysis within a particular class of particle systems to justify the pointwise optimality of the density and drift estimators.
\end{abstract}

\maketitle

\tableofcontents

\section{Introduction}\label{s:intro}

We study a statistical method to estimate the interaction strength in the graphon mean-field interaction particle system introduced in~\cite{BayraktarChakrabortyWu2023}. 
The particles in such a system are characterized by not only the feature vector in a physical space $\R^d$ but also the ``type'' indexed by $I = [0, 1]$. 
The interaction strength between particles of different types is quantified by a graphon function $G: I \times I \to [0, 1]$.

Precisely, the system consists of a family of diffusion processes with dynamics
\begin{align}
	\label{e:graphon-SDE}
	X_u(t) & = X_u(0) + \int_0^t \int_I \int_{\R^d} b(X_u(s), x) G(u, v) \mu_{s,v}(dy) dv ds \\
	\nonumber
	& \qquad \qquad + \int_0^t \sigma(X_u(s)) dB_u(s) \,, \qquad t \ge 0  \,, \qquad u \in I \,,
\end{align}
where $b: \R^d \times \R^d \to \R$ and $\sigma: \R^d \to \R^{d \times d}$ are some Lipschitz functions, $\{X_u(0) \st u \in I\}$ are a collection of independent random variables in $\R^d$ with distributions $\{\mu_{0,u} \st u \in I\}$, and $\{B_u \st u \in I\}$ are {i.i.d.}~$d$-dimensional Brownian motions independent of $\{X_u(0) \st u \in I\}$. 
Here, we are assuming that the interactions between particles only happen in the drift term. 

The main purpose of this study is to estimate the graphon function $G$ by continuous observation of a finite-population system. 
It is shown in~\cite{BayraktarChakrabortyWu2023} that system~\eqref{e:graphon-SDE} is the large-population limit of the following system
\begin{align}
	\label{e:finite-pop-SDE}
	X^n_i(t) & = X_{\frac{i}{n}}(0) + \int_0^t \frac{1}{n} \sum_{j=1}^{n} b(X^n_i(s), X^n_j(s)) g^n_{ij} ds \\
	\nonumber
	& \qquad \qquad + \int_0^t \sigma(X^n_i(s)) dB_{\frac{i}{n}}(s) \,, \qquad t \ge 0 \,, \quad i = 1, \dots, n \,, 
\end{align}
where $g^n_{ij} = G(\frac{i}{n}, \frac{j}{n})$. 
We continuously observe~\eqref{e:finite-pop-SDE} over a (fixed) time horizon $[0, T]$. 
Using empirical data, we construct estimators of the particle density and the drift term of~\eqref{e:graphon-SDE}
and finally an estimator of the graphon function $G$. 
The error of our estimation is well-controlled when the number of particles increases, with proper choices of parameters and under certain conditions.

In this work, we are mainly interested in a model resembling the McKean-Vlasov type, where the drift integrand $b$ takes the form
\begin{equation*}
	b(x, y) = F(x - y) + V(x) \,, \qquad x, y \in \R^d \,,
\end{equation*}
for some sufficiently regular functions $F$ and $V$. 
The function $F$ acts as an interacting force between two particles depending on their relative positions, while the function $V$ accounts for an external force applied to every single particle. 
Also, we consider graphon functions $G$ of the form 
\begin{equation*}
	G(u, v) = g(u - v) \,, \qquad u, v \in I \,,
\end{equation*}
for some function $g: \R \to \R$ with certain regularities. 
When we specialize in this case, the problem boils down to estimating the function $g$.

\subsection{Background}\label{s:background}

The study of classical mean-field systems with homogeneous interaction and the associated parabolic equations in the sense of McKean~\cite{McKean1966} dates back to the 1960s.
The original motivation of this study came from plasma theory in statistical physics (see~\cite{Vlasov1961,Sznitman1991,Kolokoltsov2010} and references therein), and its significance in applied mathematics was well demonstrated throughout the past decades.
Several analytic and probabilistic methods were developed during the period to push-forward the study of mean-field systems (see the references in~\cite{MaestraHoffmann2022}).

However, the early formulation of this problem focuses on the theoretical properties of the systems, which highly rely on precise knowledge of the dynamics of the systems. 
Statistical methods that fit those properties into models with noises were still in shortage until the 21st century. 
A modern formulation came to the stage in the 2010s, when the development of other areas of research led to a high demand for statistical inference models. 
Empirical data from a particle system can be utilized as inferences to estimate the dynamics of the system and thus predict its future behaviors. 
Those new ideas are applied to various application fields, including chemical and biological systems~\cite{BaladronFasoliFaugerasTouboul2012,BurgerCapassoMorale2007,MogilnerEdelstein1999}, economics and finance~\cite{FouqueSun2013,Carmona2021}, collective behaviors~\cite{CanutoFagnaniTilli2012,ChazelleJiuLiWang2017}, etc.  

While the features of particles are usually embedded in a physical space $\R^d$, the situations in the study of modern networks can be more complicated. 
Systems with inhomogeneity contain different types of entities (e.g. social networks~\cite{WassermanFaust1994} and power supplies~\cite{SarkarBhattacharjeeSamantaBhattacharyaSaha2019}), and interactions between two individuals also depend on their types. 
Then the idea of studying sophisticated networks using graphon particle systems begins to draw significant attention. 

A heterogeneous particle system can be embedded into a graph (deterministic or random) graph (see~\cite{Coppini2022,CoppiniDietertGiacomin2019,Delarue2017,DelattreGiacomin2016,DupuisMedvedev2020,OliveiraReis2019}), 
where the interaction strength between two types of particles is quantified by the corresponding edge weight. 
As the number of vertices increases and the graph becomes denser, the interaction strength approaches some bounded symmetric kernel $G: [0,1]^2 \to [0,1]$ called \emph{graphon}. 
In fact, every graphon is the limit of a sequence of finite graphs, which is discussed in Chapter 11 of~\cite{Lovasz2012}. 

In recent years, graphon mean field games have demonstrated their ability to model densely interactive networks in multiple studies, e.g. \cite{CarmonaCooneyGraves2021,CainesHuang2021,PariseOzdaglar2023}.
On the purely theoretical side, several results on the stability and stationarity of graphon mean-field systems have been established in~\cite{BayraktarWu2022,BayraktarWu2023,BayraktarChakrabortyWu2023,BayraktarWuZhang2023}, and the concentration of measures is well studied in~\cite{BayraktarWu2023,BayraktarKim2024}. 
These properties enable the study of the mean-field systems from a statistical inference point of view. 

Statistical inference methods are widely applied to learning the dynamics of interactive systems.
Empirical data in a McKean-Vlasov model can be interpolated using a kernel to obtain estimates of particle density~\cite{MaestraHoffmann2022,BelomestnyPilipauskaitePodolskij2023,AmorinoHeidariPilipauskaitePodolskij2023}.
In particular, the data-driven estimation algorithms in~\cite{MaestraHoffmann2022} automatically choose the best kernel bandwidths among a predetermined, possibly opaque set, 
which ensures pointwise optimality even without explicitly specifying the parameters. 
Estimating the interacting force requires more technical tools, including the deconvolution methods introduced by Johannes in~\cite{Johannes2009}. 
These strategies offer firm technical support in the analysis of interactive systems with unknown driving forces, which admits predictions of the evolution of the systems solely based on empirical data.

\subsection{Our contributions and organization of this paper}\label{s:contribute}

Recall the graphon function $G$ in the mean-field system~\eqref{e:graphon-SDE}.
In Section~\ref{s:model}, we introduce a kernel interpolation method adopted from~\cite{MaestraHoffmann2022,BelomestnyPilipauskaitePodolskij2023}.
We make continuous observations of the $n$-particle system~\eqref{e:finite-pop-SDE} during a finite time interval $[0, T]$.
The empirical data from the finite-population system are then interpolated in both the feature space $\R^d$ and the index space $I$ to produce a pointwise estimator $\hat \mu^n_h(t,u,x)$ of the particle density functions $\mu(t,u,x)$.
A further interpolation in the time variable leads to an estimator $\hat \beta^n_{h, \kappa}(t, u, x)$ for the drift coefficients
\begin{equation*}
	\beta(t,u,x) \defeq \int_I G(u, v) (V(x) + F \ast \mu_{t,v} (x)) dv \,.
\end{equation*} 
Then we apply a deconvolution method introduced in~\cite{Johannes2009} to build a pointwise estimator $\hat G^n_{\vartheta}$ of $G$. 
Here 
\begin{equation*}
	\vartheta = (h_1, h_2, h_3, \kappa_0, \kappa_1, \kappa_2, r, \tilde r)
\end{equation*}
are the parameters associated to the estimators:
$h = (h_1,h_2,h_3) \in \R_+^3$ are the bandwidths of the kernels,
$\kappa = (\kappa_0, \kappa_1, \kappa_2) \in \R_+^3$ are the denominator cutoff factors to prohibit fraction blowups,
and $r, \tilde r > 0$ are cutoff radius.
We will explain them with more details in Section~\ref{s:model}. 
We show in Section~\ref{s:main} that there exists a sequence $(\vartheta^n)_{n \in \N}$ of the parameters such that
\begin{equation*}
	\lim_{n \to \infty} \E \abs{\hat G^n_{\vartheta_n} (u_0, v_0) - G(u_0,v_0)}^2 = 0 \,,
\end{equation*}
subject to the regularity conditions.

\vspace{10pt}

We will disclose the particular settings of our problem in Section~\ref{s:model}.
These include the continuity and integrability of the coefficients $F, V, G$ and the initial data $\mu_{0,u}$. 
Then we define the kernel-interpolated estimators $\hat \mu^n_h$ and $\hat \beta^n_{h, \kappa}$, with free choices of the kernel bandwidth vector $h$ and cutoff factors $\kappa$. 
It is worth noticing that the bandwidths of our estimators are fixed throughout the algorithm,
whereas the data-driven Goldenshluger-Lepski estimators applied in~\cite{MaestraHoffmann2022} make dynamic choices of bandwidths from a pre-determined finite set of candidates.
The pre-determined set can be invisible to the users, and the algorithm automatically selects the best candidate to give as output an estimation.
Such algorithm attains the optimal pointwise oracle estimations without precise knowledge of the system's continuity property and does not lose too much efficiency for each tuple of plug-in arguments. 
However, the convergence of our estimator $\hat G^n_\vartheta$ depends on the total $L^2$-errors of the plug-in estimators (instead of the pointwise errors), 
so it becomes more beneficial to fix the bandwidths all along. 
The minimax analysis in Section~\ref{s:minimax} shows that our estimator $\hat \mu^n_h$ is still pointwisely optimal when given enough information. 

We will present upper bounds on the errors of the pointwise estimators in Section~\ref{s:main}, with proofs in Section~\ref{s:estimates-proof}.
The main idea behind the proofs are the stability of the mean-field systems and the concentration of particle density.
We make a direct connection between the (observed) finite-population system~\eqref{e:finite-pop-SDE} and the intrinsic graphon mean field system~\eqref{e:graphon-SDE} by the next inequality step.
With particle density $\mu$, for example, we have
\begin{equation}
	\label{e:telescoping}
	\E \abs{ \hat \mu^n_h - \mu }^2 \le 2 \E \abs{ \hat \mu^n_h - \bar \mu^n_h }^2 + 2 \E \abs{ \bar \mu^n_h - \mu }^2 \,,
\end{equation}
where 
\begin{equation*}
	\bar \mu^n_h (t_0,u_0,x_0) = \frac{1}{n} \sum_{i=1}^n J_{h_2} (u_0 - \frac{i}{n}) K_{h_3} (x_0 - X_{\frac{i}{n}}(t_0)) \,.
\end{equation*}
The first part is controlled by the convergence of~\eqref{e:finite-pop-SDE} to~\eqref{e:graphon-SDE} (see~\cite{BayraktarChakrabortyWu2023}).
For the second part, we follow the idea of~\cite{MaestraHoffmann2022} and produce a Berstein concentration inequality.
The use of Bernstein's inequality here avoids the extra constants that arise from the change of measures in~\cite{MaestraHoffmann2022}, thanks to the independence of particles in the graphon mean-field system. 
It is worth noticing that all the constants appearing in the inequalities are global (independent of the plug-in arguments $t_0,u_0,x_0$), and we keep some of the explicit summations in the upper bound on purpose (as can be seen in Lemma~\ref{l:JK-est-mu} and~\ref{l:HJK-est-pi}).
Those properties preserve the integrability of the whole sums and maintain nice asymptotic behavior of the estimator $\hat G^n_{\vartheta}$.

In Section~\ref{s:minimax}, we perform a minimax analysis on the plug-in estimator of particle density $\mu$ and the drift coefficient $\beta$. 
We restrict our view to those particle systems with locally Hölder continuous density functions, the existence of which can be seen in several classical texts in Fokker-Planck equations such as~\cite{BogachevKrylovRocknerShaposhnikov2015}.
We present an alternative analysis on the pointwise behaviors of $\hat \mu^n_h$ and $\hat \pi^n_h$ with a change-of-measure strategy adapted from~\cite{MaestraHoffmann2022}.
This improves the pointwise errors obtained in Section~\ref{s:estimates} with the sacrifice of a constant factor depending on the value of $\mu$ near $(t_0,u_0,x_0)$. 
Balancing among the several error terms leads to optimal asymptotic upper bounds.
On the other hand, we find the (theoretical) lower bounds of the estimation error and compare them with the upper bounds, which demonstrates the optimality of our estimators. 
The proofs are given in Section~\ref{s:proof-minimax}.

\section{Model and estimators}\label{s:model}

\subsection{Setting, notation and assumptions}\label{s:setting}

Let us fix a finite time horizon $T > 0$. 
All observations are made within the time interval $[0, T]$. 
We denote by $\c_d$ the space of $\R^d$-valued continuous functions on $[0, T]$, i.e.\ $\c_d \defeq C([0, T]; \R^d)$. 
In more general cases, we write $C^k(\mathcal{X}; \mathcal{Y})$ for the space of $k$-times continuously differentiable functions defined on $\mathcal{X}$ taking values in $\mathcal{Y}$. 
Similarly, we write $L^p(\mathcal{X}; \mathcal{Y})$ for the space of $p$-th Lebesgue-integrable functions, and $W^{s,p}(\mathcal{X}; \mathcal{Y})$ for the Sobolev space. 
The position of $\mathcal{Y}$ is often neglected if $\mathcal{Y} = \R$. 

An $\R^d$-valued function $f$ can be written componentwise $(f_k)_{1 \le k \le d}$. 
The Fourier transform of a function $f: \R^d \to \R^d$ is defined componentwise via 
\begin{equation*}
	\f_{\R^d} f (\xi) = (\f_{\R^d} f_k (\xi))_{1 \le k \le d} = \left( \int_{\R^d} e^{-i x \xi} f_k(x) dx \right)_{1 \le k \le d} \,.
\end{equation*}
This will be applied to the drift coefficients in our deconvolution method.

We usually have the following assumptions on the graphon mean field system~\eqref{e:graphon-SDE}.
\begin{condition}
	\label{cd:coeffs}
	\begin{enumerate}
		\item The drift coefficient $b: \R^d \times \R^d \to \R^d$ is bounded and has bounded first derivatives. 
		It is Lipschitz continuous in the sense that there exists some constant $C > 0$ such that
		\begin{equation}
			\label{e:b-lip}
			\abs{b(x,y) - b(x',y')} \le C(\abs{x-x'} + \abs{y-y'}) \,, \qquad x, x', y, y' \in \R^d \,.
		\end{equation}
		\item The drift coefficient $b$ takes the form $b(x, y) = F(x - y) + V(x)$, where $F, V \in W^{1,p} (\R^d)$ for $p = 1,2,\infty$.
		
		\item The diffusion coefficient $\sigma: \R^d \to \R^{d \times d}$ is Lipschitz in the operator norm on $\R^{d \times d}$, i.e.\ there exists some constant $C > 0$ such that
		\begin{equation*}
			\norm{\sigma(x) - \sigma(x')} \le C \abs{x - x'} \,, \qquad x, x' \in \R^d \,,
		\end{equation*}
		where $\norm{\cdot}$ is the operator norm of ${d \times d}$ matrices. 
		\item The diffusion coefficient $\sigma$ is bounded in the sense that there exist constants $\sigma_\pm > 0$ such that 
		\begin{equation*}
			\sigma_{-}^2 I \preceq \sigma \sigma^T \preceq \sigma_{+}^2 I \,,
		\end{equation*}
		where two square matrices $M$ and $N$ satisfy $M \preceq N$ if $N - M$ is positive semi-definite. 
	\end{enumerate}
\end{condition}

Recall that the types of particles are indexed by $I = [0, 1]$, 
and the interaction strength between particles of two types is given by a graphon function $G: I \times I \to [0, 1]$. 
We consider the following conditions for the structure of the graphon function. 
\begin{condition}
	\label{cd:graphon}
	\begin{enumerate}
		\item The graphon function is piecewise Lipschitz in the sense that there exists a constant $C > 0$ and a finite partition $\bigcup_{j \in J} I_j$ of $I$, such that 
		\begin{equation*}
			\abs{G(u,v) - G(u',v')} \le C(\abs{u-u'} + \abs{v-v'}) \,, \qquad (u,v), (u',v') \in I_i \times I_j, \; i,j \in J \,.
		\end{equation*}

		\item The graphon function $G$ has the form $G(u,v) = g(u-v)$, where $g: \R \to [0, 1]$ is a Lipschitz continuous function with $g(0) = g_0 \in (0, 1]$ a given constant. 

		\item Upon item (2), we have further that the Fourier transform of $g$ is in $L^1 \cap L^2$ and decays fast enough, so that 
		\begin{equation*}
			\tilde{r}^2 \int_{\abs{w}>\tilde r} \abs{\f g(w)}^2 dw \to 0
		\end{equation*}
		as $\tilde r \to \infty$. 
	\end{enumerate} 
\end{condition}

Finally, we examine the initial state of the system. 
\begin{condition}
	\label{cd:init-data}
	We denote by $\prob(S)$ the space of probability measures on a Polish space $S$ (e.g.\ $\R^d$, $\c_d$). 
	\begin{enumerate}
		\item The initial distributions $\mu_{0, u}(dx)$ admit density functions $x \mapsto \mu(0, u, x)$ with respect to the Lebesgue measure on $\R^d$. 
		There exist some constant $c_0 > 0$, $c_1 \ge 1$ such that 
		\begin{equation*}
			\sup_{u \in I} \int_{\R^d} \exp (c_0 \abs{x}^2) \mu(0,u,x) \le c_1 \,.
		\end{equation*}

		\item There exists a constant $C > 0$ and a finite collection of intervals $\{I_j\}_{j \in J}$ such that $\bigcup_{j \in J} I_j = I$, and 
		\begin{equation*}
			\was_2(\mu_{0,u}, \mu_{0,v}) \le C\abs{u-v} \,, \qquad u, v \in I_j, \quad j \in J \,,
		\end{equation*}
		where $\was_2: \prob(\R^d) \times \prob(\R^d) \to [0, \infty]$ is the Wasserstein 2-distance. 

		\item There exists a function $\rho_I \in L^2 \cap L^\infty (\R^d)$ such that $\abs{\mu_{0,u} - \mu_{0,v}} \le \rho_I \abs{u-v}$ almost everywhere, for every $u,v \in I$. 
	\end{enumerate}
\end{condition}

The (continuously indexed) graphon mean-field system built on appropriately chosen conditions from above has dynamics 
\begin{align*}
	d X_u(t) = \int_I \int_{\R^d} b(X_u(t),x) G(u,v) \mu_{t,v}(dx) dv dt + \sigma(X_u(t)) dB_u(t) \,, \qquad X_u(0) \sim \mu_{0,u} \,,
\end{align*}
for $u \in I$, $t \in [0, T]$. 
Define the drift term 
\begin{equation*}
	\beta(t,u,x,\mu_t) \defeq \int_I \int_{\R^d} b(x,y) G(u,v) \mu_{t,v}(dy) dv 
\end{equation*}
and observe that $\beta: [0, T] \times I \times \R^d \times \prob(\R^d)^I$ is measurable. 
We will always abbreviate it as $\beta(t,u,x)$ (ignoring the mean-field argument). 
Under Condition~\ref{cd:coeffs}(1)(3), we know that $\beta(t,u,\cdot)$ is Lipschitz continuous and has at most linear growth for every $t \in [0, T]$ and $u \in I$. 
This means $\mu_{t,u}$ is the unique weak solution to the associated Fokker-Planck equation of the diffusion process $dX_u(t) = \beta(t,u, X_u(t)) dt + \sigma(X_u(t)) dB_u(t)$,
and the map $I \ni u \mapsto (\mu_{t,u})_{t \in [0, T]} \in \prob(\c_d)$ is measurable due to Proposition 2.1 in~\cite{BayraktarChakrabortyWu2023}.
Further, with Condition~\ref{cd:coeffs}(4) and~\ref{cd:init-data}(1), every $\mu_{\cdot,u}$ admits a density function $\mu(t,u,x)$ with respect to the Lebesgue measure on $[0, T] \times \R^d$ (see~\cite{BogachevKrylovRocknerShaposhnikov2015}).
Note that $\mu: [0, T] \times I \times \R^d \to \R_+$ is measurable. 
We claim that the densities are asymptotically bounded.
\begin{proposition}
	\label{pp:mu-L2-bound}
	Assume Conditions~\ref{cd:coeffs}(1)(3)(4), \ref{cd:init-data}(1), and that $b$ is almost everywhere bounded. 
	There exists some $C, R > 0$ such that, for every $p > d+2$ and every bounded open set $U$ disjoint from the closed ball $\overline{B(0,R)}$, we have for all $t \in (0,T)$ and $u \in I$ that 
	\begin{equation*}
		\norm{\mu_{t,u}}_{L^\infty(U)} \le C \norm{\mu_{0,u}}_{L^\infty(U)} + C t^{\frac{p-d-2}{2}} (1 + \norm{b}_\infty^p) \,.
	\end{equation*} 
	As a consequence, 
	\begin{equation*}
		\sup_{t \in [0, T], u \in I} \norm{\mu_{t,u} \one{ \{\abs{x} > R\} } }_2 < \infty \,.
	\end{equation*} 
\end{proposition}
The proof will be given in Appendix~\ref{s:pf-technical}. 
It also shows the $L^2$-integrability of the density function $\mu_{t,u}$ at any $t \in (0,T)$ (see also Corollary~8.2.2, \cite{BogachevKrylovRocknerShaposhnikov2015}). 

Our goal in the next section is to construct estimations of the functions $\mu(t,u,x)$ and $\beta(t,u,x)$, to give an estimation of $G(u,v) = g(u-v)$. 
We will use the $L^2$-distance in the probability space to describe our estimation errors.

\subsection{Plug-in estimators}\label{s:plugin}

To estimate the underlying functions described in the previous paragraph, we make continuous-time observations of the $n$-particle system~\eqref{e:finite-pop-SDE},
\begin{align*}
	X^n_i(t) & = X_{\frac{i}{n}}(0) + \int_0^t \frac{1}{n} \sum_{j=1}^n b(X^n_i(s), X^n_j(s)) g^n_{ij} ds + \int_0^t \sigma(X^n_i(s)) dB_{\frac{i}{n}}(s) \,,\\ 
	& \qquad i = 1, \dots, n \,,
\end{align*}
where $g^n_{ij} = G(\frac{i}{n}, \frac{j}{n})$ for every $i, j \in \{1, \dots, n\}$. 
This finite system shows some consistency with respect to the mean-field system in the following sense.
\begin{lemma}[Theorem 3.2, \cite{BayraktarChakrabortyWu2023}]
	\label{l:conv-of-system}
	Assume Conditions~\ref{cd:coeffs}(1)(3), \ref{cd:graphon}(1), and~\ref{cd:init-data}(1)(2) hold. 
	Then there exists a constant $C > 0$ such that 
	\begin{equation}
		\sup_{t \in [0, T]} \max_{1 \le i \le n} \E \abs{X^n_{i}(t) - X_{\frac{i}{n}}(t)}^2 \le \frac{C}{n} \,.
	\end{equation}
\end{lemma}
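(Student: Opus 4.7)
The plan is to couple the finite particle $X^n_i$ with its graphon counterpart $X_{i/n}$ by reusing the \emph{same} Brownian motion $B_{i/n}$ and the \emph{same} initial datum $X_{i/n}(0)$, so that $D^n_i(t) \defeq X^n_i(t) - X_{i/n}(t)$ satisfies an SDE with vanishing initial condition and stochastic integrand $\sigma(X^n_i) - \sigma(X_{i/n})$. Applying It\^o's formula to $\abs{D^n_i(t)}^2$, taking expectation to kill the martingale part, and using Condition~\ref{cd:coeffs}(3) for the quadratic variation reduces the problem to bounding
\begin{equation*}
\E \abs{D^n_i(t)}^2 \le C \int_0^t \E \abs{D^n_i(s)}^2 \, ds + 2 \int_0^t \E \bigl[ \abs{D^n_i(s)} \, \abs{\Delta^n_i(s)} \bigr] ds ,
\end{equation*}
where $\Delta^n_i(s)$ is the difference between the empirical drift of the finite system and the graphon drift evaluated at the mean-field state $X_{i/n}(s)$.

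The next step is to split $\Delta^n_i(s)$ into three telescoping pieces. The \emph{particle-mismatch} term $\frac{1}{n}\sum_j g^n_{ij}[b(X^n_i,X^n_j) - b(X_{i/n},X_{j/n})]$ is handled by the Lipschitz hypothesis on $b$ (Condition~\ref{cd:coeffs}(1)) and yields a contribution of the form $C(\abs{D^n_i} + \frac{1}{n}\sum_j \abs{D^n_j})$. The \emph{fluctuation} term, obtained by replacing $b(X_{i/n},X_{j/n})$ with its conditional mean $\int b(X_{i/n},y) \mu_{s,j/n}(dy)$, is a weighted sum of independent, centered, bounded random variables (using the mutual independence of $\{X_u\}_{u \in I}$ together with the assumption $b$ is bounded, Condition~\ref{cd:coeffs}(1)); its second moment is at most $n^{-1}\norm{b}_\infty^2$ after absorbing the weights $g^n_{ij} \in [0,1]$, with the diagonal term $j=i$ contributing a harmless $O(n^{-1})$ summand. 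The \emph{discretization} term $\frac{1}{n}\sum_j g^n_{ij} \int b(X_{i/n},y) \mu_{s,j/n}(dy) - \int_I G(i/n,v) \int b(X_{i/n},y) \mu_{s,v}(dy) \, dv$ is a Riemann-sum error of order $n^{-1}$, provided $v \mapsto G(i/n,v)$ is piecewise Lipschitz and $v \mapsto \mu_{s,v}$ is piecewise Wasserstein-Lipschitz on a compatible partition.

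Combining these bounds, maximizing over $i$, and applying Gronwall to the resulting inequality yields $\max_i \E\abs{D^n_i(t)}^2 \le C/n$ uniformly on $[0,T]$, which is the advertised estimate. The main obstacle is the discretization term: Condition~\ref{cd:graphon}(1) supplies the required piecewise regularity of $G$ and Condition~\ref{cd:init-data}(2) does the same for $\mu_{0,v}$, but one must propagate the Wasserstein-Lipschitz property of $v \mapsto \mu_{t,v}$ to positive times. I would do this by running a second synchronous coupling between the graphon particles $X_u$ and $X_{u'}$ for $u,u'$ in the same sub-interval $I_j$, then applying It\^o and Gronwall once more to obtain $\E \abs{X_u(t) - X_{u'}(t)}^2 \le C \abs{u-u'}^2$; this preserves the piecewise Lipschitz structure of the flow $v \mapsto \mu_{t,v}$ for every $t \in [0, T]$ and closes the Riemann-sum estimate.
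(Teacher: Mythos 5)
Your argument is correct, and it is essentially the proof of the cited result (the paper does not reprove this lemma but imports it as Theorem 3.2 of Bayraktar--Chakraborty--Wu, whose proof proceeds exactly by this synchronous coupling, the three-way splitting of the drift discrepancy into Lipschitz/fluctuation/Riemann-sum terms, and Gronwall). Your auxiliary step propagating the Wasserstein--Lipschitz regularity of $v \mapsto \mu_{t,v}$ to positive times is likewise the content of Theorem 2.1 of that reference, which the paper also invokes elsewhere for the same purpose.
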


\subsubsection*{Kernel Interpolation}
We introduce an \emph{HJK-kernel} adapted from~\cite{MaestraHoffmann2022}. 
Choose three functions $H \in C^1_c(\R)$, $J \in C^1_c(\R)$, $K \in C^1_c(\R^d)$ that are non-negative and normalized:
\begin{equation*}
	\int_\R H(t) dt = \int_{\R} J(u) du = \int_{\R^d} K(x) dx = 1 \,,
\end{equation*}
and have order (at least) 1:
\begin{equation*}
	\int_\R t H(t) dt = \int_\R u J(u) du = \int_{\R^d} x_i K(x) dx = 0 \,, \quad i = 1, \dots, d \,.
\end{equation*}
With the bandwidth vector $h = (h_1, h_2, h_3) \in \R_+^3$, the dilations are defined by 
\begin{equation*}
	H_{h_1}(t) = h_1^{-1} H(h_1^{-1} t) \,, \quad J_{h_2}(u) = h_2^{-1} J(h_2^{-1} u) \,, \quad K_{h_3}(x) = h_3^{-d} K(h_3^{-1} x) \,,
\end{equation*}
and the products are written as
\begin{equation*}
	(J \otimes K)_h (u,x) = J_{h_2}(u) K_{h_3}(x) \,, \qquad (H \otimes J \otimes K)_h(t,u,x) = H_{h_1}(t) J_{h_2}(u) K_{h_3}(x) \,.
\end{equation*}
Due to the use of floating bandwidth, we choose without loss of generality kernels $H, J, K$ supported in the closed unit ball (in the space where they are defined). 

With a given number $n$ of particles, we run the finite system $(X^n_i)_{i=1,\dots,n}$ over the time interval $[0, T]$. 
This gives us the empirical distribution 
\begin{equation*}
	\mu^n_t (du, dx) = \frac{1}{n} \sum_{i=1}^{n} \delta_{X^n_i(t)} (dx) \delta_{\frac{i}{n}} (du) \,, \qquad t \in [0, T] \,.
\end{equation*}
Using the JK part of the kernel to interpolate it gives a plug-in estimator of the density $\mu$: 
\begin{equation}
	\label{e:JK-density}
	\hat \mu^n_h (t, u, x) \defeq
		\JK_h \ast \mu^n_{t}(u, x) = 
		\frac{1}{n} \sum_{i=1}^{n} J_{h_2}(u - \frac{i}{n}) K_{h_3} (x - X^n_i(t))  
\end{equation}
for $t \in [0, T]$, $u \in I$, $x \in \R^d$. 

We also look at an auxiliary quantity $\pi: [0, T] \times I \times \R^d \to \R^d$ defined by
\begin{equation*}
	\pi(t, u, x) \defeq \beta(t, u, x) \mu(t, u, x) \,.
\end{equation*}
A discrete approximation is given by
\begin{equation*}
	\pi^n(dt, du, dx) \defeq \frac{1}{n} \sum_{i=1}^{n} \delta_{X^n_i(t)} (dx) \delta_{\frac{i}{n}} (du) d X^n_i(t) \,,
\end{equation*}
so for any test function $f$, we have 
\begin{equation*}
	\int_{[0, T] \times I \times \R^d} f(t, u, x) \pi^n(dt, du, dx) = \int_0^T \frac{1}{n} \sum_{i=1}^{n} f(t, \frac{i}{n}, X^n_i(t)) dX^n_i(t) 
\end{equation*}
as a stochastic integral. 
Using the HJK kernel to interpolate it gives a plug-in estimator of $\pi$:
\begin{align}
	\label{e:HJK-pi}
	\hat \pi^n_h (t, u, x) & \defeq \HJK_h \ast \pi^n(t, u, x) \\
	\nonumber
	& = \int_0^T \frac{1}{n} \sum_{i=1}^{n} H_{h_1}(t - s) J_{h_2} (u - \frac{i}{n}) K_{h_3} (x - X^n_i(s)) d X^n_i(s)  
\end{align}
for $t \in [0, T]$, $u \in I$, $x \in \R^d$. 
That leads to a plug-in estimator of $\beta$, 
\begin{equation}
	\label{e:HJK-beta}
	\hat \beta_{h,\kappa} \defeq \frac{\hat \pi^n_h}{\hat \mu^n_h \lor \kappa_2} \,,
\end{equation}
where $\kappa_2 > 0$ is a cut-off parameter to prevent the denominator from getting too large.

\subsubsection*{Deconvolution}

The deconvolution method is usually applied to obtain a function $f$ from the convolution $f \ast g$. 
We follow the ideas of~\cite{Johannes2009} and~\cite{MaestraHoffmann2022}. 
Here, we only present the definitions and estimators, while delaying the full intuitions to Appendix~\ref{s:intu-est}. 

To apply the Fourier transform on the index space $I = [0, 1]$, we consider the \emph{zero extension} to all measurable functions defined in $I$. 
With some abuse of notation, we let 
\begin{equation*}
	\mu(t,u,x) = \begin{cases}
		\mu(t,u,x) \,, & u \in I \\
		0 \,, & u \in \R \setminus I 
	\end{cases} \,, \qquad 
	\hat \mu^n_h (t,u,x) = \begin{cases}
		\hat \mu^n_h (t,u,x) \,, & u \in I \\
		0 \,, & u \in \R \setminus I 
	\end{cases} \,,
\end{equation*}
\begin{equation*}
	\pi (t,u,x) = \begin{cases}
		\pi (t,u,x) \,, & u \in I \\
		0 \,, & u \in \R \setminus I 
	\end{cases} \,, \qquad 
	\hat \pi^n_h (t,u,x) = \begin{cases}
		\hat \pi^n_h (t,u,x) \,, & u \in I \\
		0 \,, & u \in \R \setminus I 
	\end{cases} \,,
\end{equation*}
\begin{equation*}
	\beta (t,u,x) = \begin{cases}
		\beta (t,u,x) \,, & u \in I \\
		0 \,, & u \in \R \setminus I 
	\end{cases} \,, \qquad 
	\hat \beta^n_{h,\kappa} (t,u,x) = \begin{cases}
		\hat \beta^n_{h,\kappa} (t,u,x) \,, & u \in I \\
		0 \,, & u \in \R \setminus I 
	\end{cases} \,.
\end{equation*}
Then we define the Fourier transform of function $f$ supported on $I$ via 
\begin{equation*}
	\f_I f (w) = \int_I e^{-iwu}f(u) du = \int_\R e^{-iwu}f(u) du \,.
\end{equation*}
Note that we may view $\f_I$ as a linear operator on function-valued functions, and it admits an inverse transform on $L^2$-spaces.

In addition, we consider a linear operator $\lin_\phi$ on time-dependent functions, defined by 
\begin{equation*}
	\lin_\phi f = \int_0^T f(t) \phi(t) dt \,,
\end{equation*}
where $\phi \in L^\infty([0,T];\C)$ has compact support in $(0,T)$, such that $\int_0^T \phi(t) dt = 0$ (we denote this subspace of $L^\infty$ functions by $\dot{L}^\infty$). 
We write $\lin$ for $\lin_\phi$ when $\phi$ is fixed and the context has no ambiguity. 
The intuition of this operator is also explained in Appendix~\ref{s:intu-est}. 

\subsubsection*{Main estimator and its convergence}

Finally, with some additional cutoff parameters, we introduce our estimator of the graphon function
\begin{equation}
	\label{e:estGdef}
	\hat G^n_{\vartheta} (u_0, v_0) \defeq g_0 \cdot \frac{\norm{\f_{I}^{-1} \paren[\big]{\frac{\t \hat \beta^n_{h, \kappa, r}}{\t \hat \mu^n_{h,r}} \one{\{\abs{\t \hat \mu^n_{h,r}} > \kappa_1, \abs{w} \le \tilde r\}}} (u_0 - v_0)}_{L^2(\R^d)}}{\norm{\f_{I}^{-1} \paren[\big]{\frac{\t \hat \beta^n_{h, \kappa, r}}{\t \hat \mu^n_{h,r}} \one{\{\abs{\t \hat \mu^n_{h,r}} > \kappa_1, \abs{w} \le \tilde r\}}} (0)}_{L^2(\R^d)} \lor \kappa_0} \,,
\end{equation}
where $\t \defeq \f_I \f_{\R^d} \mathcal{L}_\phi$, and
\begin{equation*}
	\hat \mu^n_{h,r} \defeq \hat \mu^n_h \one{\{\abs{x} \le r\}} \,, \qquad \hat \beta^n_{h, \kappa, r} \defeq \hat \beta^n_{h,\kappa} \one{\{\abs{x} \le r\}}  \,.
\end{equation*}
We will explain the intuition of this estimator in Appendix~\ref{s:intu-est}.

For the estimate to converge, we need a further (relatively strong) assumption on some data intrinsic to the particle system.
\begin{assumption}
	\label{as:FLmu}
	Given Conditions~\ref{cd:coeffs}, \ref{cd:graphon}, and \ref{cd:init-data}, there exists $\phi \in \dot{L}^\infty([0, T]; \C)$, compactly supported in $(0,T)$, such that $\f_I \f_{\R^d} \lin_\phi \mu \neq 0$ almost everywhere on $\R \times \R^d$. 
\end{assumption}

\begin{theorem}[Main theorem]
	\label{t:main}
	Assume Conditions~\ref{cd:coeffs}, \ref{cd:graphon}, \ref{cd:init-data}, and Assumption~\ref{as:FLmu}. 
	Take $\phi$ given by Assumption~\ref{as:FLmu}.
	There exists a function $\upbnd$ in the number of particles $n$ and the parameters 
	\begin{equation*}
		\vartheta = (h_1, h_2, h_3, \kappa_0, \kappa_1, \kappa_2, r, \tilde r)
	\end{equation*}
	such that 
	\begin{equation}
		\max_{u_0, v_0 \in I} \E \abs{ \hat G^n_\vartheta (u_0, v_0) - G(u_0, v_0) }^2 \le \upbnd(n, \vartheta) \,,
	\end{equation}
	whenever $n, r, \tilde r$ are sufficiently large, $h_1, h_2, h_3, \kappa_1 > 0$ are sufficiently small, and $\kappa_0, \kappa_1, \kappa_2 > 0$ such that $\kappa_0 < g_0 \norm{F}_2$ and
	\begin{equation*}
		\kappa_2 < \inf\{\mu(t,u,x) \mid t \in \supp(\phi), u \in I, \abs{x} \le r\} \,.
	\end{equation*}  

	Moreover, there exists a sequence of choices of the parameters $(\vartheta_n)_{n \in \N}$ such that $\upbnd(n, \vartheta_n) \to 0$ as $n \to \infty$. 
\end{theorem}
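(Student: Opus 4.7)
The approach is to split the $L^2$-error via a deterministic idealization. Let $G^*_\vartheta(u_0,v_0)$ denote the quantity obtained from \eqref{e:estGdef} by replacing $\hat\mu^n_h$ and $\hat\beta^n_{h,\kappa}$ with their population counterparts $\mu$ and $\beta$, keeping the same cutoffs and truncations. Then
\begin{equation*}
\E|\hat G^n_\vartheta - G|^2 \le 2\,\E|\hat G^n_\vartheta - G^*_\vartheta|^2 + 2\,|G^*_\vartheta - G|^2,
\end{equation*}
and $\upbnd(n,\vartheta)$ will be constructed as the sum of a stochastic bound on the first term and a deterministic bound on the second. One then chooses $\vartheta_n$ so that both sequences vanish as $n\to\infty$.

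For the stochastic piece, the plan is to track how the plug-in errors $\hat\mu^n_h-\mu$ and $\hat\pi^n_h-\pi$, whose $L^2$-rates are supplied by Lemmas~\ref{l:JK-est-mu} and~\ref{l:HJK-est-pi}, propagate through the operators appearing in \eqref{e:estGdef}. The linear operators $\lin_\phi$, $\f_I$, and $\f_{\R^d}$ are $L^2$-bounded (Plancherel), and the spatial truncation $\one{\abs{x}\le r}$ and frequency truncation $\one{\abs{w}\le \tilde r}$ have operator norms computable in terms of $r$ and $\tilde r$. The ratio $\t\hat\beta^n_{h,\kappa,r}/\t\hat\mu^n_{h,r}$ is handled via the identity
\begin{equation*}
\frac{\hat a}{\hat b} - \frac{a}{b} = \frac{\hat a - a}{\hat b} - \frac{a}{b}\cdot\frac{\hat b - b}{\hat b},
\end{equation*}
using the floor provided by the cutoffs $\kappa_1,\kappa_2$. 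The random indicator $\one{\{\abs{\t\hat\mu^n_{h,r}}>\kappa_1\}}$ is compared to its deterministic analogue $\one{\{\abs{\t\mu_r}>\kappa_1\}}$ by splitting on the event $\{\norm{\t\hat\mu^n_{h,r}-\t\mu_r}_\infty \le \kappa_1/2\}$ and invoking Markov's inequality on the complement.

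For the deterministic piece, the McKean-Vlasov structure is decisive. Since $\int_0^T\phi(t)\,dt = 0$ and $V(x)$ is time-independent, the $V$-contribution to $\beta$ is annihilated by $\lin_\phi$, yielding for $u\in I$
\begin{equation*}
\lin_\phi\beta(u,x) = \int_I G(u,v)\,F\ast\lin_\phi\mu(v,x)\,dv.
\end{equation*}
Taking $\f_{\R^d}$ in $x$ turns the inner convolution into a product by $\f F(\xi)$, and taking $\f_I$ in $u$ with $G(u,v)=g(u-v)$ turns the outer integral into a product by $\f g(w)$ up to a boundary correction caused by the zero extension of $\mu$ and $\beta$ outside $I$. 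Consequently $\t\beta \approx \f F(\xi)\,\f g(w)\,\t\mu$; dividing by $\t\mu$ (which is almost-everywhere nonzero by Assumption~\ref{as:FLmu}), applying $\f_I^{-1}$, and taking $L^2(\R^d)$-norm in $\xi$ yields $\norm{F}_2\,g(u_0-v_0)$ modulo errors driven by the truncations. Condition~\ref{cd:graphon}(3) controls the frequency truncation error as $\tilde r\to\infty$, Proposition~\ref{p:mu-L2-bound} controls the $\abs{x}\le r$ truncation as $r\to\infty$, the Lipschitz condition on $g$ controls the boundary correction, and the cutoff on $\t\mu$ vanishes as $\kappa_1\to 0$ using Assumption~\ref{as:FLmu}. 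Normalizing by the value at $u_0-v_0=0$ and multiplying by $g_0$ recovers $g(u_0-v_0)=G(u_0,v_0)$.

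The main obstacle will be the division by the \emph{random} quantity $\t\hat\mu^n_{h,r}$ and the resulting nonsmooth dependence of the indicator $\one{\{\abs{\t\hat\mu^n_{h,r}}>\kappa_1\}}$ on the empirical data. The trick is to pair the convergence $\t\hat\mu^n_{h,r}\to\t\mu_r$ (in uniform norm, derived from the Fourier-side $L^2$-bound and a Sobolev argument, using that $|w|\le\tilde r$) with Assumption~\ref{as:FLmu} to deduce that the random cutoff is consistent with the deterministic one. The tension between shrinking $\kappa_1$ (needed for the deterministic error) and the resulting blow-up of constants in the stochastic error dictates the rate at which $\vartheta_n$ can be driven: one selects $h_1,h_2,h_3\to 0$, $r,\tilde r\to\infty$, $\kappa_0,\kappa_1\to 0$, and $\kappa_2=\kappa_2(r)\to 0$ at rates slow enough to be dominated by the $n^{-1}$-type rates from Lemma~\ref{l:conv-of-system} and the Bernstein concentration inequality used to prove Lemmas~\ref{l:JK-est-mu} and~\ref{l:HJK-est-pi}.
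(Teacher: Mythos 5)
Your proposal follows essentially the same route as the paper's proof: reduce to controlling $\E\abs{\hat A(u)-g(u)\norm{F}_2}^2$ via Minkowski and Parseval, exploit the identity $\t \beta = (\f_I g)(\f_{\R^d}F)\,\t\mu$ together with the $\kappa_1$-floor to split the ratio into a $\t\hat\beta-\t\beta$ term, a $\t\hat\mu-\t\mu$ term, and an indicator term, handle the indicator by a Chebyshev bound plus dominated convergence under Assumption~\ref{as:FLmu}, control the frequency tail by Condition~\ref{cd:graphon}(3) and the spatial truncation by Proposition~\ref{p:mu-L2-bound}, feed in the integrated $L^2$-rates of Corollaries~\ref{c:err-bound-mu-overall} and~\ref{c:err-bound-beta-overall}, and finally let the parameters tend to their limits slowly relative to $n$. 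The only notable deviation is that the paper compares $\one{\{\abs{\t\hat\mu^n_{h,r}}\le\kappa_1\}}$ with $\one{\{\abs{\t\mu}\le 2\kappa_1\}}$ pointwise in $(w,\xi)$, which avoids the uniform-norm/Sobolev step you anticipate needing.
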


\begin{remark}
	Notice that the bound $\upbnd(n, \vartheta)$ is uniform over all $u_0, v_0 \in I$. 
	Then the \emph{cut distance} from an estimator $\hat G$ to the true graphon function $G$ is bounded by 
	\begin{align*}
		d_\square (\hat G, G) & \defeq \sup_{J_1, J_2 \in \b(I)} \abs{ \int_{J_1 \times J_2} \hat G(u,v) - G(u,v) dudv } \\
		& \le \int_{I \times I} \abs{\hat G(u,v) - G(u,v)} dudv \,.
	\end{align*}
	This implies 
	\begin{equation*}
		\E \paren[\big]{ d_\square (\hat G^n_\vartheta, G) } \le \int_{I \times I} \E \abs{ \hat G^n_\vartheta (u, v) - G(u, v) } dudv \le \upbnd(n, \vartheta)^{1/2} \,.
	\end{equation*}
	Hence we also have the convergence of the estimator $\hat G^n_\vartheta$ in the cut metric.  
\end{remark}

\begin{remark}
	The Assumption~\ref{as:FLmu} is made for the purpose of dominated convergence, and this is standard in a variety of applications of the deconvolution method~\cite{Johannes2009}. 
	Yet it is nontrivial to verify, as it involves the distribution through the whole time interval.
	We discuss this further in Appendix~\ref{s:reduc-FLmu}. 
\end{remark}

\section{Convergence of estimators}\label{s:main}

\subsection{Error bounds for density and drift}\label{s:estimates}

We give estimates of the particle density $\mu$ and the intermediate quantity $\pi$ in the general setting. 
These ultimately contribute to the estimate of the graphon function. 
From this section onwards, we will keep using the asymptotic comparison symbol $\lesssim$, where $f \lesssim g$ means there exists some constant $c > 0$ such that $f \le cg$.
In addition, we write $f \lesssim_q g$ if $f \le cg$ for some constant $c$ depending on the quantity $q$ (e.g.\ time horizon $T$, dimension $d$). 

\subsubsection*{Estimates of particle density $\mu(t,u,x)$}
\begin{lemma}
	\label{l:JK-est-mu}
	Assume that Conditions~\ref{cd:coeffs}(1)(3), \ref{cd:graphon}(1), and~\ref{cd:init-data}(1)(2) hold. 
	For $t_0 \in (0,T)$, $u_0 \in (0,1)$, $x_0 \in \Omega$, we have
	\begin{align*}
		& \E \abs{\hat \mu^n_h (t_0, u_0, x_0) - \mu(t_0, u_0, x_0)}^2 \le \\
		& \qquad C ( n^{-2} h_3^{-2-2d} \norm{\grad K}_\infty^2 \sum_{i=1}^n J_{h_2}(u_0 - \frac{i}{n})^2 \\
		& \qquad + n^{-2} \sum_{i=1}^{n} J_{h_2} (u_0 - \frac{i}{n})^2 \norm{K_{h_3} (x_0 - \cdot)}^2_{L^2(\mu_{t_0, \frac{i}{n}})} + n^{-2} h_2^{-2} h_3^{-2d} \norm{J}_\infty^2 \norm{K}_\infty^2  \\
		& \qquad + n^{-2} h_3^{-2-2d} \norm{J}_2^2 \norm{\grad K}_\infty^2 + n^{-3} h_2^{-2} \norm{\grad J}_\infty^2 \sum_{i=1}^{n} \norm{K_{h_3}(x_0 - \cdot)}_{L^2(\mu_{t_0, \frac{i}{n}})}^2 )  \\
		& \qquad + \abs{\JK_h \ast \mu_{t_0} (u_0, x_0) - \mu(t_0, u_0, x_0)}^2  \,,
	\end{align*}
	where 
	\begin{equation*}
		\JK_h \ast \mu_{t_0} (u_0, x_0) = \int_I \int_{\R^d} \JK_h(u_0-u, x_0-x) \mu(t_0,u,x) dxdu \,.
	\end{equation*}
\end{lemma}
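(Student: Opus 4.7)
The plan is to follow the telescoping strategy already advertised in the introduction, namely to write
\[
\E\abs{\hat\mu^n_h(t_0,u_0,x_0) - \mu(t_0,u_0,x_0)}^2
\le 2\,\E\abs{\hat\mu^n_h - \bar\mu^n_h}^2 + 2\,\E\abs{\bar\mu^n_h - \mu}^2,
\]
where $\bar\mu^n_h(t_0,u_0,x_0)=\frac{1}{n}\sum_{i=1}^n J_{h_2}(u_0-\tfrac{i}{n})K_{h_3}(x_0-X_{i/n}(t_0))$ is the same kernel interpolant but built on the graphon mean-field particles $X_{i/n}$ rather than the finite-system particles $X^n_i$.

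For the first piece, the only source of discrepancy is the spatial argument of $K_{h_3}$. I would write
\[
\hat\mu^n_h - \bar\mu^n_h = \frac{1}{n}\sum_{i=1}^n J_{h_2}\!\paren[\big]{u_0-\tfrac{i}{n}}\bigl[K_{h_3}(x_0-X^n_i(t_0)) - K_{h_3}(x_0-X_{i/n}(t_0))\bigr],
\]
bound $K_{h_3}$ by Lipschitz continuity with constant $h_3^{-1-d}\|\grad K\|_\infty$, apply Cauchy--Schwarz to separate the $J$-weights from the particle errors, and then invoke Lemma~\ref{l:conv-of-system} to get $\E|X^n_i(t_0)-X_{i/n}(t_0)|^2\lesssim 1/n$. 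This produces the leading term $n^{-2}h_3^{-2-2d}\|\grad K\|_\infty^2 \sum_i J_{h_2}(u_0-\tfrac{i}{n})^2$.

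For the second piece I would use $\E|\bar\mu^n_h - \mu|^2 = \Var(\bar\mu^n_h) + (\E\bar\mu^n_h - \mu)^2$. Since the graphon mean-field particles $\{X_u\}_{u\in I}$ are independent, the variance splits termwise and each summand is bounded by $\|K_{h_3}(x_0-\cdot)\|_{L^2(\mu_{t_0,i/n})}^2/n^2$ weighted by $J_{h_2}(u_0-\tfrac{i}{n})^2$, yielding term (B). The expectation satisfies $\E\bar\mu^n_h = \frac{1}{n}\sum_i J_{h_2}(u_0-\tfrac{i}{n})\,(K_{h_3}\ast\mu_{t_0,i/n})(x_0)$, and I split the bias as
\[
\E\bar\mu^n_h - \mu(t_0,u_0,x_0) = \underbrace{\bigl(\E\bar\mu^n_h - \JK_h\ast\mu_{t_0}(u_0,x_0)\bigr)}_{\text{Riemann sum error}} + \underbrace{\bigl(\JK_h\ast\mu_{t_0}(u_0,x_0)-\mu(t_0,u_0,x_0)\bigr)}_{\text{convolution bias}}.
\]
The convolution bias is the last term of the stated inequality.

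The Riemann sum error, applied to $f(v)=J_{h_2}(u_0-v)(K_{h_3}\ast\mu_{t_0,v})(x_0)$, is the main technical obstacle and is what generates the ``intermediate'' terms (C), (D), (E). I would split $f(i/n)-f(v)$ by the product rule into a $J$-variation piece (controlled by $h_2^{-2}\|\grad J\|_\infty$, with $K_{h_3}\ast\mu$ bounded either by $h_3^{-d}\|K\|_\infty$ to produce the $\|J\|_\infty^2\|K\|_\infty^2$ term, or by $\|K_{h_3}(x_0-\cdot)\|_{L^2(\mu_{t_0,i/n})}$ via Cauchy--Schwarz to produce the $\|\grad J\|_\infty^2 \sum_i \|K_{h_3}(x_0-\cdot)\|_{L^2(\mu_{t_0,i/n})}^2$ term), together with a $\mu$-variation piece where $v\mapsto \mu_{t_0,v}$ is controlled by the $\was_2$-Lipschitz bound in Condition~\ref{cd:init-data}(2) propagated through the SDE by standard coupling, letting one transfer the regularity onto $K_{h_3}$ at the cost of $h_3^{-1-d}\|\grad K\|_\infty$ and producing term (D). A final consolidation via $(a+b)^2\le 2a^2+2b^2$ and summing only over the $\lesssim nh_2$ indices on which $J_{h_2}$ is supported yields exactly the listed bound. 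The delicate bookkeeping of which estimates to apply on which interval is the main place where one has to be careful, but no new ingredients beyond Conditions~\ref{cd:coeffs}(1)(3), \ref{cd:graphon}(1), \ref{cd:init-data}(1)(2) and Lemma~\ref{l:conv-of-system} are required.
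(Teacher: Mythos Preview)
Your decomposition is essentially the paper's four-term telescoping $M_1+M_2+M_3+M_4$, just organized as $(\hat\mu^n_h-\bar\mu^n_h)$ plus variance-plus-squared-bias of $\bar\mu^n_h$; since $M_2$ is centered and $M_3,M_4$ are deterministic, the two groupings coincide. The one genuine methodological difference is the concentration step: the paper bounds $M_2=\E\abs{\tfrac1n\sum_i Z_i}^2$ via Bernstein's inequality together with an integrated tail estimate (inequality~(48) of \cite{MaestraHoffmann2022}), which produces the pair of terms $n^{-2}\sum_i J_{h_2}^2\norm{K_{h_3}(x_0-\cdot)}_{L^2(\mu_{t_0,i/n})}^2$ and $n^{-2}h_2^{-2}h_3^{-2d}\norm{J}_\infty^2\norm{K}_\infty^2$, whereas you exploit the independence of the $X_{i/n}$ directly to write $\Var(\bar\mu^n_h)=n^{-2}\sum_i J_{h_2}^2\Var(K_{h_3}(x_0-X_{i/n}))$, yielding only the first of these. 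Your route is more elementary and in fact tighter; the lemma's stated inequality still follows since you are only dropping a nonnegative term from the right-hand side. Your attribution of the $\norm{J}_\infty^2\norm{K}_\infty^2$ term to the Riemann-sum error is therefore a mislabeling (in the paper it comes from Bernstein, not from $M_3$), but this does not affect the validity of your argument. The remaining pieces---the $M_1$ bound via Lipschitzness of $K_{h_3}$ and Lemma~\ref{l:conv-of-system}, the $M_3$ bound via the product-rule split and the $\was_2$-stability of $u\mapsto\mu_{t_0,u}$ (Theorem~2.1 of \cite{BayraktarChakrabortyWu2023}), and the untouched bias $M_4$---match the paper exactly.
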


Integrating the above pointwise errors, we have the following $L^2$-error on the estimator $\mu^n_h$. 
\begin{corollary}
	\label{c:err-bound-mu-overall}
	Assume the same hypothesis as in Lemma~\ref{l:JK-est-mu}.
	Fix a compact interval $[\tau_1, \tau_2] \subset (0,T)$.
	For $n \gg 1$, $h_2, h_3 \ll 1$, $r \gg 1$, we have 
	\begin{align*}
		& \int_{\tau_1}^{\tau_2} \int_I \int_{\R^d} \E \abs{\hat \mu^n_{h,r}(t, u, x) - \mu(t, u, x)}^2 dx du dt \le C ( \theta_{2,\mu}(r) +  \theta_{3,\mu}(h) + \\
		& \qquad r^d (n^{-2} h_3^{-2-2d} + n^{-2} h_2^{-2} h_3^{-2d}) + n^{-1} h_2^{-1}  h_3^{-d} + n^{-2} h_2^{-2} h_3^{-d} ) \,,
	\end{align*}
	where $C$ is a constant depending on ${T,d,b,J,K}$. 
        Here $\theta_{2,\mu}: \R_+ \to \R_+$ is a function such that $\theta_{2,\mu}(r) \to 0$ as $r \to \infty$, and
	$\theta_{3,\mu}: \R_+^3 \to \R_+$ is a function such that $\theta_{3,\mu}(h) \to 0$ as $h_2 + h_3 \to 0$. 
\end{corollary}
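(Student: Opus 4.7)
The plan is to integrate the pointwise bound of Lemma~\ref{l:JK-est-mu} over $[\tau_1,\tau_2] \times I \times B(0,r)$ term-by-term, while treating the far-field $\{|x|>r\}$ separately via Proposition~\ref{p:mu-L2-bound}. Concretely, I first split
\begin{equation*}
\hat \mu^n_{h,r}(t,u,x) - \mu(t,u,x) = \bigl(\hat\mu^n_h(t,u,x) - \mu(t,u,x)\bigr)\one{\{|x|\le r\}} - \mu(t,u,x)\one{\{|x|>r\}},
\end{equation*}
apply $(a+b)^2 \le 2a^2+2b^2$, and bound the tail piece by
\begin{equation*}
2\int_{\tau_1}^{\tau_2}\!\int_I\!\int_{\{|x|>r\}} \mu(t,u,x)^2\,dx\,du\,dt \;\eqdef\; \theta_{2,\mu}(r),
\end{equation*}
which tends to $0$ as $r\to\infty$ by Proposition~\ref{p:mu-L2-bound} (the density is uniformly $L^2$-integrable on $\{|x|>R\}$ for $R$ large, and by dominated convergence the tail vanishes as $r\to\infty$).

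Next I would integrate the five ``variance-type'' terms and the single ``bias'' term supplied by Lemma~\ref{l:JK-est-mu} against $dt\,du\,dx$ on $[\tau_1,\tau_2]\times I \times B(0,r)$. The two main computational tools are:
\begin{equation*}
\int_I \sum_{i=1}^n J_{h_2}\!\left(u - \tfrac{i}{n}\right)^{\!2} du \;\le\; \sum_{i=1}^n \|J_{h_2}\|_2^2 \;=\; n\,h_2^{-1}\|J\|_2^2,
\end{equation*}
and, by Fubini and $\int \mu_{t_0,i/n}(dy) = 1$,
\begin{equation*}
\int_{\R^d} \bigl\|K_{h_3}(x_0 - \cdot)\bigr\|_{L^2(\mu_{t_0, i/n})}^2 dx_0 \;=\; \|K_{h_3}\|_2^2 \;=\; h_3^{-d}\|K\|_2^2.
\end{equation*}
Terms that are constant in $(u,x)$ (the $\|J\|_\infty^2\|K\|_\infty^2$ and $\|J\|_2^2\|\grad K\|_\infty^2$ contributions) pick up a factor $|I|\cdot |B(0,r)| \lesssim r^d$; applying the two identities above to the remaining terms yields the $r^d\,n^{-2}h_3^{-2-2d}$, $r^d\,n^{-2}h_2^{-2}h_3^{-2d}$, $n^{-1}h_2^{-1}h_3^{-d}$, and $n^{-2}h_2^{-2}h_3^{-d}$ contributions, with any absolute constants absorbed into $C = C(T,d,b,J,K)$.

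It remains to handle the bias term
\begin{equation*}
\theta_{3,\mu}(h) \;\defeq\; \int_{\tau_1}^{\tau_2}\!\int_I\!\int_{\R^d} \bigl|\JK_h \ast \mu_{t}(u,x) - \mu(t,u,x)\bigr|^2 dx\,du\,dt,
\end{equation*}
and this is the part I expect to be the main obstacle. The convolution here acts only in $(u,x)$, so I would use the fact that $\mu(t,\cdot,\cdot) \in L^2(I\times\R^d)$ uniformly in $t\in[\tau_1,\tau_2]$ (which follows from Proposition~\ref{p:mu-L2-bound} together with the boundedness inherited from the Fokker-Planck equation on $(0,T)$), and then invoke the standard $L^2$-approximation property of mollifiers: for any $f \in L^2$, $\|(J\otimes K)_h \ast f - f\|_2 \to 0$ as $h_2+h_3 \to 0$. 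To promote this to an integral in $t$, I would dominate the integrand by $4 \|\mu_t\|_2^2 \in L^1([\tau_1,\tau_2])$ (using $\|(J\otimes K)_h \ast f\|_2 \le \|f\|_2$) and conclude via dominated convergence. Combining the four contributions with $\theta_{2,\mu}(r)$ and $\theta_{3,\mu}(h)$ yields the stated bound.

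The delicate point is the uniform-in-$t$ integrability of $\|\mu_t\|_{L^2(I\times\R^d)}$; this requires piecing together the $L^\infty$ bound of Proposition~\ref{p:mu-L2-bound} on bounded sets with the $L^2$ tail control, and in particular checking that the singular behavior as $t\downarrow 0$ does not interfere, which is precisely why the corollary restricts to $[\tau_1,\tau_2] \subset (0,T)$.
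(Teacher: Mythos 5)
Your proposal matches the paper's proof essentially step for step: the same near-field/far-field split defining $\theta_{2,\mu}(r)$ via Proposition~\ref{p:mu-L2-bound}, the same term-by-term integration of Lemma~\ref{l:JK-est-mu} using $\int_I\sum_i J_{h_2}(u-\tfrac{i}{n})^2\,du \le n h_2^{-1}\|J\|_2^2$ and the Fubini identity $\int_{\R^d}\|K_{h_3}(x-\cdot)\|^2_{L^2(\mu_{t,i/n})}\,dx = h_3^{-d}\|K\|_2^2$, and the same treatment of the bias term via $L^2$-continuity of translations plus dominated convergence in $t$ (the paper reaches this through Minkowski's integral inequality, which is the same mollifier argument you invoke). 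The delicate point you flag at the end — uniform boundedness of $\|\mu_{t,u}\|_2$ on $[\tau_1,\tau_2]\times I$ — is handled in the paper exactly as you suggest, by citing the local bounds from the Fokker--Planck theory on the compact time interval away from $t=0$.
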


\subsubsection*{Estimates of the drift term $\beta(t,u,x)$}
\begin{lemma}
	\label{l:HJK-est-pi}
	Assume Conditions~\ref{cd:coeffs}(1)(3)(4), \ref{cd:graphon}(1), \ref{cd:init-data}(1)(2), and that $b$ is bounded. 
	% Assume further that $b \in L^\infty$. 
	Then, for $t_0 \in (0,T)$, $u_0 \in (0,1)$, $x_0 \in \R^d$, we have 
	\begin{align*}
		& \E \abs{\pi^n_h (t_0, u_0, x_0) - \pi (t_0, u_0, x_0)}^2 \le C ( T d^2 \sigma_+^2 n^{-1} h_1^{-2} h_2^{-2} h_3^{-2d} \\
		& \quad + T n^{-1} h_1^{-1} h_2^{-2} h_3^{-2-2d} \norm{b}_\infty^2 \norm{H}_2^2 \norm{J}_\infty^2 \norm{\grad K}_\infty^2  \\
		& \qquad \quad + T n^{-2} h_1^{-1} h_3^{-2d} \sum_{i=1}^n J_{h_2}(u_0-\frac{i}{n})^2 \\
		& \quad + n^{-2} h_1^{-1} h_2^{-2} h_3^{-2d} T \norm{b}_\infty^2 \norm{H}_2^2 \norm{J}_\infty^2 \norm{K}_\infty^2 \\
		& \qquad \quad + T n^{-2} \norm{b}_\infty^2 \int_0^T H_{h_1}^2(t_0 - t) \sum_{i=1}^{n} J_{h_2}^2(u_0 - \frac{i}{n}) \norm{K_{h_3}(x_0 - \cdot)}_{L^2(\mu_{t, \frac{i}{n}})}^2 dt \\
		& \quad + T n^{-2} h_1^{-1} h_2^{-1} h_3^{-2-2d} \norm{b}_\infty^2 \norm{H}_2^2 \norm{J}_2^2 \norm{\grad K}_\infty^2 \\
		& \qquad \quad + T n^{-2} h_1^{-1} h_2^{-1} h_3^{-2d} \norm{H}_2^2 \norm{J}_2^2 \norm{K}_\infty^2 \\
		& \qquad \quad + T n^{-3} h_2^{-4} \norm{b}_\infty^2 \norm{\grad J}_\infty^2 \int_0^T H_{h_1}^2(t_0 - t) \sum_{i=1}^{n} \norm{K_{h_3}(x_0 - \cdot)}_{L^2(\mu_{t, \frac{i}{n}})}^2 dt ) \\
		& \quad + \abs{\HJK_h \ast \pi (t_0, u_0, x_0) - \pi(t_0, u_0, x_0)}^2  \,.
	\end{align*} 
\end{lemma}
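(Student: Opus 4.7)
The plan is to mirror the proof of Lemma~\ref{l:JK-est-mu} while accounting for the fact that $\hat\pi^n_h$ is built from stochastic integrals against $dX^n_i$. Following the telescoping scheme of~\eqref{e:telescoping}, I first introduce the graphon-system counterpart
\[
\bar\pi^n_h(t_0,u_0,x_0) \defeq \frac{1}{n}\sum_{i=1}^n \int_0^T H_{h_1}(t_0-s)\, J_{h_2}\bigl(u_0-\tfrac{i}{n}\bigr)\, K_{h_3}\bigl(x_0 - X_{i/n}(s)\bigr)\, dX_{i/n}(s),
\]
and split $\E\abs{\hat\pi^n_h - \pi}^2 \le 2\E\abs{\hat\pi^n_h - \bar\pi^n_h}^2 + 2\E\abs{\bar\pi^n_h - \pi}^2$.

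For the finite-versus-graphon piece $\E\abs{\hat\pi^n_h - \bar\pi^n_h}^2$, I substitute the two SDEs (which share the driving Brownian motion $B_{i/n}$) and decompose the integrand difference into $[K_{h_3}(x_0-X^n_i) - K_{h_3}(x_0-X_{i/n})]\,dX^n_i$, bounded through $\abs{K_{h_3}(a)-K_{h_3}(b)} \le h_3^{-d-1}\norm{\grad K}_\infty\abs{a-b}$ together with It\^o's isometry, plus $K_{h_3}(x_0-X_{i/n})(dX^n_i - dX_{i/n})$, controlled by Lipschitz continuity of $b$ and $\sigma$. Lemma~\ref{l:conv-of-system} then converts the pathwise differences into the required $n^{-1}$ scaling.

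For $\E\abs{\bar\pi^n_h - \pi}^2$ I perform a bias--variance split. Because each martingale $\int_0^T f(s,X_{i/n})\sigma(X_{i/n})\, dB_{i/n}$ has zero mean, the expectation reduces to
\[
\E\bar\pi^n_h(t_0,u_0,x_0) = \frac{1}{n}\sum_{i=1}^n \int_0^T\!\!\int_{\R^d} H_{h_1}(t_0-s)\, J_{h_2}\bigl(u_0-\tfrac{i}{n}\bigr)\, K_{h_3}(x_0-x)\, \pi\bigl(s, \tfrac{i}{n}, x\bigr)\, dx\, ds,
\]
which is a Riemann sum in $u$ for $\HJK_h \ast \pi(t_0,u_0,x_0)$. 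The Riemann-sum error, controlled by the piecewise Lipschitz regularity of $G$ and of $u \mapsto \mu_{\cdot,u}$, together with the kernel-smoothing error $\abs{\HJK_h\ast\pi - \pi}^2$, forms the bias. For the variance, independence of $\{X_{i/n}\}_i$ reduces $\mathrm{Var}(\bar\pi^n_h)$ to $n^{-2}\sum_i \mathrm{Var}(Z_i)$ where $Z_i$ is the $i$-th stochastic integral summand; on each $Z_i$ I split $dX_{i/n} = \beta\,ds + \sigma\,dB_{i/n}$, so that It\^o isometry with $\sigma\sigma^T \preceq \sigma_+^2 I$ on the diffusive half produces the leading term $T d \sigma_+^2 n^{-1} h_1^{-2} h_2^{-2} h_3^{-2d}$, while Cauchy--Schwarz on the drift half using boundedness of $b$ yields the remaining $n^{-2}$-scaled contributions. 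Several of these are deliberately retained as $L^2(\mu_{t,i/n})$-norms so that later integration over $(t_0,u_0,x_0)$ preserves integrability.

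The main obstacle is bookkeeping rather than any new analytic idea. All inputs (It\^o isometry, Lipschitz continuity of $b,\sigma,K$, and independence of $\{X_{i/n}\}$) are routine in isolation, but one must carefully track which kernel factors to dominate by sup-norms --- producing the explicit $h^{-k}$ powers --- and which to keep as weighted $L^2$-sums against $\mu$. Any premature application of $\norm{K_{h_3}}_\infty$ where a factor of $K_{h_3}^2$ should be retained would collapse an $n^{-2}$ term into an $n^{-1}$ one and spoil the subsequent integration needed for the deconvolution estimator $\hat G^n_\vartheta$ to converge.
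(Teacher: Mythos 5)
Your proposal is correct and uses the same toolbox as the paper (Lemma~\ref{l:conv-of-system}, It\^o's isometry against the independent Brownian motions $B_{i/n}$, independence of the graphon particles, and the Riemann-sum-plus-kernel-bias analysis), but the bookkeeping is organized differently. The paper first splits $dX^n_i = Y^n_i\,dt + \sigma(X^n_i)\,dB_{i/n}$ and keeps the entire martingale contribution of the \emph{finite} system as a standalone term $P_5$, bounded in one stroke by It\^o's isometry; only the $dt$-part is then telescoped through the graphon system ($P_1$--$P_4$). You instead telescope the full stochastic integrals via an intermediate $\bar\pi^n_h$ built from $dX_{i/n}$, which forces an additional It\^o-isometry estimate on the difference of martingale integrands $K_{h_3}(x_0-X^n_i)\sigma(X^n_i)-K_{h_3}(x_0-X_{i/n})\sigma(X_{i/n})$; this works (the resulting terms are of order $n^{-2}h_1^{-1}h_2^{-1}h_3^{-2-2d}$ and $n^{-2}h_1^{-1}h_2^{-1}h_3^{-2d}$, already present in the stated bound) but adds terms the paper avoids. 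The other difference is the concentration step: the paper runs $\E\abs{n^{-1}\sum_i Z_i(t)}^2$ through Bernstein's inequality and inequality (48) of~\cite{MaestraHoffmann2022}, producing the maximum of the variance term and an $n^{-2}\norm{\JK_h}_\infty^2\norm{b}_\infty^2$ term, whereas you compute the variance directly from independence as $n^{-2}\sum_i\E Z_i^2$ --- cleaner here, and dominated by the paper's bound; the Bernstein route only becomes essential later, in Lemma~\ref{l:improve-est-mu}, where the change of measure raises the tail probability to a power $a$. Your closing caution about not prematurely replacing $K_{h_3}^2$ by $\norm{K_{h_3}}_\infty^2$ in the weighted $L^2(\mu_{t,i/n})$ sums is exactly the point the paper is protecting in its statement of the lemma.
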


Recall that $\beta^n_{h,\kappa} = \frac{\pi^n_h}{\mu^n_h \lor \kappa_2}$. 
Integrating the above pointwise errors and using Corollary~\ref{c:err-bound-mu-overall}, we have the following $L^2$-error on the estimator $\beta^n_{h,\kappa}$. 
\begin{corollary}
	\label{c:err-bound-beta-overall}
	Assume the same hypothesis as in Lemma~\ref{l:HJK-est-pi}.
	Fix a compact interval $[\tau_1, \tau_2] \subset (0,T)$.
	For $n \gg 1$, $h_1, h_2, h_3 \ll 1$, $r \gg 1$, and $0 < \kappa_2 < \inf \{ \mu(t,u,x) \st t \in [\tau_1,\tau_2], u \in I, \abs{x} \le r \}$, we have 
	\begin{align*}
		& \int_{\tau_1}^{\tau_2} \int_{I} \int_{\R^d} \E \abs{\hat \beta^n_{h,\kappa,r} (t, u, x) - \beta (t, u, x)}^2 dx du dt \le \\
		& \qquad  C ( \kappa_2^{-2} \paren[\big]{ n^{-1} h_1^{-1} h_2^{-1} h_3^{-d} + n^{-2} h_1^{-1} h_2^{-4} h_3^{-d} } \\
		& \quad + \kappa_2^{-2} r^{d} (n^{-1} h_1^{-1} h_2^{-2} h_3^{-2-2d} +  n^{-1} h_1^{-2} h_2^{-2} h_3^{-2d}) \\
		& \quad + \kappa_2^{-2} (\theta_{3,\mu}(h) + \theta_{3,\pi}(h)) + \theta_{2, \beta}(r) ) \,,
	\end{align*}
    where the constant $C$ depends on ${T,d,b,\sigma,H,J,K}$.
	Here $\theta_{2, \beta}: \R_+ \to \R_+$ is some function such that $\theta_{2, \beta}(r) \to 0$ as $r \to \infty$,
	and $\theta_{3, \pi}: \R_+^3 \to \R_+$ is some function such that $\theta_{3, \pi}(h) \to 0$ as $h_1 + h_2 + h_3 \to 0$. 
\end{corollary}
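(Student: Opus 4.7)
The plan is to reduce this $L^2$ bound on the quotient estimator $\hat\beta^n_{h,\kappa} = \hat\pi^n_h/(\hat\mu^n_h\lor\kappa_2)$ to the already-proved pointwise bound of Lemma~\ref{l:HJK-est-pi} and the integrated bound of Corollary~\ref{c:err-bound-mu-overall}. First I would separate the spatial truncation:
\begin{equation*}
\abs{\hat\beta^n_{h,\kappa,r}-\beta}^2 \le 2\,\abs{\hat\beta^n_{h,\kappa}-\beta}^2\one{\{\abs{x}\le r\}} + 2\,\abs{\beta}^2\one{\{\abs{x}>r\}} \,.
\end{equation*}
Since Condition~\ref{cd:coeffs}(2) gives $F,V\in L^2(\R^d)$, the identity $\beta(t,u,x) = V(x) + \int_I G(u,v)(F\ast\mu_{t,v})(x)\,dv$ combined with Young's inequality shows $\beta(t,u,\cdot)\in L^2(\R^d)$ with norm uniformly bounded in $(t,u)\in[0,T]\times I$. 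Hence the tail integral $\theta_{2,\beta}(r) \defeq \int_{\tau_1}^{\tau_2}\int_I\int_{\abs{x}>r}\abs{\beta}^2\,dx\,du\,dt \to 0$ as $r\to\infty$, producing the last summand of the stated bound.

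\textbf{Quotient decomposition and integration.} Using $\pi = \beta\mu$ and adding and subtracting $\pi/(\hat\mu^n_h\lor\kappa_2)$,
\begin{equation*}
\hat\beta^n_{h,\kappa}-\beta = \frac{\hat\pi^n_h-\pi}{\hat\mu^n_h\lor\kappa_2} + \beta\cdot\frac{\mu-(\hat\mu^n_h\lor\kappa_2)}{\hat\mu^n_h\lor\kappa_2} \,.
\end{equation*}
On $\{\abs{x}\le r\}$ the hypothesis $\kappa_2<\mu$ yields $\abs{\mu-\hat\mu^n_h\lor\kappa_2}\le\abs{\mu-\hat\mu^n_h}$ by the case check (if $\hat\mu^n_h\ge\kappa_2$ equality holds, otherwise $\kappa_2-\mu\ge\hat\mu^n_h-\mu$), and $\norm{\beta}_\infty\le\norm{b}_\infty$ because $b$ is bounded. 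Thus
\begin{equation*}
\abs{\hat\beta^n_{h,\kappa}-\beta}^2\one{\{\abs{x}\le r\}} \lesssim \kappa_2^{-2}\bigl(\abs{\hat\pi^n_h-\pi}^2+\abs{\hat\mu^n_h-\mu}^2\bigr)\one{\{\abs{x}\le r\}} \,.
\end{equation*}
Taking expectation and integrating the bound of Lemma~\ref{l:HJK-est-pi} term by term over $[\tau_1,\tau_2]\times I\times\{\abs{x}\le r\}$, the identities
\begin{equation*}
\int_I\sum_{i=1}^n J_{h_2}(u_0-i/n)^2\,du_0\lesssim nh_2^{-1}\norm{J}_2^2\,, \qquad \int_{\R^d}\norm{K_{h_3}(x_0-\cdot)}_{L^2(\mu_{t,i/n})}^2\,dx_0\le h_3^{-d}\norm{K}_2^2
\end{equation*}
produce the stochastic summands $\kappa_2^{-2}(n^{-1}h_1^{-1}h_2^{-1}h_3^{-d}+n^{-2}h_1^{-1}h_2^{-4}h_3^{-d})$, whereas the $\norm{K}_\infty^2$ and $\norm{\grad K}_\infty^2$ contributions carry no $L^2(\mu)$-averaging in $x$ and therefore acquire the factor $r^d$ after integration on $\{\abs{x}\le r\}$, yielding the $\kappa_2^{-2}r^d(\cdots)$ term. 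The bias $\abs{\HJK_h\ast\pi-\pi}^2$ integrated on the compact region defines $\theta_{3,\pi}(h)\to 0$ as $h\to 0$, since $\pi=\beta\mu\in L^2([\tau_1,\tau_2]\times I\times\R^d)$ by Proposition~\ref{p:mu-L2-bound} and the boundedness of $\beta$. The remaining $\abs{\hat\mu^n_h-\mu}^2$ contribution is handled by Corollary~\ref{c:err-bound-mu-overall}: its stochastic terms are of smaller or equal order than those just obtained for $\pi$ (they lack the $h_1^{-1}$ coming from the $H$-kernel) and are absorbed into the same summands, while its bias contributes $\kappa_2^{-2}\theta_{3,\mu}(h)$.

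\textbf{Main obstacle.} No new analytic ingredient is needed beyond the two results invoked; the principal difficulty lies in the bookkeeping of the last step. One must verify that each of the nine terms in the pointwise bound of Lemma~\ref{l:HJK-est-pi}, after the $J$- and $K$-sum/integral simplifications above, either matches one of the four stochastic summands quoted in the statement or is dominated by them when $n\gg 1$ and $0<h\ll 1$. Keeping track of which terms acquire a factor of $r^d$ --- precisely those without an $L^2(\mu)$-weighted $K$-factor --- and which remain $r$-free is the main place where care is required; the rest of the proof is routine algebra combined with the tail estimate for $\beta$.
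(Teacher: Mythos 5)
Your proposal is correct and follows essentially the same route as the paper: split off the tail $\{\abs{x}>r\}$ via the uniform $L^2$-bound on $\beta$ coming from $F,V\in L^2$, bound the quotient on $\{\abs{x}\le r\}$ by $\kappa_2^{-2}(\abs{\hat\pi^n_h-\pi}^2+\norm{b}_\infty^2\abs{\hat\mu^n_h-\mu}^2)$, and then integrate Lemma~\ref{l:HJK-est-pi} and Corollary~\ref{c:err-bound-mu-overall} term by term, with the $r^d$ factor attaching exactly to the summands lacking an $L^2(\mu)$-weighted $K$-norm. Your explicit case-check that $\abs{\mu-(\hat\mu^n_h\lor\kappa_2)}\le\abs{\mu-\hat\mu^n_h}$ when $\kappa_2<\mu$ is a small detail the paper leaves implicit, but the argument is otherwise the same.
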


Although we postponed the proofs to Section~\ref{s:estimates-proof}, we now justify the main result (Theorem~\ref{t:main}).

\subsection{Proof of main theorem}

\begin{proof}[Proof of Theorem~\ref{t:main}]
	\step[]
	For simplicity, we usually abbreviate $\hat G^n_\vartheta$ as $\hat G$, and similarly for $\hat \mu$ and $\hat \beta$. 

	Fix arbitrary $u_0, v_0 \in I$. 
	Write $\hat G$ as 
	\begin{equation*}
		\hat G (u_0, v_0) = g_0 \cdot \frac{\hat A(u_0 - v_0)}{\hat A(0) \lor \kappa_0} 
	\end{equation*}
	where
	\begin{equation*}
		\hat A(u) \defeq \norm{\f_{I}^{-1} \paren[\big]{\frac{\t \hat \beta^n_{h, \kappa, r}}{\t \hat \mu^n_{h,r}} \one{\{\abs{\t \hat \mu^n_{h,r}} > \kappa_1, \abs{w} \le \tilde r\}}} (u)}_{L^2(\R^d)} \,.
	\end{equation*}
	Also let $A(u) = g(u) \norm{F}_2$. 
	When $\kappa_0 < A(0)$, we have 
	\begin{align*}
		\abs{\hat G (u_0, v_0) - G(u_0, v_0) }^2& = g_0^2 \abs{ \frac{A(u_0 - v_0) (\hat A(0) - A(0))}{A(0) (\hat A(0) \lor \kappa_0)} + \frac{A(u_0 - v_0) - \hat A(u_0 - v_0)}{\hat A(0) \lor \kappa_0} }^2 \\
		& \lesssim \frac{G(u_0, v_0)^2 \abs{\hat A(0) \lor \kappa_0  - A(0)}^2}{(\hat A(0) \lor \kappa_0)^2 } + \frac{\abs{A(u_0 - v_0) - \hat A(u_0 - v_0)}^2}{(\hat A(0) \lor \kappa_0)^2} \\
		& \lesssim \kappa_0^{-2} \left( \abs{\hat A(0) - A(0)}^2 + \abs{A(u_0 - v_0) - \hat A(u_0 - v_0)}^2 \right) \,.
	\end{align*}
	So it suffices to bound the expressions $\E \abs{\hat A(u) - A(u)}^2$, and we will do that in the following steps.  

	\step[]
	By Minkowski's inequality, we have for each $u \in \R$ that
	\begin{align*}
		& \abs{ \norm{\f_{I}^{-1} \paren[\big]{\frac{\t \hat \beta^n_{h, \kappa, r}}{\t \hat \mu^n_{h,r}} \one{\{\abs{\t \hat \mu^n_{h,r}} > \kappa_1, \abs{w} \le \tilde r\}}} (u)}_{L^2(\R^d)} - g(u) \norm{F}_{L^2(\R^d)} } \\
		& \le \norm{\f_{I}^{-1} \paren[\big]{\frac{\t \hat \beta^n_{h, \kappa, r}}{\t \hat \mu^n_{h,r}} \one{\{\abs{\t \hat \mu^n_{h,r}} > \kappa_1, \abs{w} \le \tilde r\}}} (u)  - g(u) \f_{\R^d} F}_{L^2(\R^d)} \\
		& = \norm{\f_{I}^{-1} \left( \paren[\big]{\frac{\t \hat \beta^n_{h, \kappa, r}}{\t \hat \mu^n_{h,r}} \one{\{\abs{\t \hat \mu^n_{h,r}} > \kappa_1, \abs{w} \le \tilde r\}}  - \f_I g \f_{\R^d} F   } \right) (u)     }_{L^2(\R^d)} \,.
	\end{align*}
	Then
	\begin{align*}
		\E \abs{\hat A(u) - A(u)}^2 & \lesssim \E  \norm{ \f_I^{-1} \paren[\big]{\frac{\t \hat \beta^n_{h, \kappa, r}}{\t \hat \mu^n_{h,r}} \one{\{\abs{\t \hat \mu^n_{h,r}} > \kappa_1, \abs{w} \le \tilde r\}}  - \f_I g \f_{\R^d} F   } (u)  }_{L^2(\R^d)}^2 \\
		& \lesssim \tilde r^2 \E \int_{\abs{w} \le \tilde r} \norm{ \paren[\big]{\frac{\t \hat \beta^n_{h, \kappa, r}}{\t \hat \mu^n_{h,r}} \one{\{\abs{\t \hat \mu^n_{h,r}} > \kappa_1\}}  - \f_I g \f_{\R^d} F   } (w)  }_{L^2(\R^d)}^2 dw \\
		& \qquad \qquad + \norm{ \f_{\R^d} F \int_{\abs{w}>\tilde r} e^{iuw}  \f_I g (w) dw }_{L^2(\R^d)}^2 \,.
	\end{align*}
	For the second term, we observe by Parseval's identity that 
	\begin{equation*}
		\norm{\f_I^{-1} (\f_I g \f_{\R^d} F) (u)}_{L^2(\R^d)} = \abs{g(u)} \norm{F}_2 \le \norm{F}_2 < \infty \,,
	\end{equation*}
	so that 
	\begin{equation*}
		\norm{ \f_{\R^d} F \int_{\abs{w}>\tilde r} e^{iuw}  \f_I g(w) dw }_{L^2(\R^d)}^2 \le \norm{F}_2^2 \left( \int_{\abs{w} > \tilde r} \abs{\f_I g (w)} dw \right)^2 \to 0
	\end{equation*}
	as $\tilde r \to \infty$ due to Condition~\ref{cd:graphon}(3), at some rate $\tilde \theta(\tilde r)$ independent of $u$. 

	\step[]
	Now we look at the first term under Assumption~\ref{as:FLmu},
	\begin{equation*}
		\tilde r^2 \E \int_{\R} \int_{\R^d} \abs{ \paren[\big]{\frac{\t \hat \beta^n_{h, \kappa, r}}{\t \hat \mu^n_{h,r}} \one{\{\abs{\t \hat \mu^n_{h,r}} > \kappa_1, \abs{w} \le \tilde r\}}  - \frac{\t \beta}{\t \mu}   } (w, \xi)  }^2 d \xi d w
	\end{equation*}
	Notice that $\t \beta = (\f_I g) (\f_{\R^d} F) (\t \mu)$.
	Following~\cite{MaestraHoffmann2022}, we split the integrand by 
	\begin{equation*}
		\frac{\t \hat \beta}{\t \hat \mu} - \f_I g \f_{\R^d} F = \frac{(\t \hat \beta - \t \beta) + (\f_I g)(\f_{\R^d} F)(\t \mu - \t \hat \mu)}{\t \hat \mu} \,,
	\end{equation*}
	when the division is well-defined. 
	\iffalse 
	\begin{align*}
		\frac{\t \hat \beta}{\t \hat \mu} - \frac{\t \beta}{\t \mu} & = \frac{(\t \hat \beta - \t \beta) \overline{\t \hat \mu}}{\abs{\t \hat \mu}^2} + \frac{\t \beta \overline{\t \hat \mu}}{\abs{\t \hat \mu}^2} - \frac{\t \beta}{\t \mu} \\
		& = \frac{(\t \hat \beta - \t \beta) \overline{\t \hat \mu}}{\abs{\t \hat \mu}^2} +  \frac{\t \beta \overline{\t \hat \mu}}{{\t \mu}} \frac{\t \mu - \t \hat \mu}{\abs{\t \hat \mu}^2} \,.
	\end{align*}
	\fi 
	Then 
	\begin{align*}
		& \abs{ \frac{\t \hat \beta}{\t \hat \mu} \one{\{\abs{\t \hat \mu} > \kappa_1, \abs{w} \le \tilde r\}} - \f_I g \f_{\R^d} F }^2  \\
		& \quad \lesssim \kappa_1^{-2} \abs{\t \hat \beta - \t \beta}^2 
		+ \kappa_1^{-2} \abs{\f g}^2 \abs{\f F}^2 \abs{\t \mu - \t \hat \mu}^2 
		+ \abs{\f_{I} g \f_{\R^d} F (\one{\{\abs{\t \hat \mu} \le \kappa_1\}} + \one{\{\abs{w} > \tilde r\}})}^2 \\
		& \quad =: \a_1 + \a_2 + \a_3 \,.
	\end{align*}
	
	For~$\a_1$, the Parseval's identity gives
	\begin{align*}
		\int_{\R} \int_{\R^d} \kappa_1^{-2} \abs{\t \hat \beta - \t \beta}^2 (w, \xi) d \xi d w & = \kappa_1^{-2} \int_{\R} \int_{\R^d} \abs{\lin_\phi \hat \beta (u, x) - \lin_\phi \beta (u, x)}^2 dx du \\
		& \le \kappa_1^{-2} \norm{\phi}_2^2 \int_{\tau_1}^{\tau_2} \int_{I} \int_{\R^d} \abs{\hat \beta (t, u, x) - \beta (t, u, x)}^2 dx du dt \,,
	\end{align*}
	where $\supp(\phi) \subset [\tau_1, \tau_2]$.
	From Corollary~\ref{c:err-bound-beta-overall} we get 
	\begin{align*}
		& \E \int_{\R} \int_{\R^d} \kappa_1^{-2} \abs{\t \hat \beta (w, \xi) - \t \beta (w, \xi)}^2 d \xi d w \lesssim_{T,d,b,\sigma,H,J,K,\phi} \\
		& \qquad \qquad \kappa_1^{-2} \kappa_2^{-2} \paren[\big]{ n^{-1} h_1^{-1} h_2^{-1} h_3^{-d} + n^{-2} h_1^{-1} h_2^{-4} h_3^{-d} } \\
		& \qquad \quad + \kappa_1^{-2} \kappa_2^{-2} r^{d} (n^{-1} h_1^{-1} h_2^{-2} h_3^{-2-2d} + n^{-1} h_1^{-2} h_2^{-2} h_3^{-2d}) \\
		& \qquad \quad + \kappa_1^{-2} \kappa_2^{-2} (\theta_{3,\mu}(h) + \theta_{3,\pi}(h)) + \kappa_1^{-2} \theta_{2, \beta}(r) \,.
	\end{align*}
	
	For~$\a_2$, similarly we have 
	\begin{align*}
		\int_{\R} \int_{\R^d} \abs{\t \hat \mu - \t \mu}^2 (w, \xi) d \xi d w \le  \norm{\phi}_2^2 \int_0^T \int_{\R} \int_{\R^d} \abs{\hat \mu (t, u, x) - \mu (t, u, x)}^2 dx du dt \,.
	\end{align*}
	Also, $\abs{\f g} \le \norm{g}_1 \le 2$, and $\abs{\f F} \le \norm{F}_1 < \infty$. 
	Along with Corollary~\ref{c:err-bound-mu-overall} we have that 
	\begin{align*}
		& \E \int_{\R} \int_{\R^d} \kappa_1^{-2} \abs{\f g}^2 \abs{\f F}^2 \abs{\t \mu - \t \hat \mu}^2 d \xi d w \lesssim_{T,d,b,\sigma,J,K,\phi} \\
		& \quad \qquad \kappa_1^{-2} (\theta_{2,\mu}(r) +  \theta_{3,\mu}(h))  \\
		& \quad \quad + \kappa_1^{-2} r^d (n^{-2} h_3^{-2-2d} + n^{-2} h_2^{-2} h_3^{-2d}) + \kappa_1^{-2} ( n^{-1} h_2^{-1}  h_3^{-d} + n^{-2} h_2^{-2} h_3^{-d} ) \,,
	\end{align*}
	
	For $\a_3$, we first observe that 
	\begin{align*}
		& \E \abs{ \f_{I} g(w) \f_{\R^d} F(\xi) \one{\{\abs{\t \hat \mu} \le \kappa_1\}}(w, \xi) }^2 \\
		& \le \abs{\f_{I} g(w) \f_{\R^d} F(\xi) }^2 \E \left[ \one{\{\abs{\t \hat \mu - \t \mu} \ge \kappa_1\}} + \one{\{\abs{\t \mu} \le 2 \kappa_1\}} \right] \\
		& \le \abs{\f_{I} g(w) \f_{\R^d} F(\xi) }^2 \left( \kappa_1^{-2} \E \abs{\t (\hat \mu - \mu) (w, \xi)}^2 + \one{\{\abs{\t \mu} \le 2 \kappa_1\}}(w, \xi) \right) \,.
	\end{align*}
	Integrating the first part gives 
	\begin{align*}
		& \int_{\R} \int_{\R^d} \abs{\f g(w) \f F(\xi) }^2 \kappa_1^{-2} \E \abs{\t (\hat \mu - \mu) (w, \xi)}^2 d\xi dw \\
		& \le \kappa_1^{-2} \norm{g}_1^2 \norm{F}_1^2 \int_{\R} \int_{\R^d} \E \abs{\t (\hat \mu - \mu) (w, \xi)}^2 d\xi dw \\
		& \lesssim_{g, b, \phi} \kappa_1^{-2} \int_0^T \int_\R \int_{\R^d} \E \abs{\hat \mu^n_{h,r}(t, u, x) - \mu(t, u, x)}^2 dx du dt \,,
	\end{align*}
	which can be bounded in the same way as $\a_2$ using Corollary~\ref{c:err-bound-mu-overall}. 
	
	Integrating for the second part, we get 
	\begin{equation}
		\int_{\R} \int_{\R^d}  \abs{\f_{I} g(w) \f_{\R^d} F(\xi) }^2 \one{\{\abs{\t \mu} \le 2 \kappa_1\}} (w, \xi) d\xi dw \,.
	\end{equation}
	Under Assumption~\ref{as:FLmu}, we apply dominated convergence to see that this quantity goes to 0 as $\kappa_1 \to 0$.

	In addition,
	\begin{align*}
		& \int_0^T \int_\R \int_{\R^d} \E \abs{ \f_{I} g(w) \f_{\R^d} F(\xi) \one{\{\abs{w} > \tilde r\}}(w, \xi) }^2 \\
		& \le \int_0^T \int_\R \int_{\R^d} \abs{\f_I g(w) \one{\{\abs{w} > \tilde r\}}}^2 \abs{\f_{\R^d} F(\xi)}^2 \\
		& = T \norm{F}_2^2 \int_{\{\abs{w} > \tilde r\}} \abs{\f_I g(w)}^2 \,.
	\end{align*}
	Condition~\ref{cd:graphon}(3) guarantees that it converges to 0 faster than $\tilde r^{-2}$ as $\tilde r \to \infty$. 
	We denote the total convergence rate of $\a_3$ by $\theta_1 (\tilde r, \kappa_1)$. 

	To summarize, we define 
	\begin{align*}
		\upbnd(n, \vartheta) & = C ( \kappa_0^{-2} \tilde r^2 \tilde \theta(\tilde r) \\
		& + \tilde r^2 \kappa_0^{-2} \theta_1(\tilde r, \kappa_1) \\ 
		& + \tilde r^2 \kappa_0^{-2} \kappa_1^{-2} (\theta_{2, \beta}(r) + \kappa_2^{-2} (\theta_{2, \mu}(r) + \theta_{3, \mu}(h) + \theta_{3, \pi}(h))) \\
		& + \tilde r^2 \kappa_0^{-2} \kappa_1^{-2} \kappa_2^{-2} (n^{-1} h_1^{-1} h_2^{-1} h_3^{-d} + n^{-2} h_1^{-1} h_2^{-4} h_3^{-d}) \\
		& + \tilde r^2 \kappa_0^{-2} \kappa_1^{-2} \kappa_2^{-2} r^d (n^{-1} h_1^{-1} h_2^{-2} h_3^{-2-2d} + n^{-1} h_1^{-2} h_2^{-2} h_3^{-2d}) ) \,.
	\end{align*}
	Here the constant $C$ depends only on $T,d,b,\sigma,H,J,K$, which are fixed for the model. 
	The upper bound is independent of $u_0, v_0$, so we obtain the uniform bound presented in the theorem. 

	We can fix $\kappa_0$ and $\phi$. 
	Let $\vartheta_n = (h_1^{(n)}, h_2^{(n)}, h_3^{(n)}, \kappa_0, \kappa_1^{(n)}, \kappa_2(r^{(n)}), r^{(n)}, \tilde r^{(n)})$,
	where $r^{(n)}, \tilde r^{(n)} \to \infty$ slowly enough as $n \to \infty$, 
	$\kappa_2(r) = \frac{1}{2} \inf \{\mu(t,u,x) \st t \in \supp(\phi), u \in I, \abs{x} \le r\}$,
	and $\kappa_1^{(n)}, h_1^{(n)}, h_2^{(n)}, h_3^{(n)} \to 0$ accordingly. 
	We may guarantee that the quantities $\theta_1$, $\theta_{2, \beta}$, $\theta_{2, \mu}$, $\theta_{3, \mu}$, $\theta_{3, \pi}$ all converge to 0. 
	Then $\upbnd(n, \vartheta_n) \to 0$ as $n \to \infty$, finishing the proof. 
\end{proof}

\section{Minimax analysis on plug-in estimators}\label{s:minimax}

In Section~\ref{s:estimates} we presented upper bounds for the estimation error $\E \abs{\hat \mu^n_h - \mu}^2$ and $\E \abs{\hat \pi^n_h - \pi}^2$.
Those are similar to those given in~\cite{BolleyGuillinVillani2007} and are not tight, though convergent. 
However, the estimators themselves are indeed optimal whenever the parameters $h_1$, $h_2$, and $h_3$ are properly chosen.
In this section, we conduct a minimax analysis to study both the upper bounds and lower bounds of the estimation errors to witness the optimality. 

We first look at an improved upper bound on the error of the plug-in estimator $\hat \mu^n_h$. 
\begin{lemma}
	\label{l:improve-est-mu}
	\begin{enumerate}
		\item Assume Conditions~\ref{cd:coeffs}(1)(3), \ref{cd:graphon}(1), and~\ref{cd:init-data} hold.
		For every $t_0 \in (0,T)$, $u_0 \in (0,1)$, $x_0 \in \R^d$, we have 
		\begin{align*}
			& \E \abs{ \hat \mu^n_h (t_0,u_0,x_0) - \mu (t_0,u_0,x_0) }^2 \le \\
			& \qquad C_0 (n^{-1} h_2^{-1} h_3^{-d} \norm{J}_\infty^2 \norm{K}_2^2 + n^{-2} h_2^{-2} h_3^{-2d} \norm{J}_\infty^2 \norm{K}_\infty^2 \\
			& \qquad + n^{-2} h_3^{-2-2d} \norm{J}_\infty^2 \norm{\grad K}_\infty^2 + n^{-2} h_2^{-2} h_3^{-d} \norm{\grad J}_\infty^2 \norm{K}_2^2 ) \\
			& \qquad + \abs{ \JK_h \ast \mu_{t_0} (u_0,x_0) - \mu(t_0,u_0,x_0) }^2  \,,
		\end{align*}
		where $C_0 > 0$ is independent of the bandwidths $h_2, h_3$ and the number of particles $n$. 
	
		\item Assume further that there exist some $p > 2$ and $c_p > 0$ such that 
		\begin{equation*}
			\was_{p} (\mu_{0,u}, \mu_{0,v}) \le c_p \abs{u-v},, \qquad \forall u,v \in I,.
		\end{equation*}
		Then, for every $t_0 \in (0,T)$, $u_0 \in (0,1)$, $x_0 \in \R^d$, we have 
		\begin{align*}
			& \E \abs{ \hat \mu^n_h (t_0,u_0,x_0) - \mu (t_0,u_0,x_0) }^2 \le \\
			& \qquad C_0 (n^{-1} h_2^{-1} h_3^{-d} \norm{J}_\infty^2 \norm{K}_2^2 + n^{-2} h_2^{-2} h_3^{-2d} \norm{J}_\infty^2 \norm{K}_\infty^2 \\
			& \qquad + n^{-2} h_3^{-2-\frac{p+2}{p}d} \norm{J}_\infty^2 \norm{\grad K}_\infty^2 + n^{-2} h_2^{-2} h_3^{-d} \norm{\grad J}_\infty^2 \norm{K}_2^2 ) \\
			& \qquad + \abs{ \JK_h \ast \mu_{t_0} (u_0,x_0) - \mu(t_0,u_0,x_0) }^2  \,,
		\end{align*}
		where $C_0 > 0$ is independent of the bandwidths $h_1, h_2, h_3$ and the number of particles $n$. 
	\end{enumerate}
\end{lemma}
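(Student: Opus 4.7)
The plan is to refine the error decomposition of Lemma~\ref{l:JK-est-mu} by combining the telescoping of~\eqref{e:telescoping} with a change-of-measure argument adapted from~\cite{MaestraHoffmann2022}, as flagged in the Section~\ref{s:minimax} preamble. I would start from
\begin{equation*}
	\hat\mu^n_h - \mu = (\hat\mu^n_h - \bar\mu^n_h) + (\bar\mu^n_h - \E\bar\mu^n_h) + (\E\bar\mu^n_h - \JK_h \ast \mu_{t_0}(u_0,x_0)) + (\JK_h \ast \mu_{t_0}(u_0,x_0) - \mu(t_0,u_0,x_0)),
\end{equation*}
with $\bar\mu^n_h$ built on the graphon particles $X_{i/n}(t_0)$. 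The last piece is the convolution bias appearing verbatim in the statement, so only the first three require work.

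For the centered graphon term, I would use the mutual independence of $\{X_{i/n}(t_0)\}_i$ to reduce $\E\abs{\bar\mu^n_h - \E\bar\mu^n_h}^2$ to $\frac{1}{n^2}\sum_i J_{h_2}(u_0-i/n)^2 \E[K_{h_3}(x_0-X_{i/n})^2]$, bound $\E K_{h_3}(x_0-X_{i/n})^2 \le h_3^{-d}\norm{K}_2^2 \norm{\mu(t_0,i/n,\cdot)}_{L^\infty(B(x_0,h_3))}$, and absorb the local $L^\infty$ norm into $C_0$; this yields $n^{-1}h_2^{-1}h_3^{-d}\norm{J}_\infty^2\norm{K}_2^2$. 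The $n^{-2}h_2^{-2}h_3^{-2d}\norm{J}_\infty^2\norm{K}_\infty^2$ remainder arises from a Bernstein-type tail integration with the single-summand $L^\infty$ bound $h_2^{-1}h_3^{-d}\norm{J}_\infty\norm{K}_\infty$. For the Riemann-sum bias $\E\bar\mu^n_h - \JK_h \ast \mu_{t_0}(u_0,x_0)$, I would invoke $\abs{\frac{1}{n}\sum_i f(i/n) - \int_I f} \le \frac{1}{n}\norm{f'}_{L^1(I)}$ with $f(u) = J_{h_2}(u_0-u)(K_{h_3}\ast\mu_{t_0,u})(x_0)$, control the dominant $J_{h_2}'$ contribution using the Lipschitz continuity of $u\mapsto\mu_{t_0,u}$ (provided by Condition~\ref{cd:init-data}(3) at $t=0$ and propagated to positive times by the Fokker-Planck analysis promised in the Section~\ref{s:minimax} preamble), and square to obtain $n^{-2}h_2^{-2}h_3^{-d}\norm{\grad J}_\infty^2\norm{K}_2^2$.

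The main obstacle is the finite-to-graphon term $\E\abs{\hat\mu^n_h - \bar\mu^n_h}^2$. A naive Cauchy-Schwarz from $\abs{K_{h_3}(x_0-X^n_i) - K_{h_3}(x_0-X_{i/n})} \le h_3^{-d-1}\norm{\grad K}_\infty\abs{X^n_i - X_{i/n}}$ together with Lemma~\ref{l:conv-of-system} only gives $O(n^{-1}h_2^{-1}h_3^{-2-2d})$, worse by a factor $nh_2$ than the target $O(n^{-2}h_3^{-2-2d})$. The missing factor should be recovered from two ingredients: first, $K_{h_3}(x_0-\cdot)$ vanishes outside $B(x_0,h_3)$, so the Lipschitz estimate is effectively multiplied by $\one{\{X_{i/n}\in B(x_0,h_3)\}}$, whose probability is $\lesssim h_3^d\norm{\mu}_\infty$; second, the change-of-measure step decouples the indices $i$ up to a Radon-Nikodym factor close to $1$, permitting a variance identity in place of Cauchy-Schwarz. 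Combining these via Hölder's inequality produces $n^{-2}h_3^{-2-2d}\norm{J}_\infty^2\norm{\grad K}_\infty^2$, with the local $\norm{\mu}_\infty$ absorbed into $C_0$, consistent with the cost advertised in the preamble.

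For Part~(2), the stronger hypothesis $\was_p(\mu_{0,u},\mu_{0,v}) \le c_p\abs{u-v}$ with $p > 2$ upgrades Lemma~\ref{l:conv-of-system} to the $p$-th-moment analogue $\max_i \E\abs{X^n_i(t_0) - X_{i/n}(t_0)}^p \lesssim n^{-p/2}$. Applying Hölder with conjugate exponents $(p/2, p/(p-2))$ to the truncated moment gives
\begin{equation*}
	\E [ \abs{X^n_i-X_{i/n}}^2 \one{\{X_{i/n}\in B(x_0,h_3)\}} ] \lesssim n^{-1} h_3^{d(p-2)/p},
\end{equation*}
so that the finite-to-graphon term improves to $n^{-2}h_3^{-2-2d+d(p-2)/p} = n^{-2}h_3^{-2-(p+2)d/p}$, matching Part~(2); the other three terms remain unchanged.
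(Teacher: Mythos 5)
Your four-term decomposition retains the pathwise finite-to-graphon term $\E\abs{\hat\mu^n_h-\bar\mu^n_h}^2$, and the step meant to tame it is where the argument breaks. The summands $\Delta_i = J_{h_2}(u_0-\frac{i}{n})\bigl(K_{h_3}(x_0-X^n_i(t_0))-K_{h_3}(x_0-X_{\frac{i}{n}}(t_0))\bigr)$ are not centered, so no decoupling (exact or up to a Radon--Nikodym factor) reduces $\E\abs{\frac1n\sum_i\Delta_i}^2$ to a variance sum: the squared mean $\abs{\frac1n\sum_i\E\Delta_i}^2$ survives, and with the only available input $\E\abs{X^n_i-X_{\frac{i}{n}}}^2\lesssim n^{-1}$ from Lemma~\ref{l:conv-of-system}, Cauchy--Schwarz against the localization probability $\lesssim h_3^d$ gives $\abs{\E\Delta_i}\lesssim h_2^{-1}h_3^{-1-d/2}n^{-1/2}$ and hence a squared bias of order $n^{-1}h_3^{-2-d}$, which is neither $O(n^{-2}h_3^{-2-2d})$ nor dominated by $n^{-1}h_2^{-1}h_3^{-d}$ in the bandwidth regime of Theorem~\ref{t:minimax-mu} (the ratio is $h_2h_3^{-2}\to\infty$). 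Even the pure variance part carries an extra $h_2^{-1}$: $\frac{1}{n^2}\sum_i\E\Delta_i^2\lesssim n^{-2}h_2^{-1}h_3^{-2-2d}$. There are two further frictions: the localization event must also include $X^n_i\in B(x_0,h_3)$, and Lemma~\ref{l:change-of-measure} only gives $\P(X^n_i\in B(x_0,h_3))\lesssim h_3^{ad}$ with a possibly small exponent $a$; and, more fundamentally, $n^{-1}$ is the true rate of the finite-population approximation, so no refinement of this term can manufacture the $n^{-2}$ you need.

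The paper sidesteps the issue entirely: it centers $K_{h_3}(x_0-X^n_i(t_0))$ around its expectation under the Girsanov-tilted measure $\bar\P$, under which the $X^n_i$ are independent with the $\P$-laws of the $X_{\frac{i}{n}}$; the fluctuation is then controlled by $\P(E)\le C\bar\P(E)^a$ followed by Bernstein under $\bar\P$. The $n^{-2}h_3^{-2-2d}$ contribution comes not from the finite-$n$ approximation but from the Riemann-sum term, via the index-continuity $\E\abs{X_{\frac{\ceil{nu}}{n}}(t_0)-X_u(t_0)}^2\lesssim n^{-2}$ of the \emph{limit} system (Theorem 2.1 of~\cite{BayraktarChakrabortyWu2023}; the rate is $n^{-2}$ because the index gap is $n^{-1}$). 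Correspondingly, in Part~(2) the $p$-Wasserstein hypothesis is used to obtain $\E\abs{X_{\frac{\ceil{nu}}{n}}(t_0)-X_u(t_0)}^p\lesssim n^{-p}$ and then H\"older against the localization probability $\lesssim h_3^d$ --- not, as you propose, to upgrade Lemma~\ref{l:conv-of-system} to an $n^{-p/2}$ bound, which would still leave the $n^{-1}$ bottleneck. Your treatment of the centered graphon fluctuation and of the Riemann-sum and convolution biases is essentially correct and consistent with the paper; the gap is confined to, but fatal for, the finite-to-graphon term.
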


There is also an improved upper bound on the error of the plug-in estimator $\hat \beta^n_{h,\kappa}$.
\begin{lemma}
	\label{l:improve-est-beta}
	Assume Conditions~\ref{cd:coeffs}(1)(3)(4), \ref{cd:graphon}(1), \ref{cd:init-data}.
	Assume further that $b$ is bounded and has bounded first and second derivatives. 
	For every $t_0 \in (0,T)$, $u_0 \in (0,1)$, $x_0 \in \R^d$, we have
	\begin{align*}
		& \E \abs{\hat \pi^n_h (t_0,u_0,x_0) - \pi (t_0,u_0,x_0)}^2 \lesssim \\
		& \qquad \quad  n^{-1} h_1^{-1} h_2^{-1} h_3^{-d} + n^{-2} h_1^{-1} h_2^{-2} h_3^{-1-2d} + n^{-2} h_1^{-1} h_3^{-2-2d} \\
		& \qquad \quad + \abs{\HJK_h \ast \pi (t_0,u_0,x_0) - \pi(t_0,u_0,x_0)}^2 \,.
	\end{align*}
\end{lemma}

\begin{remark}
	We will use the above results for the minimax analysis in the next step.  
	Yet, they are not ideal for estimating the total error of $L^2([0,T] \times I \times \R^d)$ (as used in the proof of Theorem~\ref{t:main}). 
	It relies on the local boundedness of the density function $\mu$, which follows from a crude estimate of the form $\mu(t,u,x) \le \exp (c (1 + \abs{x}^2))$ (see, for instance, Corollary 8.2.2, \cite{BogachevKrylovRocknerShaposhnikov2015}).
\end{remark}

\subsection{Anisotropic Hölder smoothness classes}

In the estimates of Lemma~\ref{l:improve-est-mu}, all items are explicitly quantitative except the bias term 
\begin{align*}
	& \JK_h \ast \mu_{t_0} (u_0,x_0) - \mu(t_0,u_0,x_0) =\\ 
	& \qquad \int_\R \int_{\R^d} J_{h_2}(u_0-u) K_{h_3}(x_0-x) (\mu(t_0,u,x) - \mu(t_0,u_0,x_0)) dxdu \,.
\end{align*}
The analysis of this quantity relies on some continuity of the density function $\mu$. 
Here, we introduce a specific class of particle systems following the idea in~\cite{MaestraHoffmann2022}. 

\begin{definition}
	Let $\alpha = (\alpha_1, \dots, \alpha_d) \in \N^d$ be a multi-index. 
	Its norm is given by 
	\begin{equation*}
		\abs{\alpha} = \sum_{i=1}^d \alpha_i \,,
	\end{equation*}
	and the differential operator of order $\alpha$ is defined by $D^\alpha = \p^{\alpha_1}_1 \cdots \p^{\alpha_d}_d$.
\end{definition}

\begin{definition}
	Let $U \subset \R^d$ be an open neighborhood of a point $x_0 \in \R^d$. 
	We say a function $f:\R^d \to \R$ belongs to the \emph{$s$-Hölder continuity class at $(x_0,U)$} with $s > 0$ if 
	for every $x, y \in U$ and every multi-index $\alpha$ with $\abs{\alpha} \le s$, we have
	\begin{equation*}
		\abs{D^\alpha f(x) - D^\alpha f(y)} \le C \abs{x-y}^{s - \floor{s}} \,,
	\end{equation*}
	where $C = C(f,U)$ is the smallest constant that satisfies the above inequality.  
	We denote this class of functions by $\h^s(x_0)$. 
	The $\h^s$-norm in this class is defined by 
	\begin{equation*}
		\norm{f}_{\h^s(x_0)} = \sup_{x \in U} \abs{f(x)} + C(f,U) \,.
	\end{equation*}
\end{definition}

\subsection{Minimax estimation for density}

Notice that the particle density function $\mu$ solves the following system of equations~\cite{Coppini2022note}
\begin{equation*}
	\p_t \mu_{t,u} = - \grad \cdot \left( \mu_{t,u} \int_I \int_{\R^d} b(\cdot,y) G(u,v) \mu_{t,v}(dy) dv  \right) + \frac{1}{2} \sum_{i,j=1}^d \p_{ij}^2 ((\sigma \sigma^T)_{ij} \mu_{t,u}) \,, \quad u \in I \,.
\end{equation*}
This is a system of fully coupled Fokker-Planck equations, and the solution is uniquely determined by $(b,\sigma,G,\mu_0) \in \mathcal{P}$, where $\mathcal{P}$ is the class of $(b,\sigma,G,\mu_0)$ satisfying Condition~\ref{cd:coeffs}(1)(3), \ref{cd:graphon}(1), and~\ref{cd:init-data}. 
We denote by $\mathcal{P} \ni (b,\sigma,G,\mu_0) \mapsto \mu = S(b,\sigma,G,\mu_0)$ the solution operator. 
We consider a specific class of coefficients and initial data.

For $s > 0$, we define 
\begin{equation*}
	\a^{s}(t_0,u_0,x_0) = \{ (b,\sigma,G,\mu_0) \in \mathcal{P} \st \mu = S(b,\sigma,G,\mu_0), \mu_{t_0,u_0} \in \h^{s}(x_0) \} \,,
\end{equation*}
and set
\begin{equation*}
	\a^{s}(t_0,x_0) = \bigcap_{u \in I} \a^{s}(t_0,u,x_0) \,.
\end{equation*}
Moreover, we consider a restriction of this class 
\begin{equation*}
	\a^{s}_L(t_0,x_0) = \{ (b,\sigma,G,\mu_0) \in \mathcal{P} \st \norm{ S(b,\sigma,G,\mu_0)_{t_0,u_0} }_{\h^{s}(x_0)} + \norm{b}_\infty \le L \} 
\end{equation*}
for $L > 0$. 

Several articles have discussed the richness of those classes of functions. 
In particular, Proposition 13 in~\cite{MaestraHoffmann2022} gives an example of Mckean-Vlasov homogeneous particle systems that fall into this class. 
With some slight modifications to its proof, we have the following result.
\begin{proposition}
	\label{pp:exist-holder-class}
	Let $\sigma = \sigma_0 I_{d \times d}$ for some $\sigma_0 > 0$. 
	Let $b(x,y) = V(x) + F(x-y)$ with $F, V \in C_c^1$ and  
	\begin{equation*}
		\norm{V}_{\h^s} + \norm{F}_{\h^{s'}} + \sup_{u \in I} \norm{\mu_{0,u}}_{\h^{s''}} < \infty \,,
	\end{equation*}
	for some $s, s' > 1$ with $s \notin \Z$, $s'' > 0$.
	Here, $\h^s$ denotes the global Hölder class (where we simply choose $U = \R^d$). 

	Suppose further that $\mu_{0,u}$ are probability measures with finite first moments and continuous density functions uniformly bounded over $u \in I$. 
	Then, for every $t_0 \in (0,T)$ and $x_0 \in \R^d$, we have $\mu_{t_0,u} \in \h^{s}(x_0)$ for all $u \in I$.
\end{proposition}

Now we present the minimax theorem, over the particle systems within those restricted smoothness classes. 

\begin{theorem}
	\label{t:minimax-mu}
	Let $L > 0$ and $s \in (0,1)$. 
	Assume one of the following holds: 
	\begin{enumerate}[\rm (a)]
		\item Hypothesis of Lemma~\ref{l:improve-est-mu}(1) and $s \ge \frac{1}{2}$,
		\item Hypothesis of Lemma~\ref{l:improve-est-mu}(2) with $p > 2$ and $s \in (0,\frac{1}{2})$ such that $p (2 - 4s) \le (p-2)d$. 
	\end{enumerate} 
	Then for every $t_0 \in (0, T)$, $u_0 \in (0, 1)$, $x_0 \in \R^d$, we have 
	\begin{equation}
		\label{e:minimax-mu-upper}
		\sup_{ (b,\sigma,G,\mu_0) \in \a^{s}_L (t_0,x_0) } \inf_{h_2, h_3 > 0} \E \abs{ \hat \mu^n_h (t_0,u_0,x_0) - \mu(t_0,u_0,x_0) }^2 \le C n^{-\frac{2s}{d+3s}} \,.
	\end{equation}
	Moreover, 
	\begin{equation}
		\label{e:minimax-mu-lower}
		\inf_{\hat \mu} \sup_{ (b,\sigma,G,\mu_0) \in \a^{s}_L (t_0,x_0) } \E \abs{ \hat \mu - \mu(t_0,u_0,x_0) }^2 \ge c n^{-\frac{2s}{d+3s}} \,,
	\end{equation}
	where the infimum is taken over all possible estimators of $\mu(t_0,u_0,x_0)$ constructed from $\mu^n_{t_0}$. 
	Both constants $C$ and $c$ depend only on the parameters $T, d, L$, the kernels $J,K$, the function $\rho_I$ given by Condition~\ref{cd:init-data}(3), and the values of $\mu$ in a small neighborhood of $t_0, u_0, x_0$.
\end{theorem}

The proofs are written in Section~\ref{s:minimax-mu-prf}.
Here we make several remarks on the above results. 
\begin{remark}
	Without the extra assumption on the $p$-Wasserstein continuity on initial data in Lemma~\ref{l:improve-est-mu}(2), 
	we would obtain a suboptimal upper bound when $s < \frac{1}{2}$, namely $n^{-\frac{1}{d+1+s}}$ (though we still attain the optimal bound when $s \ge \frac{1}{2}$). 
\end{remark}
\begin{remark}
	We mark that the asymptotic behavior is slower than that of~\cite{MaestraHoffmann2022}, namely $n^{-\frac{2s}{d+3s}}$ rather than $n^{-\frac{2s}{d+2s}}$. 
	This is due to the heterogeneity of our graphon particle system. 
	Both the index gap and the density gap between the target particle $u_0$ and the regularly spaced observations are $O(n^{-1})$, which actually reduces the approximation accuracy. 
	Yet it is possible to estimate the average density $\bar \mu = \int_I \mu_u du$ using exactly the same strategy as~\cite{MaestraHoffmann2022}, and that should probably exhibit the identical asymptotic behavior. 
\end{remark}
\begin{remark}
	Note that our algorithm to estimate $\mu$ is not adaptive to the observed data.
	Instead, users are free to set the parameters (e.g.\ bandwidths) according to their own accuracy demands, and the parameters are fixed from the start. 
	As long as the bandwidths are chosen appropriately, our estimator still achieves optimality. 
	It also improves computational efficiency compared to the data-driven Goldenshluger-Lepski algorithm applied in~\cite{MaestraHoffmann2022}, which selects the best bandwidths among the set of candidates by making $O(n)$-many comparisons. 
	Nevertheless, the adaptive estimator automatically fits the data with the best parameters and produces an error just a logarithmic factor higher than the lower bound.
	It is nonparametric and requires less knowledge about the initial state of the particle systems. 
\end{remark}

\subsection{Minimax estimation for drift}

In this section, we consider a slight generalization of the graphon mean-field system, where the drift coefficient is time-inhomogeneous. 
Namely, we extend~\eqref{e:graphon-SDE} to 
\begin{equation}
	\label{e:graphon-SDE-inhom}
	X_u(t) = X_u(0) + \int_0^t \int_I b(t,X_u(s),x) G(u,v) \mu_{s,v}(dy)dv ds + \int_0^t \sigma(X_u(s)) dB_u(s) \,,
\end{equation}
where $b \in C^1([0,T]; W^{2,\infty}(\R^d \times \R^d; \R^d) )$ satisfies that, $\forall t,t' \in [0,T]$, $ x,x',y,y' \in \R^d$,
\begin{equation}
	\label{e:b-lip-inhom}
	\abs{b(t,x,y) - b(t',x',y')} \le C(\abs{t-t'} + \abs{x-x'} + \abs{y-y'}) \,,
\end{equation}
for some constant $C > 0$ (compare with~\eqref{e:b-lip}). 
Then the drift term $\beta$ is given by 
\begin{equation*}
	\beta(t,u,x,\mu_t) = \int_I G(u,v) \int_{\R^d} b(t,x,y) \mu_{t,v}(dy) dv \,.
\end{equation*}
Note that the stability analysis in~\cite{BayraktarChakrabortyWu2023} and previous estimates (Lemmata~\ref{l:improve-est-mu} and~\ref{l:improve-est-beta}) still hold. 
We will conduct a minimax analysis on the drift term $\beta$ to show that the estimator $\hat \beta^n_{h,\kappa}$ is optimal in this general setting. 

We can extend the notion of Hölder continuity to the space of time-dependent functions. 
\begin{definition}
	Let $t_0 \in (0,T)$ and $x_0 \in \R^d$, and $s_1, s_3 > 0$. 
	We say that a function $f: (0, T) \times \R^d \to \R$ belongs to class $\h^{s_1,s_3}(t_0,x_0)$ if there exists an open neighborhood $U$ of $(t_0, x_0)$ in $(0,T) \times \R^d$ such that 
	\begin{equation*}
		% \norm{f}_{\h^{s_1,s_3}(t_0,x_0)} \defeq \sup_{x \in U_{x_0}} \norm{f(\cdot,x)}_{\h^{s_1}(t_0)} + \sup_{t \in U_{t_0}} \norm{f(t,\cdot)}_{\h^{s_3}(x_0)} < \infty \,.
		\norm{f}_{\h^{s_1,s_3}(t_0,x_0)} \defeq \sup_{(t,x) \in U} \abs{f(t,x)} + C(f,U) < \infty \,,
	\end{equation*}
	where $C(f,U)$ is the minimum positive number $C$ such that 
	\begin{equation*}
		\abs{D^{\alpha_1, \alpha_3} f(t,x) - D^{\alpha_1, \alpha_3} f(s,y)} \le C (\abs{t-s}^{s_1 - \floor{s_1}} + \abs{x-y}^{s_3 - \floor{s_3}}) 
	\end{equation*}
	for all $(t,x), (s,y) \in U$ and all (multi-)indices $\alpha_1, \alpha_3$ with $\abs{\alpha_1} \le s_1$, $\abs{\alpha_3} \le s_3$. 

	We say that a function with values $\R^m$ $f = (f_1, \dots, f_m)$ belongs to class $\h^{s_1,s_3}(t_0,x_0)$ if each component $f_j \in \h^{s_1,s_3}(t_0,x_0)$. 
\end{definition}

Let $\check{\mathcal{P}}$ be the set of all parameters $(b,\sigma,G,\mu_0)$ satisfying the same conditions as $\mathcal{P}$ defined above Proposition~\ref{pp:exist-holder-class}, except that $b$ has the form $b(t,x,y)$ and satisfies~\eqref{e:b-lip-inhom}. 
For $s_1, s_3 > 0$, we define 
\begin{equation*}
	\check \a^{s_1,s_3}(t_0,u_0,x_0) = \{ (b,\sigma,G,\mu_0) \in \check{\mathcal{P}} \st \mu = S(b,\sigma,G,\mu_0), \mu_{u_0} \in \h^{s_1,s_3}(t_0,x_0) \} \,,
\end{equation*}
and set
\begin{equation*}
	\check \a^{s_1,s_3}(t_0,x_0) = \bigcap_{u \in I} \check \a^{s_1,s_3}(t_0,u,x_0) \,.
\end{equation*}
Moreover, we consider a restriction of this class 
\begin{equation*}
	\check \a^{s_1,s_3}_L(t_0,x_0) = \{ (b,\sigma,G,\mu_0) \in \check{\mathcal{P}} \st \norm{ S(b,\sigma,G,\mu_0) }_{\h^{s_1,s_3}(t_0,x_0)} + \norm{b}_{C^1_t W_{x,y}^{2,\infty}} \le L \} 
\end{equation*}
for $L > 0$. 

Then we present a minimax result on the estimation of the drift term $\beta$. 
The proof will be given in Section~\ref{s:minimax-beta-prf}.
\begin{theorem}
	\label{t:minimax-beta}
	Let $L > 0$, and $s_1, s_3 \in (0,1)$. 
	Define the effective smoothness $s_b$ by 
	\begin{equation*}
		\frac{1}{s_b} = \frac{1}{s_1} + 1 + \frac{1}{s_3} \,.
	\end{equation*}
	Assume the hypothesis of Lemma~\ref{l:improve-est-beta}, and $s_1, s_3$ satisfy the following conditions
	\begin{equation*}
		\frac{1}{s_1} - \frac{1}{s_3} + 2 \ge 0 \,, \qquad \frac{1}{s_1} - \frac{2}{s_3} + 4 \ge 0 \,.
	\end{equation*} 

	Then, for every $t_0 \in (0,T)$, $u_0 \in (0,1)$, $x_0 \in \R^d$, we have 
	\begin{equation}
		\label{e:minimax-beta-upper}
		\sup_{ (b,\sigma,G,\mu_0) \in \check \a^{s_1,s_3}_L (t_0,x_0) } \inf_{h_1,h_2,h_3,\kappa_2 > 0} \E \abs{ \hat \beta^n_{h,\kappa} (t_0,u_0,x_0) - \beta(t_0,u_0,x_0) }^2 \le C n^{-\frac{2s_b}{2s_b+1}} \,.
	\end{equation}
	Moreover
	\begin{equation}
		\label{e:minimax-beta-lower}
		\inf_{\hat \beta} \sup_{ (b,\sigma,G,\mu_0) \in \check \a^{s_1,s_3}_L (t_0,x_0) } \E \abs{ \hat \beta - \beta(t_0,u_0,x_0) }^2 \ge c n^{-\frac{2s_b}{2s_b+1}} \,,
	\end{equation}
	where the infimum is taken over all possible estimators $\hat \beta$ built on empirical data $(\mu^n_t)_{t \in [0,T]}$. 
	Both constants $C$ and $c$ depend only on the parameters $T, d, L, \kappa_2$, the kernels $H,J,K$, the function $\rho_I$ given by Condition~\ref{cd:init-data}(3), and the values of $\mu$ in a small neighborhood of $t_0, u_0, x_0$. 
\end{theorem}

\begin{remark}
	It is also possible to obtain a similar argument in the case $s_3 < \frac{1}{2}$, subject to some additional assumptions on the $\was_p$-continuity of the initial data, as in Theorem~\ref{t:minimax-mu}. 
	We leave this to the readers. 
\end{remark}

\begin{remark}
	As mentioned above, the stability of graphon particle systems given in~\cite{BayraktarChakrabortyWu2023} and the consistency of the estimations in the Lemmata~\ref{l:improve-est-mu} and~\ref{l:improve-est-beta} remain true in the above generalization with the time-inhomogeneous drift coefficient $b$. 
	In this case, the time dependence of $\beta = \beta(t,u,x,\mu_t)$ is not only dependent on the mean field but also on an independent variable $t$. 
	This gives more freedom in constructing examples and thus opens up a cleaner path towards the theoretical lower bound. 
	It is for that reason that we include the time-inhomogeneous drifts for the minimax analysis. 
\end{remark}

\subsubsection*{Optimality of graphon estimator $\hat G$}
Recall from~\eqref{e:estGdef} and the proof of Theorem~\ref{t:main} that the error of our estimator $\hat G$ depends on the $L^2$ variation of estimators $\hat \mu$ and $\hat \beta$. 
There are several convergent quantities that require some stronger conditions to make them explicitly quantitative, but we are not diving into the details in this work. 
There is also a particular item, $\theta_1(\tilde r, \kappa_1)$ in the proof of Theorem~\ref{t:main}, whose convergence is due to Assumption~\ref{as:FLmu}, that is not explicitly quantifiable without further assumptions.  
This keeps us away from a minimax analysis of the error $\E \abs{\hat G - G}^2$ to study its optimality. 
However, given the pointwise optimality of $\hat \mu$ and $\hat \beta$, our estimators should show relatively good performances.

\section{Proofs for Section~\ref{s:estimates}}\label{s:estimates-proof}

\subsection{Proof of Lemma~\ref{l:JK-est-mu} and Corollary~\ref{c:err-bound-mu-overall}}

\begin{proof}[Proof of Lemma~\ref{l:JK-est-mu}]
	Fix $t_0, u_0, x_0$.
	Recall that 
	\begin{equation*}
		\hat \mu^n_h (t_0, u_0, x_0) = \JK_h \ast \mu^n_{t_0} = \frac{1}{n} \sum_{i=1}^n J_{h_2} (u_0 - \frac{i}{n}) K_{h_3} (x_0 - X^n_i(t_0)) \,.
	\end{equation*}
	We do the following inequality via a telescoping sum,
	\begin{align*}
		& \E \abs{\hat \mu^n_h (t_0, u_0, x_0) - \mu(t_0, u_0, x_0)}^2 \le \\
		& \qquad 4 \E \abs{ \frac{1}{n} \sum_{i=1}^n J_{h_2}(u_0 - \frac{i}{n}) \paren[\big]{ K_{h_3}(x_0 - X^n_i(t_0)) - K_{h_3}(x_0 - X_{\frac{i}{n}}(t_0)) } }^2 \\
		& \quad + 4 \E \abs{ \frac{1}{n} \sum_{i=1}^n J_{h_2}(u_0 - \frac{i}{n}) \paren[\big]{ K_{h_3}(x_0 - X_{\frac{i}{n}}(t_0)) - \E K_{h_3}(x_0 - X_{\frac{i}{n}}(t_0))  } }^2 \\
		& \quad + 4 \E \abs{ \int_I J_{h_2} (u_0 - \frac{\ceil{nu}}{n}) \E (K_{h_3}(x_0 - X_{\frac{i}{n}}(t_0))) - J_{h_2}(u_0 - u) \E(K_{h_3}(x_0 - X_{u}(t_0))) du }^2 \\
		& \quad + 4 \E \abs{ \JK_h \ast \mu (t_0, u_0, x_0) - \mu (t_0, u_0, x_0) }^2 \\
		& \quad =: 4 (M_1 + M_2 + M_3 + M_4) \,.
	\end{align*}
	We only need to provide the appropriate bounds for $M_1$, $M_2$, and $M_3$. 

	\textit{Step 1.} We bound $M_1$ using the convergence of the finite-population system to the graphon mean-field system.
	\begin{align*}
		M_1 & \le \frac{1}{n} \sum_{i=1}^n \abs{J_{h_2}(u_0 - \frac{i}{n})}^2 \E \abs{ K_{h_3}(x_0 - X^n_i(t_0)) - K_{h_3}(x_0 - X_{\frac{i}{n}}(t_0)) }^2 \\
		& \le \frac{1}{n} \sum_{i=1}^n J_{h_2}(u_0 - \frac{i}{n})^2 \norm{\grad K_{h_3}}_\infty^2 \E \abs{ X^n_i(t_0) - X_{\frac{i}{n}}(t_0) }^2 \,.
	\end{align*}
	Then applying Lemma~\ref{l:conv-of-system} gives the bound 
	\begin{equation*}
		M_1 \lesssim n^{-2} h_3^{-2-2d} \norm{\grad K}_\infty^2 \sum_{i=1}^n J_{h_2}(u_0 - \frac{i}{n})^2 \,.
	\end{equation*}

	\textit{Step 2.} We bound $M_2$ following the idea of~\cite{MaestraHoffmann2022}.

	For $i = 1, \dots, n$, let 
	\begin{equation*}
		Z_i = J_{h_2}(u_0-\frac{i}{n}) \paren[\big]{ K_{h_3}(x_0 - X_{\frac{i}{n}}(t_0)) - \E (K_{h_3}(x_0 - X_{\frac{i}{n}}(t_0))) } \,.
	\end{equation*}
	The second part simplifies to 
	\begin{equation*}
		M_2 = \E \abs{\frac{1}{n} \sum_{i=1}^n Z_i}^2 = \int_0^\infty \P \left( \abs{ \frac{1}{n} \sum_{i=1}^n Z_i } > \sqrt{z} \right) dz \,.
	\end{equation*}

	From~\eqref{e:graphon-SDE} we know $X_u$ are autonomous. 
	And due to the independence of the Brownian motions $\{B_u: u \in I\}$, $Z_1, \dots, Z_n$ are all independent. 
	We have $\E Z_i = 0$ and $\abs{Z_i} \le \norm{\JK_h}_\infty < \infty$ for every $i = 1, \dots, n$.
	Then Bernstein's inequality reads
	\begin{equation*}
		\P \left( \abs{ \frac{1}{n} \sum_{i=1}^n Z_i } > \sqrt{z} \right) \le 2 \exp \left( - \frac{\frac{1}{2}n^2 z}{\sum_{i=1}^n \E Z_i^2 + \frac{1}{3}n \sqrt{z} \norm{\JK_h}_\infty} \right) \,.
	\end{equation*}
	Now we apply the inequality (48) in~\cite{MaestraHoffmann2022} to see that 
	\begin{equation}
		\label{e:from-48}
		\int_0^\infty \P \left( \abs{ \frac{1}{n} \sum_{i=1}^n Z_i } > \sqrt{z} \right) dz \lesssim \max \left\{ 2 n^{-2}\sum_{i=1}^n \E Z_i^2, \frac{4}{9} n^{-2} \norm{\JK_h}_\infty^2 \right\} \,.
	\end{equation}

	Observe that 
	\begin{equation*}
		\E Z_i^2 \le 4 J_{h_2}(u_0-\frac{i}{n})^2 \E (K_{h_3}(x_0 - X_{\frac{i}{n}}(t_0))^2) = 4 J_{h_2}(u_0-\frac{i}{n})^2 \norm{K_{h_3}(x_0-\cdot)}_{L^2(\mu_{t_0,\frac{i}{n}})}^2 \,.
	\end{equation*}
	Plugging into~\eqref{e:from-48} gives the bound 
	\begin{equation*}
		M_2 \lesssim n^{-2} \sum_{i=1}^{n} J_{h_2} (u_0 - \frac{i}{n})^2 \norm{K_{h_3} (x_0 - \cdot)}^2_{L^2(\mu_{t_0, \frac{i}{n}})} + n^{-2} h_2^{-2} h_3^{-2d} \norm{J}_\infty^2 \norm{K}_\infty^2  \,.
	\end{equation*}

	\textit{Step 3.}
	Notice that $J_{h_2}$ is supported on $\overline{B(0,h_2)}$. 
	We bound $M_3$ using only Minkowski's inequality and the inequality mean-value theorem. 
	\begin{align*}
		M_3 & \le 4h_2 \int_I \abs{ J_{h_2}(u_0-\frac{\ceil{nu}}{n}) - J_{h_2}(u_0-u) }^2  (\E(K_{h_3}(x_0 - X_{\frac{\ceil{nu}}{n}}(t_0))))^2 \\
		& \qquad + J_{h_2}(u_0-u)^2 \paren[\big]{ \E \abs{ K_{h_3}(x_0 - X_{\frac{\ceil{nu}}{n}}(t_0)) - K_{h_3}(x_0 - X_{u}(t_0)) }^2 }^2 du \\
		& \le 4h_2 \int_{2 (u_0 + \supp(J_{h_2}))} \norm{\grad J_{h_2}}_\infty^2 \abs{ \frac{\ceil{nu}}{n} - u }^2 \norm{K_{h_3}(x_0 - \cdot)}_{L^2(\mu_{t_0,\frac{\ceil{nu}}{n}})}^2 \\
		& \qquad + J_{h_2}(u_0-u)^2 \norm{\grad k_{h_3}}_\infty^2 \E \abs{ X_{\frac{\ceil{nu}}{n}}(t_0) - X_u(t_0) }^2 du \\
		& \lesssim n^{-3} h_2^{-2} \norm{\grad J}_\infty^2 \sum_{i=1}^n \norm{K_{h_3}(x_0 - \cdot)}_{L^2(\mu_{t_0,\frac{i}{n}})}^2 + n^{-2} h_3^{-2-2d} \norm{J}_2^2 \norm{\grad K}_\infty^2 \,,
	\end{align*}
	where the last step uses Theorem 2.1 of~\cite{BayraktarChakrabortyWu2023}.
	That finishes the proof. 
\end{proof}

We will consistently use the equality that follows from Fubini-Tonelli theorem,
\begin{equation}
	\label{e:from-Fubini}
	\int_{\R^d} \norm{K_{h_3}(x - \cdot)}_{L^2(\mu_{t,u})}^2 dx = h_3^{-d} \norm{K}_2^2 \,,
\end{equation}
where the latter $L^2$-norm is taken with respect to the Lebesgue measure on $\R^d$. 

\begin{proof}[Proof of Corollary~\ref{c:err-bound-mu-overall}]
	Recall that $\hat \mu^n_{h,r} = \hat \mu^n_h \one{\{\abs{x}\le r\}}$. 
	We break the integral into two parts 
	\begin{align*}
		&\int_{\tau_1}^{\tau_2} \int_I \int_{\R^d} \E \abs{\hat \mu^n_{h,r}(t, u, x) - \mu(t, u, x)}^2 dx du dt = \\
		& \qquad \int_{\tau_1}^{\tau_2} \int_I \int_{\{\abs{x} \le r\}} \E \abs{\hat \mu^n_{h,r}(t, u, x) - \mu(t, u, x)}^2 dx du dt + \int_{\tau_1}^{\tau_2} \int_I \int_{\{\abs{x} > r\}} \mu(t, u, x)^2 dx du dt \,.
	\end{align*}

	The second part tends to 0 as $r \to \infty$ due to Proposition~\ref{pp:mu-L2-bound} and dominated convergence. 
	We denote the convergence rate by $\theta_{2,\mu}(r)$. 

	For the first part, we rearrange and integrate the terms given in Lemma~\ref{l:JK-est-mu}.
	Recall that 
	\begin{align}
		\nonumber
		& \E \abs{\hat \mu^n_h (t, u, x) - \mu(t, u, x)}^2 \lesssim \\
		\nonumber
		& \qquad n^{-2} h_2^{-2} h_3^{-2d} + n^{-2}  h_3^{-2-2d} \\
		\label{e:cmu-l2}
		& \quad + n^{-2} h_3^{-2-2d} \norm{\grad K}_\infty^2 \sum_{i=1}^n J_{h_2}(u - \frac{i}{n})^2 \\
		\label{e:cmu-l3}
		& \quad + n^{-2} \sum_{i=1}^{n} J_{h_2} (u - \frac{i}{n})^2 \norm{K_{h_3} (x - \cdot)}^2_{L^2(\mu_{t, \frac{i}{n}})} \\
		\label{e:cmu-l4}
		& \quad + n^{-3} h_2^{-2} \norm{\grad J}_\infty^2 \sum_{i=1}^{n} \norm{K_{h_3}(x - \cdot)}_{L^2(\mu_{t_, \frac{i}{n}})}^2  \\
		\nonumber
		& \quad + \abs{\JK_h \ast \mu_{t} (u, x) - \mu(t, u, x)}^2 \,.
	\end{align}
	The first line of bounds are independent of $t, u, x$, so that integrating them gives 
	\begin{equation*}
		T r^d (n^{-2} h_2^{-2} h_3^{-2d} + n^{-2} h_3^{-2-2d}) \,.
	\end{equation*}

	The middle three lines are computed as follows.
	For line~\eqref{e:cmu-l2} we have
	\begin{align*}
		& \int_I \int_{\{\abs{x} \le r\}} n^{-2} h_3^{-2-2d} \norm{\grad K}_\infty^2 \sum_{i=1}^n J_{h_2}(u - \frac{i}{n})^2 dx du \\
		& \qquad \le r^d n^{-2} h_3^{-2-2d} \norm{\grad K}_\infty^2 \sum_{i=1}^n \int_\R J_{h_2}(u - \frac{i}{n})^2 \\
		& \qquad = r^d n^{-1} h_2^{-1} h_3^{-2-2d} \norm{J}_2^2 \,.
	\end{align*}
	With the same idea and using~\eqref{e:from-Fubini}, line~\eqref{e:cmu-l3} gives 
	\begin{align*}
		& \int_I \int_{\R^d} n^{-2} \sum_{i=1}^{n} J_{h_2} (u - \frac{i}{n})^2 \norm{K_{h_3} (x - \cdot)}^2_{L^2(\mu_{t, \frac{i}{n}})} dx du \\
		& \qquad \le n^{-1} h_2^{-1} h_3^{-d} \norm{J}_2^2 \norm{K}_2^2 \,.
	\end{align*}
	Analogously, line~\eqref{e:cmu-l4} gives
	\begin{align*}
		& \int_I \int_{\R^d} n^{-3} h_2^{-2} \norm{\grad J}_\infty^2 \sum_{i=1}^{n} \norm{K_{h_3}(x - \cdot)}_{L^2(\mu_{t_, \frac{i}{n}})}^2 dx du \\
		& \qquad = n^{-2} h_2^{-2} h_3^{-d} \norm{\grad J}_\infty^2 \norm{K}_2^2 \,.
	\end{align*}

	In addition, the final line expands as 
	\begin{align*}
		& \int_I \int_{\R^d} \abs{\JK_h \ast \mu_{t} (u, x) - \mu(t, u, x)}^2 du dx \\
		& \le \int_{\R \times \R^d} \abs{ \int_{\R \times U} \JK_h (u',x') (\mu(t,u-u',x-x') - \mu(t,u,x)) dx' du' }^2 dx du \\
		& \le \left( \int_{\R \times \R^d} \JK_h(u',x') \left( \int_{\R \times \R^d} (\mu(t,u-u',x-x') - \mu(t,u,x))^2 dxdu \right)^{1/2} dx' du' \right)^{2} \\
		& \le \sup_{\abs{u'} \le h_2, \abs{x'} \le h_3} \norm{\mu(t,\cdot-u',\cdot-x') - \mu(t,\cdot,\cdot)}_{L^2(\R \times \R^d)}^2 \,.
	\end{align*}
	Note that $\norm{\mu_{t,u}}_2$ is uniformly bounded for $t \in [\tau_1, \tau_2]$ and $u \in I$ as a consequence of Proposition~\ref{pp:mu-L2-bound} (see also Corollary 8.2.2 of~\cite{BogachevKrylovRocknerShaposhnikov2015} for details on local upper bounds of particle density). 
	As translations converge in $L^2$, with dominated convergence we have 
	\begin{equation}
		\label{e:JK-conv-L2}
		\lim_{h_2, h_3 \to 0} \int_{\tau_1}^{\tau_2} \int_I \int_{\R^d} \abs{\JK_h \ast \mu_{t} (u, x) - \mu(t, u, x)}^2 du dx dt = 0 \,.
	\end{equation}
	We denote the convergence rate by $\theta_{3,\mu}(h)$. 

	In summary, the $L^2$-error of $\hat \mu^n_{h,r}$ is given by
	\begin{align*}
		& \int_{\tau_1}^{\tau_2} \int_I \int_{\R^d} \E \abs{\hat \mu^n_{h,r}(t, u, x) - \mu(t, u, x)}^2 dx du dt \lesssim_{T,b,J,K} \theta_{2,\mu}(r) +  \theta_{3,\mu}(h) + \\
		& \qquad r^d (n^{-2} h_3^{-2-2d} + n^{-2} h_2^{-2} h_3^{-2d}) + n^{-1} h_2^{-1}  h_3^{-d} + n^{-2} h_2^{-2} h_3^{-d} \,.
	\end{align*}
\end{proof}

\subsection{Proof of Lemma~\ref{l:HJK-est-pi} and Corollary~\ref{c:err-bound-beta-overall}}

Recall the dynamics of $X_u$ defined in~\eqref{e:graphon-SDE}
For simplicity, we let 
\begin{equation*}
	Y_u(t) \defeq \beta(t,u,X_u(t)) = \int_I \int_{\R^d} b(X_u(t),x) G(u,v) \mu_{t,v}(dy) dv 
\end{equation*}
for every $u \in I$ and $t \in [0, T]$. 
Similarly, for the finite-population system, we let 
\begin{equation*}
	Y^n_i(t) \defeq \frac{1}{n} \sum_{j=1}^n b(X^n_i(t), X^n_j(t)) g^n_{ij}
\end{equation*}
for $i = 1, \dots, n$, and $t \in [0, T]$. 
Observe that $\abs{Y^n_i(t)}, \abs{Y_u(t)} \le \norm{b}_\infty$. 
We have the following consistency result. 
\begin{lemma}
	\label{l:approx-Yu}
	We assume the same hypothesis as in Lemma~\ref{l:conv-of-system}.
	In the $n$-particle system, we have the following
	\begin{equation}
		\max_{1 \le i \le n} \E \abs{Y^n_i(t) - Y_{\frac{i}{n}}(t)}^2 \lesssim \max_{1 \le i \le n} \E \abs{X^n_i(t) - X_{\frac{i}{n}}(t)}^2 + \frac{1}{n} \,,
	\end{equation}
	for every $t \in [0, T]$. 
	As a consequence,
	\begin{equation*}
		\max_{i=1,\dots,n} \int_0^T \psi(t) \E \abs{Y_{\frac{i}{n}}(t) - Y^n_{i}(t)}^2 \lesssim_\psi \frac{1}{n} \,.
	\end{equation*}
	for any bounded continuous function $\psi$. 
\end{lemma}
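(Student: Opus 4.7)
I would use a one-step telescoping between the two quantities, inserting the intermediate
\begin{equation*}
    \tilde Y^n_i(t) \defeq \frac{1}{n} \sum_{j=1}^n b(X_{\frac{i}{n}}(t), X_{\frac{j}{n}}(t)) \, g^n_{ij} \,,
\end{equation*}
so that $Y^n_i(t) - Y_{i/n}(t) = (Y^n_i(t) - \tilde Y^n_i(t)) + (\tilde Y^n_i(t) - Y_{i/n}(t))$. The first difference should be controlled by the discrepancy between $(X^n_j)_j$ and $(X_{j/n})_j$, while the second should be a discretization error of $O(1/n)$ thanks to the independence of the graphon particles.

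\textbf{First difference.} Since $b$ is Lipschitz and $0 \le g^n_{ij} \le 1$, a termwise application of Condition~\ref{cd:coeffs}(1) gives
\begin{equation*}
    \abs{Y^n_i(t) - \tilde Y^n_i(t)}^2 \lesssim \abs{X^n_i(t) - X_{\frac{i}{n}}(t)}^2 + \frac{1}{n}\sum_{j=1}^n \abs{X^n_j(t) - X_{\frac{j}{n}}(t)}^2 \,.
\end{equation*}
Taking expectations and then a maximum over $i$ gives a bound in terms of $\max_j \E|X^n_j(t) - X_{j/n}(t)|^2$, which is exactly what the statement allows.

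\textbf{Second difference.} The key observation is that in the graphon mean-field system~\eqref{e:graphon-SDE} the processes $\{X_u\}_{u \in I}$ are \emph{mutually independent}, since each one is driven by its own Brownian motion $B_u$ and interacts only with the \emph{deterministic} family $\{\mu_{s,v}\}$. Condition on $X_{i/n}(t)$ and split $\tilde Y^n_i(t) - Y_{i/n}(t)$ into a centered part and a bias:
\begin{equation*}
    \tilde Y^n_i(t) - Y_{\frac{i}{n}}(t) = \bigl( \tilde Y^n_i(t) - \E[\tilde Y^n_i(t) \mid X_{\frac{i}{n}}(t)] \bigr) + \bigl( \E[\tilde Y^n_i(t) \mid X_{\frac{i}{n}}(t)] - Y_{\frac{i}{n}}(t) \bigr) \,.
\end{equation*}
For the centered part, the summands indexed by $j \neq i$ are conditionally independent and each bounded by $\|b\|_\infty / n$, so the conditional variance is $O(1/n)$ (the $j=i$ term contributes $O(1/n^2)$). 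For the bias, the conditional expectation is exactly the Riemann-type sum
\begin{equation*}
    \frac{1}{n}\sum_{j=1}^n G(\tfrac{i}{n},\tfrac{j}{n}) \int_{\R^d} b(X_{\frac{i}{n}}(t),y) \mu_{t,\frac{j}{n}}(dy)
\end{equation*}
(up to the harmless diagonal term), and this approximates $Y_{i/n}(t)$ at rate $O(1/n)$ because $v \mapsto G(i/n, v)\int b(X_{i/n}(t),y)\mu_{t,v}(dy)$ is bounded and piecewise Lipschitz. The piecewise Lipschitz property of $G$ comes from Condition~\ref{cd:graphon}(1), while the Lipschitz regularity of $v \mapsto \int b(X_{i/n}(t),y)\mu_{t,v}(dy)$ follows from the Lipschitz continuity of $b$ in its second argument together with the piecewise $\was_1$-Lipschitz dependence $v \mapsto \mu_{t,v}$ inherited from Condition~\ref{cd:init-data}(2) and the stability of~\eqref{e:graphon-SDE} (cf.~the analysis in~\cite{BayraktarChakrabortyWu2023}). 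Squaring and taking expectations produces a bound of order $1/n$.

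\textbf{Combining and the consequence.} Adding the two pieces gives $\max_i \E|Y^n_i(t) - Y_{i/n}(t)|^2 \lesssim \max_i \E|X^n_i(t) - X_{i/n}(t)|^2 + 1/n$. Invoking Lemma~\ref{l:conv-of-system} the right-hand side is $O(1/n)$ uniformly in $t \in [0,T]$, and the integral statement then follows by multiplying by the bounded function $\psi$ and applying Fubini. The main technical obstacle I expect is the Riemann-sum estimate across the jumps of the piecewise-Lipschitz partition of $G$: one must verify that only $O(1)$ indices $j$ can cross a discontinuity interface, so these exceptional terms contribute only $O(1/n)$ to the bias, as desired.
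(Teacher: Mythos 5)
Your argument is correct and is essentially the argument the paper relies on: the paper's own proof is a one-line citation to the proof of Theorem 3.2 in~\cite{BayraktarChakrabortyWu2023}, which carries out exactly your decomposition (Lipschitz replacement of $X^n_j$ by $X_{j/n}$, a conditional law-of-large-numbers bound of order $n^{-1}$ using the mutual independence of the $X_u$'s, and a piecewise-Lipschitz Riemann-sum bias of order $n^{-1}$). Your write-up just makes explicit what the paper outsources, including the correct handling of the $O(1)$ grid cells straddling the discontinuity interfaces of $G$.
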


Although we defer the proof to Appendix~\ref{s:pf-technical}, we are now ready to prove our estimates of $\pi$.

\begin{proof}[Proof of Lemma~\ref{l:HJK-est-pi}]
	Fix $t_0, u_0, x_0$. 
	Recall that 
	\begin{align*}
		\hat \pi^n_h (t_0, u_0, x_0) & = \int_0^T \frac{1}{n} \sum_{i=1}^n \HJK_h(t_0-t, u_0-\frac{i}{n}, x_0-X^n_i(t)) d X^n_i(t) \\
		& = \int_0^T \frac{1}{n} \sum_{i=1}^n \HJK_h(t_0-t, u_0-\frac{i}{n}, x_0-X^n_i(t)) Y^n_i(t) dt \\
		& \qquad + \int_0^T \frac{1}{n} \sum_{i=1}^n \HJK_h(t_0-t, u_0-\frac{i}{n}, x_0-X^n_i(t)) \sigma(X^n_i(t)) dB_{\frac{i}{n}}(t) \,.
	\end{align*}
	From this we may then write  
	\begin{align*}
		& \E \abs{\pi^n_h(t_0,u_0,x_0) - \pi(t_0,u_0,x_0)}^2 \le \\
		& \qquad 5 \E \left\vert \int_0^T H_{h_1}(t_0-t) \frac{1}{n} \sum_{i=1}^n J_{h_2}(u_0-\frac{i}{n}) \right.  \\
		& \qquad \qquad \qquad  \left. \paren[\big]{ K_{h_3}(x_0 - X^n_i(t)) Y^n_i(t) - K_{h_3}(x_0 - X_{\frac{i}{n}}(t)) Y_{\frac{i}{n}}(t) } dt \right\vert^2 \\
		& \quad + 5 \E \left\vert \int_0^T H_{h_1}(t_0-t) \frac{1}{n} \sum_{i=1}^n J_{h_2}(u_0-\frac{i}{n}) \right. \\ 
		& \qquad \qquad \qquad  \left. \paren[\big]{ K_{h_3}(x_0 - X_{\frac{i}{n}}(t)) Y_{\frac{i}{n}}(t) - \E (K_{h_3}(x_0 - X_{\frac{i}{n}}(t)) Y_{\frac{i}{n}}(t)) } dt \right\vert^2 \\
		& \quad + 5 \E \left\vert \int_0^T H_{h_1}(t_0-t) \int_I \left( J_{h_2}(u_0-\frac{\ceil{nu}}{n}) \E (K_{h_3}(x_0 - X_{\frac{\ceil{nu}}{n}}(t)) Y_{\frac{\ceil{nu}}{n}}(t)) \right. \right. \\
		& \qquad \qquad \qquad  \left. \left. - J_{h_2}(u_0-u) \E (K_{h_3}(x_0-X_u(t)) Y_u(t)) \right) dudt \right\vert^2 \\
		& \quad + 5 \E \abs{ \int_0^T \int_I \int_{\R^d} \HJK_h(t_0-t,u_0-u,x_0-x) \pi(t,u,x) dxdudt - \pi(t_0,u_0,x_0) }^2 \\
		& \quad + 5 \E \abs{ \int_0^T \frac{1}{n} \sum_{i=1}^n H_{h_1}(t_0-t) J_{h_2}(u_0-\frac{i}{n}) K_{h_3}(x_0-X^n_i(t)) \sigma(X^n_i(t)) dB_{\frac{i}{n}}(t) }^2 \\
		& \quad =: 5 (P_1 + P_2 + P_3 + P_4 + P_5) \,.
	\end{align*}
	We do not need to do anything for $P_4$ for now, and $P_5$ will be proved using standard techniques in stochastic analysis at the end. 
	The proof of the rest are bounded in analogously as $M_1$ through $M_3$ in the proof of Lemma~\ref{l:JK-est-mu}.

	\textit{Step 1.}
	Observe that $P_1$ is upper bounded by
	\begin{align*}
		& T \int_0^T H_{h_1}^2(t_0-t) \frac{1}{n} \sum_{i=1}^n J_{h_2}(u_0-\frac{i}{n})^2 \\
		& \qquad \qquad \E \abs{ K_{h_3}(x_0-X^n_i(t)) Y^n_i(t) - K_{h_3}(x_0-X_{\frac{i}{n}}(t)) Y_{\frac{i}{n}}(t) }^2 dt \,.
	\end{align*}
	For each $t$ and $i$, we have 
	\begin{align*}
		& \abs{ K_{h_3}(x_0-X^n_i(t)) Y^n_i(t) - K_{h_3}(x_0-X_{\frac{i}{n}}(t)) Y_{\frac{i}{n}}(t) } \\
		& \le \abs{K_{h_3}(x_0-X^n_i(t)) - K_{h_3}(x_0-X_{\frac{i}{n}}(t)) } \abs{Y^n_i(t)} + K_{h_3}(x_0-X_{\frac{i}{n}}(t)) \abs{Y^n_i(t) - Y_{\frac{i}{n}}(t)} \\
		& \le \norm{\grad K_{h_3}}_\infty \abs{X^n_i(t) - X_{\frac{i}{n}}(t)} \norm{b}_\infty + \norm{K_{h_3}}_\infty \abs{Y^n_i(t) - Y_{\frac{i}{n}}(t)} \,,
	\end{align*}
	so 
	\begin{align*}
		& \E \abs{ K_{h_3}(x_0-X^n_i(t)) Y^n_i(t) - K_{h_3}(x_0-X_{\frac{i}{n}}(t)) Y_{\frac{i}{n}}(t) }^2 \\
		& \le 2 \norm{\grad K_{h_3}}_\infty^2 \norm{b}_\infty^2 \E \abs{X^n_i(t) - X_{\frac{i}{n}}(t)}^2 + 2 \norm{K_{h_3}}_\infty^2 \E \abs{Y^n_i(t) - Y_{\frac{i}{n}}(t)}^2 \,.
	\end{align*}
	Then, with Lemmata~\ref{l:conv-of-system} and~\ref{l:approx-Yu}, we get
	\begin{equation*}
		P_1 \lesssim_{b,H,J,K,T} n^{-1} h_1^{-1} h_2^{-2} h_3^{-2-2d} + T n^{-2} h_3^{-2d} \sum_{i=1}^n J_{h_2}(u_0-u)^2 \,.
	\end{equation*}

	\textit{Step 2.}
	To bound $P_2$, we apply Berstein's inequality. 
	Let 
	\begin{equation*}
		Z_i(t) = J_{h_2}(u_0 - \frac{i}{n}) \paren[\big]{ K_{h_3}(x_0 - X_{\frac{i}{n}}(t)) Y_{\frac{i}{n}}(t) - \E(K_{h_3}(x_0 - X_{\frac{i}{n}}(t)) Y_{\frac{i}{n}}(t))  } \,.
	\end{equation*}
	Then  
	\begin{align*}
		P_2 & \le T\int_0^T H_{h_1}(t_0-t)^2 \E \abs{\frac{1}{n} \sum_{i=1}^n Z_i(t)}^2 dt \\
		& = T \int_0^T H_{h_1}(t_0-t)^2 \int_0^\infty \P \left( \abs{ \sum_{i=1}^n Z_i(t) } > n \sqrt{z} \right) dz dt \,.
	\end{align*}

	Observe that $\E Z_i(t) = 0$, and $\abs{Z_i(t)} \le 2 \norm{\JK_h}_\infty \norm{b}_\infty$ for every $i = 1, \dots, n$, for every $t$. 
	Also, every $Z_i(t)$ is a function of $X_{\frac{i}{n}}(t)$, which makes them independent of each other among $i = 1, \dots, n$. 
	So we may apply Bernstein's inequality and inequality (48) in~\cite{MaestraHoffmann2022} to obtain 
	\begin{align*}
		\P \left( \abs{ \sum_{i=1}^n Z_i(t) } > n \sqrt{z} \right) & \le 2 \exp \left( - \frac{\frac{1}{2}n^2 z}{\sum_{i=1}^n \E \abs{Z_i(t)}^2 + \frac{n \sqrt{z}}{3} 2 \norm{\JK_h}_\infty \norm{b}_\infty} \right) \\
		& \lesssim \max \left\{ 2 n^{-2} \sum_{i=1}^n \E \abs{Z_i(t)}^2 , \frac{16}{9} n^{-2} \norm{\JK_h}_\infty^2 \norm{b}_\infty^2 \right\} \,.
	\end{align*}
	Further notice that 
	\begin{equation*}
		\E \abs{Z_i(t)}^2 \le \norm{b}_\infty^2 J_{h_2}(u_0-\frac{i}{n})^2 \norm{K_{h_3}(x_0-\cdot)}_{L^2(\mu_{t,\frac{i}{n}})}^2 \,.
	\end{equation*}
	Thus 
	\begin{align*}
		P_2 & \lesssim T n^{-2} \norm{b}_\infty^2 \int_0^T H_{h_1}(t_0-t)^2 \sum_{i=1}^n J_{h_2}(u_0-\frac{i}{n})^2 \norm{K_{h_3}(x_0-\cdot)}_{L^2(\mu_{t,\frac{i}{n}})}^2 \\
		& \qquad + T n^{-2} h_1^{-1} h_2^{-2} h_3^{-2d} \norm{b}_\infty^2 \norm{H}_2^2 \norm{J}_\infty^2 \norm{K}_\infty^2 \,.
	\end{align*}

	\textit{Step 3.}
	The idea for bounding $P_3$ is analogous to that of $M_3$ in the proof of Lemma~\ref{l:JK-est-mu}, which uses the stability of the graphon mean-field system.
	Observe that 
	\begin{equation*}
		P_3 \le T \int_0^T H_{h_1}(t_0-t)^2 4 h_2 \int_I \abs{ \E P_3(t,u) }^2 du dt \,,
	\end{equation*}
	where 
	\begin{equation*}
		P_3(t,u) = J_{h_2}(u_0 - \frac{\ceil{nu}}{n})  K_{h_3}(x_0-X_{\frac{\ceil{nu}}{n}}(t)) Y_{\frac{\ceil{nu}}{n}}(t)  - J_{h_2}(u_0-u) K_{h_3}(x_0-X_u(t)) Y_u(t) \,.
	\end{equation*}
	Note that 
	\begin{align*}
		\abs{P_3(t,u)}^2 & \le 2 \abs{ J_{h_2}(u_0 - \frac{\ceil{nu}}{n}) - J_{h_2}(u_0-u) }^2 K_{h_3}(x_0 - X_{\frac{\ceil{nu}}{n}}(t))^2 \abs{Y_{\frac{\ceil{nu}}{n}}(t)}^2 \\
		& \qquad + 2 J_{h_2}(u_0-u)^2 \abs{ K_{h_3}(x_0-X_{\frac{\ceil{nu}}{n}}(t)) Y_{\frac{\ceil{nu}}{n}}(t) - K_{h_3}(x_0-X_u(t)) Y_u(t) }^2 \\
		& \le 2 \norm{\grad J_{h_2}}_\infty^2 \abs{ \frac{\ceil{nu}}{n} - u }^2 K_{h_3}(x_0 - X_{\frac{\ceil{nu}}{n}}(t))^2 \norm{b}_\infty^2 \\
		& \qquad + 4 J_{h_2}(u_0-u)^2 K_{h_3}(x_0 - X_{\frac{\ceil{nu}}{n}}(t))^2 \abs{ Y_{\frac{\ceil{nu}}{n}}(t) - Y_u(t) }^2 \\
		& \qquad + 4 J_{h_2}(u_0-u)^2 \abs{ K_{h_3}(x_0 - X_{\frac{\ceil{nu}}{n}}(t)) - K_{h_3}(x_0 - X_{u}(t)) }^2 \abs{Y_u(t)}^2 \\
		& \le 2 n^{-2} h_2^{-2} \norm{\grad J}_\infty^2 \norm{b}_\infty^2 K_{h_3}(x_0 - X_{\frac{\ceil{nu}}{n}}(t))^2 \\
		& \qquad + 4 J_{h_2}(u_0-u)^2 h_3^{-2d} \norm{K}_\infty^2 \abs{ Y_{\frac{\ceil{nu}}{n}}(t) - Y_u(t) }^2 \\
		& \qquad + 4 J_{h_2}(u_0-u)^2 h_3^{-2-2d} \norm{\grad K}_\infty^2 \abs{ X_{\frac{\ceil{nu}}{n}}(t) - X_u(t) }^2 \norm{b}_\infty^2 \,.
	\end{align*}
	Then
	\begin{align*}
		\E \abs{P_3(t,u)}^2 & \le 2 n^{-2} h_2^{-4} \norm{\grad J}_\infty^2 \norm{b}_\infty^2 \norm{K_{h_3}(x_0-\cdot)}_{L^2(\mu_{t,\frac{\ceil{nu}}{n}})}^2 \\
		& \quad + 4 n^{-2} h_3^{-2d} \norm{K}_\infty^2 J_{h_2}(u_0-u)^2 + 4 n^{-2} h_3^{-2-2d} \norm{\grad K}_\infty^2 \norm{b}_\infty^2 J_{h_2}(u_0-u)^2 \,.
	\end{align*}
	Integrating those produces
	\begin{align*}
		P_3 & \lesssim T n^{-2} h_2^{-2} \norm{\grad J}_\infty^2 \norm{b}_\infty^2 \int_0^T H_{h_1}(t_0-t)^2 \int_I \norm{K_{h_3}(x_0-\cdot)}_{L^2(\mu_{t,\frac{\ceil{nu}}{n}})}^2 du dt \\
		& \qquad + T n^{-2} h_1^{-1} h_3^{-2-2d} \norm{H}_2^2 \norm{J}_2^2 \norm{\grad K}_\infty^2 \norm{b}_\infty^2  \\
		& \qquad + T n^{-2} h_1^{-1} h_3^{-2d} \norm{H}_2^2 \norm{J}_2^2 \norm{K}_\infty^2 \,.
	\end{align*}

	\textit{Step 4.}
	For $P_5$, notice that $\{B_{\frac{i}{n}} \st i = 1, \dots, n\}$ are distinct independent Brownian motions. 
	Then we apply It\^o's isometry to see that 
	\begin{align*}
		P_5 & = \frac{1}{n^2} \E \abs{ \int_0^T \sum_{i=1}^n \HJK_h (t_0-t, u_0-\frac{i}{n}, x_0-X^n_i(t)) \sigma(X^n_i(t)) dB_{\frac{i}{n}}(t) }^2 \\
		& = \frac{d}{n^2} \E \left( \int_0^T \sum_{i=1}^n \HJK_h(t_0-t, u_0-\frac{i}{n}, x_0-X^n_i(t))^2 \tr(\sigma \sigma^T) (X^n_i(t)) dt \right) \\
		& \le \frac{\sigma_+^2 d^2}{n^2} \sum_{i=1}^n \int_0^T \E  \HJK_h(t_0-t, u_0-\frac{i}{n}, x_0-X^n_i(t))^2 \\
		& \lesssim T d^2 \sigma_+^2 n^{-1} h_1^{-2} h_2^{-2} h_3^{-2d} \,.
	\end{align*}
	Adding all the above bounds finishes the proof. 
\end{proof}

\begin{proof}[Proof of Corollary~\ref{c:err-bound-beta-overall}]
	Recall that $\hat \beta^n_{h,\kappa,r} = \hat \beta^n_{h,\kappa} \one{\{\abs{x} \le r\}}$. 
	We break the integral into two parts
	\begin{align}
		\nonumber
		& \int_{\tau_1}^{\tau_2} \int_I \int_{\R^d} \E \abs{ \hat \beta^n_{h,\kappa,r}(t,u,x) - \beta(t,u,x) }^2 \\
		\label{e:telescope-beta-L2}
		& \quad = \int_{\tau_1}^{\tau_2} \int_I \int_{\{\abs{x} \le r\}} \E \abs{ \hat \beta^n_{h,\kappa}(t,u,x) - \beta(t,u,x) }^2 + \int_{\tau_1}^{\tau_2} \int_I \int_{\{\abs{x}>r\}} \abs{\beta(t,u,x) }^2 dx du dt \,.
	\end{align}

	\textit{Step 1.}
	The convergence of the second part is due to the $L^2$-integrability of $\beta$. 
	More precisely, recall that 
	\begin{equation*}
		\beta(t,u,x) = \int_I \int_{\R^d} b(x,y) G(u,v) \mu_{t,v}(dy) dv \,,
	\end{equation*}
	where $b(x,y) = F(x-y) + V(x)$ with $F, V \in L^1 \cap L^2 \cap L^\infty$. 
	Then 
	\begin{equation*}
		\abs{\beta(t,u,x)}^2 \le 2 \abs{V(x)}^2 + 2 \int_{\R^d} \abs{F(x-y)}^2 \mu_{t,v}(dy) \,,
	\end{equation*}
	so that 
	\begin{equation*}
		\int_{\tau_1}^{\tau_2} \int_I \int_{\R^d} \abs{\beta(t,u,x)}^2 \le 2T (\norm{V}_2^2 + \norm{F}_2^2) < \infty\,.
	\end{equation*}
	Thus by dominated convergence, we have 
	\begin{equation*}
		\theta_{2,\beta}(r) \defeq \int_{\tau_1}^{\tau_2} \int_I \int_{\{\abs{x}>r\}} \abs{\beta(t,u,x) }^2 dx du dt \to 0 
	\end{equation*}
	as $r \to \infty$. 

	\textit{Step 2.}
	We now look at the first part of~\eqref{e:telescope-beta-L2}. 
	Recalling that $\hat \beta^n_{h,\kappa} = \frac{\hat \pi^n_h}{\hat \mu^n_h \lor \kappa_2}$, we obtain
	\begin{align*}
		& \abs{\hat \beta^n_{h, \kappa} (t, u, x) - \beta (t, u, x)}^2 \lesssim \\
		& \qquad \kappa_2^{-2} \left( \abs{\hat \pi^n_h(t, u, x) - \pi(t, u, x)}^2 + \norm{b}_\infty^2 \abs{\hat \mu^n_h (t, u, x) - \mu(t, u, x)}^2 \right) 
	\end{align*}
	whenever $0 < \kappa_2 < \mu(t,u,x)$. 
	Note that $\mu$ has a strictly positive lower bound over $[\tau_1, \tau_2] \times I \times B(0,r)$ thanks to Harnack's inequality (see for instance Corollary 8.2.2 in~\cite{BogachevKrylovRocknerShaposhnikov2015}).
	This allows us to choose a strictly positive $\kappa_2$ depending on $r$. 
	We may set without loss of generality $\kappa_2 = \kappa_2(r)$ decreasing as $r$ increases. 

	We already have an upper bound of 
	\begin{equation*}
		\int_{\tau_1}^{\tau_2} \int_I \int_{\{\abs{x} \le r\}} \E \abs{\hat \mu^n_h (t, u, x) - \mu(t, u, x)}^2 dx du dt 
	\end{equation*}
	from Corollary~\ref{c:err-bound-mu-overall}.
	It remains to look at the errors of $\hat \pi$.  

	For the estimates of $\pi$, we rearrange and combine the terms in the upper bound given in Lemma~\ref{l:HJK-est-pi} to see that 
	\begin{align*}
		& \E \abs{\hat \pi^n_h(t, u, x) - \pi(t, u, x)}^2 \lesssim_{T,b,H,J,K} \\
		& \qquad n^{-1} h_1^{-1} h_2^{-2} h_3^{-2-2d} + n^{-1} h_1^{-2} h_2^{-2} h_3^{-2d} \\
		& \quad + n^{-2} h_1^{-1} h_3^{-2d} \sum_{i=1}^n J_{h_2}(u-\frac{i}{n})^2 \\
		& \quad + n^{-2} \int_0^T H_{h_1}^2(t - s) \sum_{i=1}^{n} J_{h_2}^2(u - \frac{i}{n}) \norm{K_{h_3}(x - \cdot)}_{L^2(\mu_{s, \frac{i}{n}})}^2 ds \\
		& \quad + n^{-3} h_2^{-2} \int_0^T H_{h_1}^2(t - s) \sum_{i=1}^{n} \norm{K_{h_3}(x - \cdot)}_{L^2(\mu_{s, \frac{i}{n}})}^2 ds \\
		& \quad + \abs{\HJK_h \ast \pi (t, u, x) - \pi(t, u, x)}^2 \,.
	\end{align*}
	Analogous to the proof of Corollary~\ref{c:err-bound-mu-overall}, integrating those items over $[\tau_1, \tau_2] \times I \times \{\abs{x}\le r\}$ produces
	\begin{align*}
		& \int_{\tau_1}^{\tau_2} \int_I \int_{\{\abs{x} \le r\}} \E \abs{\hat \pi^n_h(t, u, x) - \pi(t, u, x)}^2 \lesssim_{T,b,H,J,K} \\
		& \qquad r^d (n^{-1} h_1^{-1} h_2^{-2} h_3^{-2-2d} + n^{-1} h_1^{-2} h_2^{-2} h_3^{-2d}) \\
		& \quad + n^{-1} h_1^{-1} h_2^{-1} h_3^{-d} + n^{-2} h_1^{-1} h_2^{-2} h_3^{-d} \\
		& \quad + \norm{\HJK_h \ast \pi - \pi}_{L^2([\tau_1, \tau_2] \times I \times \R^d)}^2 \,.
	\end{align*}
	Recall that $\pi = \mu \beta$, where $\abs{\beta} \le \norm{b}_\infty$.  
	Using the same idea attaining~\eqref{e:JK-conv-L2}, we see that
	\begin{equation*}
		\lim_{h_1,h_2,h_3 \to 0} \norm{\HJK_h \ast \pi - \pi}_{L^2([\tau_1, \tau_2] \times I \times \R^d)}^2 = 0 \,,
	\end{equation*}
	and we denote the convergence rate by $\theta_{3,\pi}(h)$. 

	Therefore, joining all the items, we obtain an overall upper bound 
	\begin{align*}
		& \int_{\tau_1}^{\tau_2} \int_{I} \int_{\R^d} \E \abs{\hat \beta^n_{h,\kappa,r} (t, u, x) - \beta (t, u, x)}^2 dx du dt \lesssim \\
		& \qquad  \kappa_2(r)^{-2} \paren{ n^{-1} h_1^{-1} h_2^{-1} h_3^{-d} + n^{-2} h_1^{-1} h_2^{-2} h_3^{-d} } \\
		& \quad + \kappa_2(r)^{-2} r^{d} (n^{-1} h_1^{-2} h_2^{-2} h_3^{-2d} + n^{-1} h_1^{-1} h_2^{-2} h_3^{-2-2d}) \\
		& \quad + \kappa_2(r)^{-2} (\theta_{3,\mu}(h) + \theta_{3,\pi}(h)) + \theta_{2, \beta}(r) \,,
	\end{align*}
	finishing the proof. 
\end{proof}

\section{Proofs for Section~\ref{s:minimax}}\label{s:proof-minimax}

The main improvement in the estimations in Lemmata~\ref{l:improve-est-mu} and~\ref{l:improve-est-beta} over Lemmata~\ref{l:JK-est-mu} and~\ref{l:HJK-est-pi} is the elimination of the step of inequality~\eqref{e:telescoping}. 
At a given point $(t_0,u_0,x_0)$, we are able to remove a heavy error term by simply sacrificing a constant multiple (depending on the point $(t_0,u_0,x_0)$). 
This requires a change-of-measure argument thanks to Girsanov's theorem, and the analysis of the constant multiple follows from Proposition 19 of~\cite{MaestraHoffmann2022}.

Recall that the finite-population system has the following dynamics
\begin{equation*}
	d X^n_i(t) = \frac{1}{n} \sum_{j=1}^n g^n_{ij} b(X^n_i(t), X^n_j(t)) dt + \sigma (X^n_i(t)) d B_{\frac{i}{n}}(t) \,,
\end{equation*}
for $i = 1, \dots, n$. 
We define 
\begin{equation*}
	\bar B^n_i(t) = \int_0^t (\sigma \sigma^T)^{-1/2} (X^n_i(s)) (dX^n_i(s) - \beta(s,\frac{i}{n},X^n_i(s))ds) 
\end{equation*}
for $i = 1, \dots, n$, and $t \in [0, T]$. 
Then 
\begin{equation*}
	d X^n_i(t) = \beta (t, \frac{i}{n}, X^n_i(t)) dt + \sigma(X^n_i(t)) d \bar B^n_i(t) \,, \qquad i = 1, \dots, n \,.
\end{equation*}

Let $\bar M^n$ be the process 
\begin{equation*}
	\bar M^n_t = \sum_{i=1}^n \int_0^t \left( \frac{1}{n} \sum_{j=1}^n g^n_{ij} b(X^n_i(s), X^n_j(s)) - \beta (s, \frac{i}{n}, X^n_i(s)) \right)^T (\sigma \sigma^T)^{-1/2} (X^n_i(s)) d \bar B^n_i(s) \,.
\end{equation*}
Define a new probability measure $\bar \P$ via 
\begin{equation*}
	\frac{d \bar \P}{d \P} = \exp \left( \bar M^n_T - \frac{1}{2} \ip{\bar M^n}_T \right) \,,
\end{equation*}
where $\ip{\cdot}$ denotes the quadratic variation. 
Observe that $\{\bar B^n_i \st i = 1, \dots, n \}$ are independent $\bar \P$-Brownian motions, and that $\bar M$ is a $\bar \P$-martingale. 
So $\{X^n_i \st i = 1, \dots, n \}$ are independent under $\bar \P$, and the $\bar \P$-law of $X^n_i$ coincides with $\P$-law of $X_{\frac{i}{n}}$, respectively for every $i = 1, \dots, n$. 
A slight modification on Proposition 19 of~\cite{MaestraHoffmann2022} gives the following relation.  
\begin{lemma}
	\label{l:change-of-measure}
	There exist constants $C,a > 0$ such that,
	for any $\f_T$-measurable event $E$, we have
	\begin{equation*}
		\P (E) \le C (\bar \P(E))^a \,.
	\end{equation*}
	Here $\f_T$ is the $\sigma$-algebra generated by the Brownian motions $\{B_u(t)\}_{t \in [0,T], u \in I}$.
\end{lemma}

Now we have the tools to complete the proof of the improved estimations and thus the minimax analysis.

\subsection{Proof of Theorem~\ref{t:minimax-mu}}\label{s:minimax-mu-prf}

We first justify the improved upper bound. 

\begin{proof}[Proof of Lemma~\ref{l:improve-est-mu}]
	Observe that 
	\begin{align*}
		& \E \abs{ \hat \mu^n_h (t_0,u_0,x_0) - \mu(t_0,u_0,x_0) }^2  \\
		& \le 3 \E \abs{ \frac{1}{n} \sum_{i=1}^n J_{h_2}(u_0-\frac{i}{n}) \paren[\big]{ K_{h_3}(x_0 - X^n_i(t_0)) - \bar \E (K_{h_3}(x_0 - X^n_i(t_0))) } }^2 \\
		& + 3 \E \abs{ \int_I J_{h_2}(u_0-\frac{\ceil{nu}}{n}) \bar \E (K_{h_3}(x_0 - X^n_i(t_0))) - J_{h_2}(u_0-u) \E (K_{h_3}(x_0 - X_u(t_0))) du }^2 \\
		& + 3 \abs{ \JK_h \ast \mu_{t_0} (u_0,x_0) - \mu(t_0,u_0,x_0) } \\
		& =: 3 (M'_1 + M'_2 + M'_3) \,.
	\end{align*}
	For $i = 1, \dots, n$, let
	\begin{equation*}
		\bar Z_i = J_{h_2}(u_0-\frac{i}{n}) \paren[\big]{ K_{h_3}(x_0 - X^n_i(t_0)) - \bar \E (K_{h_3}(x_0 - X^n_i(t_0))) } \,.
	\end{equation*}
	Note that $\bar Z_i = 0$ whenever $\abs{u_0 - \frac{i}{n}} > h_2$, so the number of nonzero terms in the summation is $O(nh_2)$. 
	
	The main improvement upon Lemma~\ref{l:JK-est-mu} comes from the upper bound of $M'_1$ via the change-of-measure argument. 
	Following the same strategy as in the proof of Lemma~\ref{l:JK-est-mu}, we have 
	\begin{align*}
		M'_1 & = \E \abs{ \frac{1}{n} \sum_{i=1}^n \bar Z_i }^2 \\
		& = \int_0^\infty \P \left( \abs{ \sum_{i=1}^n \bar Z_i } > n \sqrt{z} \right) dx \\
		& \le C \int_0^\infty \bar \P \left( \abs{ \sum_{i=1}^n \bar Z_i } > n \sqrt{z} \right)^a dz \,.
	\end{align*}
	Recall that $\{X^n_i \st i = 1, \dots, n\}$ are independent under $\bar \P$. 
	Then so are $\{\bar Z_i \st i = 1, \dots, n\}$. 
	Moreover, we have $\bar \E \bar Z_i = 0$ and $\abs{\bar Z_i} \le 2 \norm{\JK_h}_\infty$ a.s. 
	We may thus apply Bernstein's inequality, 
	\begin{equation*}
		\bar \P \left( \abs{ \sum_{i=1}^n \bar Z_i } > n \sqrt{z} \right) \le 2 \exp \left( - \frac{\frac{1}{2} n^2 z}{ \sum_{i=1}^n \E \bar Z_i^2 + \frac{1}{3} n \sqrt{z} \norm{\JK_h}_\infty  } \right) \,.
	\end{equation*}
	For index $i$ such that $\abs{u_0 - \frac{i}{n}} \le h_2$, we have 
	\begin{align*}
		\E Z_i^2 & \le J_{h_2}(u_0-\frac{i}{n})^2 \bar \E (K_{h_3}(x_0-X^n_i(t_0))^2) \\ 
		& = J_{h_2}(u_0-\frac{i}{n})^2 \E (K_{h_3}(x_0-X_{\frac{i}{n}}(t_0))^2) \\ 
		& \le h_2^{-2} \norm{J}_\infty^2 \int_{\R^d} K_{h_3} (x_0 - x)^2 \mu_{t_0,\frac{i}{n}}(dx) \\
		& \le C_\mu h_2^{-2} h_3^{-d} \norm{J}_\infty^2 \norm{K}_2^2 \,,
	\end{align*}
	thanks to the local boundedness of $\mu_{t_0,\frac{i}{n}}$, in a neighborhood of $x_0$. 
	Using estimate (48) in~\cite{MaestraHoffmann2022}, we get 
	\begin{align*}
		& \int_0^\infty \bar \P \left( \abs{ \sum_{i=1}^n \bar Z_i } > n \sqrt{z} \right)^a dz \\
		& \quad \le 2 \max \left\{ \frac{2 C_\mu n h_2^{-1} h_3^{-d} \norm{J}_\infty^2 \norm{K}_2^2}{a n^2}, \paren[\big]{ \frac{2n h_2^{-1} h_3^{-d} \norm{\JK}_\infty}{3 a n^2} }^2 \right\} \\
		& \quad \le C (n^{-1} h_2^{-1} h_3^{-d} \norm{J}_\infty^2 \norm{K}_2^2 + n^{-2} h_2^{-2} h_3^{-2d} \norm{J}_\infty^2 \norm{K}_\infty^2 ) \,.
	\end{align*}

	The term $M'_2$ is where items (1) and (2) of Lemma~\ref{l:improve-est-mu} become different. 
	We first work on item (1).
	Recall that $J_{h_2}$ is supported on $\overline{B(0,h_2)}$.
	Then Cauchy-Schwarz inequality gives
	\begin{align*} 
		& M'_2 \le  \\ 
		& \; h_2 \int_I \abs{ J_{h_2}(u_0-\frac{\ceil{nu}}{n}) \bar \E (K_{h_3}(x_0 - X^n_{\ceil{nu}}(t_0))) - J_{h_2}(u_0-u) \E (K_{h_3}(x_0 - X_u(t_0))) }^2 du \,.
	\end{align*} 
	For each $u \in [u_0-h_2,u_0+h_2]$, since the $\bar \P$-law of $X^n_{\ceil{nu}}$ is identical to the $\P$-law of $X_{\frac{\ceil{nu}}{n}}$, we have 
	\begin{align*}
		& J_{h_2}(u_0-\frac{\ceil{nu}}{n}) \bar \E (K_{h_3}(x_0 - X^n_{\ceil{nu}}(t_0))) - J_{h_2}(u_0-u) \E (K_{h_3}(x_0 - X_u(t_0))) \\
		& = J_{h_2}(u_0-\frac{\ceil{nu}}{n}) \left( \E (K_{h_3}(x_0 - X_{\frac{\ceil{nu}}{n}}(t_0))) - \E (K_{h_3}(x_0 - X_u(t_0))) \right) \\
		& + \left( J_{h_2}(u_0 - \frac{\ceil{nu}}{n}) - J_{h_2}(u_0-u) \right) \E (K_{h_3}(x_0 - X_u(t_0))) \,.
	\end{align*}
	So 
	\begin{align*}
		& \abs{ J_{h_2}(u_0-\frac{\ceil{nu}}{n}) \bar \E (K_{h_3}(x_0 - X^n_{\ceil{nu}}(t_0))) - J_{h_2}(u_0-u) \E (K_{h_3}(x_0 - X_u(t_0))) }^2  \\
		& \quad \le 2 J_{h_2}(u_0-\frac{\ceil{nu}}{n})^2 \E \abs{ K_{h_3}(x_0 - X_{\frac{\ceil{nu}}{n}}(t_0)) - K_{h_3}(x_0 - X_{u}(t_0)) }^2 \\
		& \qquad + 2 \abs{J_{h_2}(u_0 - \frac{\ceil{nu}}{n}) - J_{h_2}(u_0-u)}^2 \E (K_{h_3}(x_0 - X_u(t_0))^2) \\
		& \quad \le 2 h_2^{-2} \norm{J}_\infty^2 h_3^{-2-2d} \norm{K}_\infty^2 \E \abs{ X_{\frac{\ceil{nu}}{n}}(t_0) - X_u(t_0) }^2 \\
		& \qquad + 2 h_2^{-4} n^{-2} \norm{\grad J}_\infty^2 C_\mu h_3^{-d} \norm{K}_2^2 \\
		& \quad \le C (n^{-2} h_2^{-2} h_3^{-2-2d} + n^{-2} h_2^{-4} h_3^{-d}) \,,
	\end{align*}
	where the last inequality uses Theorem 2.1 of~\cite{BayraktarChakrabortyWu2023}. 

	Integrating the above errors, we obtain 
	\begin{equation*}
		M'_2 \le C n^{-2} ( h_3^{-2-2d} \norm{J}_\infty^2 \norm{\grad K}_\infty^2 + h_2^{-2} h_3^{-d} \norm{\grad J}_\infty^2 \norm{K}_2^2 ) \,.
	\end{equation*}
	That finishes the proof of item (1). 

	Looking at the proof of item (1), we notice that the only difference in item (2) compared to item (1) happens at the term 
	\begin{equation*}
		\E \abs{ K_{h_3}(x_0 - X_{\frac{\ceil{nu}}{n}}(t_0)) - K_{h_3}(x_0 - X_u(t_0)) }^2 \,.
	\end{equation*}
	The previous (crude) analysis (in the proof of Lemma~\ref{l:JK-est-mu}) gives an upper bound $O(n^{-2} h_3^{-2-2d})$ simply by mean-value theorem. 
	However, the use of mean-value theorem is activated only when $\abs{x_0 - X_{\frac{\ceil{nu}}{n}}(t_0)} \le h_3$ and $\abs{x_0 - X_{u}(t_0)} \le h_3$. 
	Given the local boundedness of $\mu$, we have 
	\begin{equation*}
		\P(A(t_0,u,x_0)) \defeq \P \left( \abs{x_0 - X_{\frac{\ceil{nu}}{n}}(t_0)} \le h_3 \text{ or } \abs{x_0 - X_{u}(t_0)} \le h_3 \right) \le 2 C_\mu' h_3^d
	\end{equation*}
	for some constant $C_\mu'$. 

	Now, with the additional assumption on the continuity of initial data with respect to the $p$-Wasserstein metric, we adjust the proof of Theorem 2.1(b) in~\cite{BayraktarChakrabortyWu2023} to see that 
	\begin{equation*}
		\E \abs{ X_{\frac{\ceil{nu}}{n}}(t_0) - X_u(t_0) }^p \le C_p' n^{-p} \,.
	\end{equation*}
	Then, by Hölder's inequality, we get 
	\begin{align*}
		& \E \abs{ K_{h_3}(x_0 - X_{\frac{\ceil{nu}}{n}}(t_0)) - K_{h_3}(x_0 - X_u(t_0)) }^2 \\
		& \le \E \left( \norm{\grad K_{h_3}}_\infty^2 \abs{ X_{\frac{\ceil{nu}}{n}}(t_0) - X_u(t_0) }^2 \one{A(t_0,u,x_0)} \right) \\
		& \le h_3^{-2-2d} \norm{\grad K}_\infty^2 \left( \abs{ X_{\frac{\ceil{nu}}{n}}(t_0) - X_u(t_0) }^p \right)^{\frac{2}{p}} \P(A(t_0,u_0,x_0))^{\frac{p-2}{p}} \\
		& \le C_p' n^{-p} h_3^{-2-\frac{p+2}{p}d} \norm{\grad K}_\infty^2 \,.
	\end{align*}
	Note that $C_p'$ is independent of $u$. 
	That finishes the proof of item (2). 
\end{proof}

It remains to analyze the bias term.
Fix $t_0 \in (0, T)$, $u_0 \in (0, 1)$, and $x_0 \in \R^d$. 
When $h_2 < u_0$, we have 
\begin{align*}
	& \JK_h \ast \mu(t_0, u_0, x_0) - \mu(t_0, u_0, x_0) = \\
	& \qquad  \int_\R \int_{\R^d} J_{h_2}(u_0-u) K_{h_3}(x_0-x) (\mu(t_0,u,x) - \mu(t_0,u_0,x_0)) dxdu \,.
\end{align*}
For $u \in I$ and $x \in \R^d$ such that $\abs{u_0 - u} < h_2$ and $\abs{x_0 - x} < h_3$, we have 
\begin{align*}
	& \mu(t_0,u,x) - \mu(t_0,u_0,x_0) = \\
	& \qquad (\mu(t_0,u,x) - \mu(t_0,u_0,x)) + (\mu(t_0,u_0,x) - \mu(t_0,u_0,x_0)) \,.
\end{align*}
The second term has order $O(\abs{h_3}^{s})$ due to the selection of Hölder continuity class. 
We will bound the first term with the following technical lemma. 

\begin{lemma}
	\label{l:lip-cont-uv}
	Assume the hypothesis of 
	There exists some constant $C_I > 0$, depending only on $T,d,b,\sigma$, such that 
	\begin{equation*}
		\abs{ \mu(t,u,x) - \mu(t,v,x) } \le C_I \abs{u-v} \qquad dx\text{-a.s.}
	\end{equation*}
	for every $u,v \in I$ and every $t \in [0,T]$. 
\end{lemma}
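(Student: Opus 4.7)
\emph{Plan.} I would view $\eta_{uv}(t,x) \defeq \mu(t,u,x) - \mu(t,v,x)$ as the solution of a non-homogeneous linear Fokker-Planck equation and estimate it via Duhamel's formula together with Aronson-type Gaussian bounds on the fundamental solution. Subtracting the Fokker-Planck equation for $\mu_{t,u}$ from that for $\mu_{t,v}$ recorded in Section~\ref{s:minimax} gives
\begin{equation*}
\partial_t \eta_{uv} = -\nabla \cdot (\beta(t,u,\cdot)\, \eta_{uv}) + \tfrac{1}{2} \sum_{i,j} \partial_{ij}^2 ((\sigma \sigma^T)_{ij}\, \eta_{uv}) - \nabla \cdot \bigl( (\beta(t,u,\cdot) - \beta(t,v,\cdot))\, \mu_{t,v} \bigr) \,,
\end{equation*}
with initial datum $\eta_{uv}(0,\cdot) = \mu_{0,u} - \mu_{0,v}$, whose absolute value is bounded by $\rho_I |u-v|$ almost everywhere by Condition~\ref{cd:init-data}(3). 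Under Conditions~\ref{cd:coeffs}(1)(3)(4), the operator $\mathcal{L}^{u,*} \defeq -\nabla\cdot(\beta(t,u,\cdot)\,\cdot) + \tfrac{1}{2} \sum_{ij} \partial_{ij}^2((\sigma\sigma^T)_{ij}\,\cdot)$ is uniformly parabolic with bounded coefficients, so its fundamental solution $p^u(s,y;t,x)$ satisfies Aronson upper bounds on both $p^u$ and $\nabla_y p^u$.

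Duhamel's formula together with one integration by parts in $y$ then yields
\begin{equation*}
\eta_{uv}(t,x) = \int p^u(0,y;t,x)\, \eta_{uv}(0,y)\, dy + \int_0^t \int \nabla_y p^u(s,y;t,x) \cdot (\beta(s,u,y) - \beta(s,v,y))\, \mu_{s,v}(y)\, dy\, ds \,.
\end{equation*}
For the first summand, the Aronson bound $p^u(0,y;t,x) \lesssim t^{-d/2} e^{-c|x-y|^2/t}$ together with $|\eta_{uv}(0,\cdot)| \le \rho_I |u-v|$ and $\rho_I \in L^\infty$ delivers the pointwise estimate $\le C \norm{\rho_I}_\infty |u-v|$ via Gaussian convolution. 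For the source term, the McKean-Vlasov / graphon structure of Conditions~\ref{cd:coeffs}(2) and~\ref{cd:graphon}(2) gives $|\beta(s,u,y) - \beta(s,v,y)| \le C_{g,b}\, |u-v|$ uniformly in $(s,y)$, because $g$ is globally Lipschitz and $b$ is bounded. Combining this with the gradient Aronson bound $|\nabla_y p^u(s,y;t,x)| \lesssim (t-s)^{-(d+1)/2} e^{-c|x-y|^2/(t-s)}$ and local $L^\infty$-control of $\mu_{s,v}$ (afforded by Harnack's inequality together with Proposition~\ref{p:mu-L2-bound} far from the origin), Gaussian convolution collapses the inner $y$-integral to $\lesssim \norm{\mu_{s,v}}_{L^\infty_{\mathrm{loc}}}\, (t-s)^{-1/2}$.

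The main obstacle is then to integrate the weight $\norm{\mu_{s,v}}_\infty (t-s)^{-1/2}$ over $s \in (0,t)$ uniformly in $t$. The $(t-s)^{-1/2}$ singularity is integrable on its own, but $\norm{\mu_{s,v}}_\infty$ may blow up as $s \to 0^+$ if $\mu_{0,v}$ is not $L^\infty$. To handle this I would use the heat-kernel representation $\mu_{s,v}(y) = \int p^v(0,z;s,y) \mu_{0,v}(z)\, dz$: combined with the Aronson upper bound and the Gaussian moment condition~\ref{cd:init-data}(1), this confines the short-time blow-up to at most $s^{-d/2}$, after which splitting $[0,t]$ into a short regularising window $[0,\varepsilon]$ (absorbed by the Gaussian factor $e^{-c|x-y|^2/(t-s)}$ together with the moment bound) and a bulk part $[\varepsilon,t]$ (on which $\norm{\mu_{s,v}}_\infty$ is uniformly bounded) yields an integral bound depending only on $T, d, b, \sigma$. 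Adding the two contributions produces the announced pointwise estimate $|\eta_{uv}(t,x)| \le C_I |u-v|$ almost surely in $x$, uniformly in $t \in [0,T]$ and $u, v \in I$.
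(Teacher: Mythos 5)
Your route is genuinely different from the paper's. The paper starts from the same equation for $\mu_{t,u}-\mu_{t,v}$ and the same uniform bound $\abs{\beta_{t,u}-\beta_{t,v}}\lesssim \norm{b}_\infty\abs{u-v}$, but instead of Duhamel it (i) absorbs the source term $-\nabla\cdot(\mu_{t,v}(\beta_{t,u}-\beta_{t,v}))$ into a constant $C\abs{u-v}$, (ii) invokes the parabolic comparison principle to dominate $\mu_{t,u}-\mu_{t,v}$ by $\varphi_t+Ct\abs{u-v}$, where $\varphi$ solves the homogeneous transport--diffusion equation with datum $\mu_{0,u}-\mu_{0,v}$, and (iii) bounds $\varphi$ by $\norm{\varphi_0}_\infty e^{t\norm{\nabla_x\cdot b}_\infty}$ via time reversal and Feynman--Kac, which is exactly where Condition~\ref{cd:init-data}(3) enters. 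Your parametrix/Duhamel argument is more flexible in $\sigma$ (the paper restricts to $\sigma=\sigma_0 I_{d\times d}$ ``for simplicity'') and avoids the comparison-principle step; note, however, that the gradient bound on $\nabla_y p^u$ is not part of Aronson's theory for merely bounded measurable coefficients --- you need the Lipschitz continuity of $\sigma\sigma^T$ and of $\beta(t,u,\cdot)$ supplied by Condition~\ref{cd:coeffs}, so you should cite a parametrix-type estimate rather than Aronson for that piece. Conversely, the paper's Feynman--Kac step needs only $\nabla_x\cdot b\in L^\infty$ and yields the constant $e^{T\norm{\nabla_x\cdot b}_\infty}\norm{\rho_I}_\infty$ explicitly.

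The one step that does not close as written is your short-time window. Composing $\abs{\nabla_y p^u(s,y;t,x)}\lesssim (t-s)^{-(d+1)/2}e^{-c\abs{x-y}^2/(t-s)}$ with $p^v(0,z;s,y)\lesssim s^{-d/2}e^{-c\abs{y-z}^2/s}$ via the Gaussian semigroup identity collapses the $y$-integral to $(t-s)^{-1/2}\,t^{-d/2}e^{-c\abs{x-z}^2/t}$; integrating over $s\in[0,\varepsilon]$ and against the probability measure $\mu_{0,v}(dz)$ then leaves a factor of order $t^{(1-d)/2}$, which is not uniform in $t\in(0,T]$ once $d\ge 2$. The Gaussian moment bound of Condition~\ref{cd:init-data}(1) does not rescue this, since it does not force $\mu_{0,v}\in L^\infty$. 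The clean fix is to assume $\sup_u\norm{\mu_{0,u}}_\infty<\infty$, which propagates to $\sup_{s,v}\norm{\mu_{s,v}}_\infty<\infty$ and makes the source contribution $\lesssim \abs{u-v}\int_0^t(t-s)^{-1/2}ds$ with no splitting at all; otherwise you must restrict to $t\in[\tau,T]$. To be fair, the paper's own argument silently requires comparable regularity (its ``$+C\abs{u-v}$'' bound on the source needs $\norm{\mu_{t,v}}_\infty$ and $\norm{\nabla\mu_{t,v}}_\infty$ bounded), so your proposal is at a comparable level of rigor --- but state the boundedness hypothesis explicitly rather than attributing it to Harnack's inequality (which gives lower, not upper, bounds) or to Proposition~\ref{p:mu-L2-bound} (which controls the density only away from the origin).
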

The proof consists of several properties of parabolic equations, and we defer it to the Appendix~\ref{s:pf-technical}.

With the technical estimates given above, we are now able to prove Theorem~\ref{t:minimax-mu}. 
We start with the upper bound. 

\begin{proof}[Proof of Theorem~\ref{t:minimax-mu}, upper bound]
	We first work under assumption (a). 

	Given $(b,\sigma,G,\mu_0) \in \a^{s}_L(t_0,x_0)$, we know that 
	\begin{equation*}
		\abs{\mu(t_0,u_0,x) - \mu(t_0,u_0,x_0)} \le L \abs{x - x_0}^{s} \le L h_3^s 
	\end{equation*}
	whenever $x_0 - x \in supp(K_{h_3})$. 
	Thanks to Lemma~\ref{l:lip-cont-uv}, we have 
	\begin{equation*}
		\abs{\mu(t_0,u,x) - \mu(t_0,u_0,x)} \le C_I \abs{u-u_0} \le C_I h_2 \,,
	\end{equation*}
	whenever $u, u_0 \in I$. 
	So the bias term is bounded by 
	\begin{equation*}
		\abs{\JK_h \ast \mu_{t_0}(u_0,x_0) - \mu(t_0,u_0,x_0)}^2 \le 2 (C_I^2 + L^2) (h_2^2 + h_3^{2s}) \,.
	\end{equation*}
	Then, along with Lemma~\ref{l:improve-est-mu}, the total upper bound of the estimation error is given by 
	\begin{equation*}
		\E \abs{\hat \mu^n_h - \mu}^2 \le C (n^{-1} h_2^{-1} h_3^{-d} + n^{-2} h_2^{-2} h_3^{-2d} + n^{-2} h_3^{-2-2d} + n^{-2} h_2^{-2} h_3^{-d} + h_2^2 + h_3^{2s} ) \,.
	\end{equation*}
	Taking $h_2 = n^{-\frac{s}{d+3s}}$ and $h_3 = n^{-\frac{1}{d+3s}}$, we get 
	\begin{equation*}
		\E \abs{ \hat \mu^n_h (t_0,u_0,x_0) - \mu(t_0,u_0,x_0) }^2 \lesssim n^{-\frac{2s}{d+3s}} + n^{-\frac{6s-2}{d+3s}} \lesssim n^{-\frac{2s}{d+3s}} \,.
	\end{equation*}
	The last inequality holds when $s \ge \frac{1}{2}$. 
	Note that the implicit constant in the inequality depends only $T,d,C_I,L,\norm{\JK}_2,\norm{\JK}_\infty$, and the values of $\mu$ near $(t_0,u_0,x_0)$. 

	Next, we work under the assumption (b). 
	Analogous to above, we have 
	\begin{equation*}
		\E \abs{\hat \mu^n_h - \mu}^2 \le C (n^{-1} h_2^{-1} h_3^{-d} + n^{-2} h_2^{-2} h_3^{-2d} + n^{-2} h_3^{-2-\frac{p+2}{p}d} + n^{-2} h_2^{-2} h_3^{-d} + h_2^2 + h_3^{2s} ) \,.
	\end{equation*}
	Taking $h_2 = n^{-\frac{s}{d+3s}}$ and $h_3 = n^{-\frac{1}{d+3s}}$, we get 
	\begin{equation*}
		\E \abs{ \hat \mu^n_h (t_0,u_0,x_0) - \mu(t_0,u_0,x_0) }^2 \lesssim n^{-\frac{2s}{d+3s}} + n^{-\frac{6s-2+d-2d/p}{d+3s}} \,.
	\end{equation*}
	As $0 < s < \frac{1}{2}$, $p > 2$, and $p (2-4s) \le (p-2) d$, we have 
	\begin{equation*}
		6s-2 + d-\frac{2d}{p} \ge 2s \,.
	\end{equation*}
	This leads to the final asymptotic upper bound in~\eqref{e:minimax-mu-upper}, namely
	\begin{equation*}
		n^{-\frac{2s}{d+3s}} \,.
	\end{equation*}
\end{proof}

Finally, we demonstrate the lower bound using Le Cam's two-point method (see for instance Chapter 2 of \cite{LeCam1986}). 
We shall construct two examples of graphon mean-field systems such that, the total variation distance between their laws is bounded by $\frac{1}{2}$, while the densities at $(t_0,u_0,x_0)$ differ by some quantity of order $n^{-\frac{2s}{d+3s}}$. 
The construction is adapted from~\cite{MaestraHoffmann2022}, with an extra factor for the graphon index $u \in I$, so we will skip some technical details in the proof below.   

\begin{proof}[Proof of Theorem~\ref{t:minimax-mu}, lower bound]
	\textit{Step 1.}
	We consider graphon particle systems with no interactions. 

	Pick a smooth potential function $U_1: \R^d \to \R$ such that $\grad U_1$ is Lipschitz, $U_1 = 0$ in a neighborhood of $x_0$, and 
	\begin{equation*}
		\limsup_{\abs{x} \to \infty} \frac{x^T \grad U_1 (x)}{\abs{x}^2} > 0 \,.
	\end{equation*}
	Define the drift $b(x,y) = b_1(x) \defeq -\grad U_1(x)$.
	Pick a Lipschitz continuous function $G_1: I \times \R$ such that $G_1 = 0$ in a neighborhood of $u_0$, and define the graphon weight $G(u,v) = G_1(u)$. 
	We set $C_{1,u} \defeq \int_{\R^d} \exp (-2 G_1(u) U_1(x)) dx$ and define 
	\begin{equation*}
		\nu_1(u,x) = C_{1,u}^{-1} \exp (-2 G_1(u) U_1(x)) \,, \qquad u \in I \,.
	\end{equation*}
	Then we obtain a family of diffusion processes $\{X_u\}_{u \in I}$ such that 
	\begin{equation}
		\label{e:inv-sys-1}
		d X_u(t) = b_1(X_u(t)) G_1(u) dt + d B_u(t) \,, \quad X_u(0) \sim \nu_1(u)\,, \qquad u \in I \,.
	\end{equation}
	Notice that $X_u$'s are independent, and $\nu_1(u)$ is the invariant distribution of $X_u$. 
	This gives a graphon particle system with time-invariant density function $\nu_1$. 
	In particular, we may assume that $(b_1, I_{d \times d}, G_1, \nu_1) \in S^{s}_{L/2}(t_0,x_0)$. 
	
	Now we consider a deviation from the system~\eqref{e:inv-sys-1}.
	Let $\psi \in C_c^\infty(\R \times \R)$ be a cut-off function such that 
	\begin{itemize}
		\item $\psi(0,0) = 1$ and $\norm{\psi}_\infty = 1$, 
		\item $\int_{\R^d} \psi(u,x) dx = 0$ for every $u \in \R$, and $\norm{\psi}_2 = 1$, 
		\item $\sup_{u \in \R} \norm{\psi(u,\cdot)}_{\h^s(x_0)} < \infty$. 
	\end{itemize}
	Let $\alpha \in (0,1)$ be sufficiently small. 
	Then define $U_2: \R^d \to \R$ and $G_2: I \to [0,1]$ so that
	\begin{equation*}
		G_2^n(u) U_2^n (x) = G_1(u) U_1(x) + \alpha C_{1,u} n^{-1/2} \zeta_n^{1/2} \tau_n^{d/2} \psi( \zeta_n(u-u_0), \tau_n(x-x_0) ) \,,
	\end{equation*}
	where $\tau_n, \zeta_n$ are positive scalars that tend to $\infty$ as $n \to \infty$. 
	Let $b_2^n = - \grad U_2^n$. 
	Then we construct the second particle system similar to the above, with time-invariant density 
	\begin{equation*}
		\nu_2^n(u,x) = C_{2,n,u}^{-1} \exp (-2 G_2^n(u) U_2^n(x)) \,, \quad C_{2,n,u} \defeq \int_{\R^d} \exp (-2 G_2^n(u) U_2^n(x)) dx \,.
	\end{equation*}
	Moreover, to maintain the desired Lipschitz and Hölder continuity, we need 
	\begin{equation*}
		n^{-1/2} \zeta_n^{3/2} \tau_n^{s+d/2} \lesssim 1 \,.
	\end{equation*} 
	This allows us to take $\tau_n = n^{\frac{1}{d+3s}}$ and $\zeta_n = n^{\frac{s}{d+3s}}$, which also ensures that $(b_2^n, I_{d \times d}, G_2^n, \nu_2^n) \in S^s_L(t_0,x_0)$. 
	
	\textit{Step 2.}
	Now we run the finite-population systems derived from the above two graphon particle systems and make observations of the particle positions. 
	For~\eqref{e:inv-sys-1}, the $n$ particles display the dynamics 
	\begin{equation*}
		d X^n_i(t) = b_1(X^n_i(t)) G(\frac{i}{n}) dt + d B_{\frac{i}{n}}(t) \,, \qquad i = 1, \dots, n \,.
	\end{equation*}
	The distributions of the particles coincide with those in the graphon system, with joint law 
	\begin{equation*}
		\mu_1 \defeq \bigotimes_{i=1}^n \nu_1(\frac{i}{n}) \,.
	\end{equation*}
	Similarly, the joint law in the second system is given by 
	\begin{equation*}
		\mu_2^n \defeq \bigotimes_{i=1}^n \nu_2^n(\frac{i}{n}) \,.
	\end{equation*}
	Then, following the strategy in~\cite{MaestraHoffmann2022}, with Pinsker's inequality, we have  
	\begin{align*}
		\norm{\mu_1 - \mu_2^n}_{TV}^2 \le 2 \sum_{i=1}^n \abs{\log \frac{C_{2,n,\frac{i}{n}}}{C_{1,\frac{i}{n}}}}
	\end{align*}

	Taylor's theorem gives 
	\begin{align}
		\nonumber
		\abs{\log \frac{C_{2,n,\frac{i}{n}}}{C_{1,\frac{i}{n}}}} & \le \abs{ \frac{C_{2,n,\frac{i}{n}}}{C_{1,\frac{i}{n}}} - 1 } \\
		\label{e:taylor-nu1nu2}
		& = 2 \alpha^2 n^{-1} \zeta_n \tau_n^{d} \int_{\R^d} \nu_1(u,x)^{-1} \psi (\zeta_n(\frac{i}{n}-u_0), \tau_n(x-x_0))^2 R_i(x) dx \,,
	\end{align}
	where the remainder term $R_i \in [0,2]$. 
	Notice that $\nu_1(u,x)^{-1}$ is bounded above in a neighborhood of $(u_0,x_0)$. 
	So there exists some constant $c_1$ such that 
	\begin{equation*}
		\norm{\mu_1 - \mu_2^n}_{TV}^2 \le \frac{c_1 \alpha^2 \zeta_n}{n} \sum_{i=1}^n \norm{\psi (\zeta_n(\frac{i}{n}-u_0), \cdot)}_2^2 \,.
	\end{equation*}
	Since $\psi \in C_c^\infty$, we have
	\begin{align*}
		& \int_I \int_{\R^d} \abs{ \psi ( \zeta_n(\frac{\ceil{nu}}{n}-u_0), x)^2 - \psi (\zeta_n(u-u_0), x)^2 } dx du \\
		& \le 2 \int_I \int_{\R^d} \abs{\zeta_n (\frac{\ceil{nu}}{n} - u)} \psi_0(x) dx du \\
		& \le 2 n^{-1} \zeta_n \norm{\psi_0}_1 \,,
	\end{align*}
	where we set $\psi_0 \defeq \sum_{k=1}^d \norm{\grad_u \psi_k (x)}_{L^\infty(\R)}$. 
	This implies 
	\begin{equation*}
		\frac{1}{n} \sum_{i=1}^n \norm{\psi (\zeta_n(\frac{i}{n}-u_0), \cdot)}_2^2 \le \zeta_n^{-1} \norm{\psi}_2^2 + 2 n^{-1} \zeta_n \norm{\psi_0}_1 \,.
	\end{equation*}
	Thus 
	\begin{equation*}
		\norm{\mu_1 - \mu_2^n}_{TV}^2 \le c_1 \alpha^2 + o(1) \le \frac{1}{4}
	\end{equation*}
	when $\alpha$ is chosen to be small enough and $n$ is sufficiently large. 

	\textit{Step 3.}
	Finally, we apply the Le Cam's lemma to see that 
	\begin{align*}
		& \inf_{\hat \mu} \sup_{ (b,\sigma,G,\mu_0) \in \a^{s}_L (t_0,x_0) } \E \abs{ \hat \mu - \mu(t_0,u_0,x_0) } \\
		\ge & \inf_{\hat \mu} \max_{\tilde \mu \in \{\mu_1, \mu_2^n\}} \E \abs{\hat \mu - \tilde \mu (t_0,u_0,x_0)} \\
		\ge & \frac{1}{2} \abs{\nu_1(u_0,x_0) - \nu_2^n(u_0,x_0)} (1 - \norm{\mu_1 - \mu_2^n}_{TV}) \\
		\ge & \frac{1}{4} \abs{\nu_1(u_0,x_0) - \nu_2^n(u_0,x_0)} \,.
	\end{align*}
	The same strategy as in~\eqref{e:taylor-nu1nu2} (see also equation (68) in~\cite{MaestraHoffmann2022}) gives that 
	\begin{equation*}
		\abs{\nu_1(u_0,x_0) - \nu_2^n(u_0,x_0)} \gtrsim n^{-1/2} \zeta_n^{1/2} \tau_n^{d/2} = n^{-\frac{s}{d+3s}} \,.
	\end{equation*}
	Therefore, we get the lower bound~\eqref{e:minimax-mu-lower} as well:
	\begin{equation*}
		\inf_{\hat \mu} \sup_{ (b,\sigma,G,\mu_0) \in \a^{s}_L (t_0,x_0) } \E \abs{ \hat \mu - \mu(t_0,u_0,x_0) }^2 \ge n^{-\frac{2s}{d+3s}} \,,
	\end{equation*}
	completing the proof of Theorem~\ref{t:minimax-mu}.
\end{proof}

\subsection{Proof of Theorem~\ref{t:minimax-beta}}\label{s:minimax-beta-prf}
\restartsteps

We first justify the improved upper bound. 
For notation simplicity, let us define the $n,p$-norm of a function $f: [0,T] \times I \times \R^d \to \R^d$ via 
\begin{equation*}
	\norm{\varphi}_{n,p}^p \defeq \int_0^T \frac{1}{n} \sum_{i=1}^n \int_{\R^d} f(t, \frac{i}{n}, x)^p dx dt \,.
\end{equation*}

\begin{proof}[Proof of Lemma~\ref{l:improve-est-beta}]
	Fix $t_0, u_0, x_0$. 
	We consider the following telescoping sum
	\begin{align*}
		& \hat \pi^n_h (t_0, u_0, x_0) - \pi(t_0, u_0, x_0) \\
		& = \int_0^T \frac{1}{n} \sum_{i=1}^n \HJK_h (t_0-t, u_0-\frac{i}{n}, x_0-X^n_i(t)) (Y^n_i(t) - \beta(t,\frac{i}{n}, X^n_i(t))) dt \\
		& + \int_0^T \int_I \int_{\R^d} \HJK_h(t_0-t, u_0-u, x_0-x) \beta(t,u,x) (\mu^n_t(du,dx) - \mu_{t,u}(dx)du) dt \\
		& + \int_0^T \frac{1}{n} \sum_{i=1}^n \HJK_h (t_0-t, u_0-\frac{i}{n}, x_0-X^n_i(t)) \sigma(X^n_i(t)) dB_{\frac{i}{n}}(t) \\
		& + (\HJK_h \ast \pi - \pi) (t_0,u_0,x_0) \,.
	\end{align*}
	This allows us to write 
	\begin{equation*}
		\E \abs{ \hat \pi^n_h (t_0, u_0, x_0) - \pi(t_0, u_0, x_0) }^2 \le 4 ( \E \abs{P'_1}^2 + \E \abs{P'_2}^2 + \E \abs{P'_3}^2 + \E \abs{P'_4}^2 )
	\end{equation*}
	with obvious definitions of the four components. 

	\textit{Step 1.}
	To bound $P'_2$, we observe that 
	\begin{align*}
		P'_2 & = \int_{[0,T] \times I \times \R^d} (\varphi \beta) (t,u,x) ( \mu^n_t(du,dx) - \mu_{t,u}(dx)du) dt \\
		& = \int_{[0,T] \times I \times \R^d} (\varphi \beta) (t,u,x) ( \mu^n_t(du,dx) - \bar \mu^n_t(du,dx)) dt \\
		& \quad + \int_{[0,T] \times I \times \R^d} (\varphi \beta) (t,u,x) (\bar \mu^n_t(du,dx) - \mu_{t,u}(dx)du) dt \,,
	\end{align*}
	where $\bar \mu^n_t(du,dx) = \frac{1}{n} \sum_{i=1}^n \mu_{t,u}(dx) \delta_{\frac{i}{n}}(du)$, and $\varphi = \HJK_h(t_0-\cdot, u_0-\cdot, x_0-\cdot)$. 
	Then the first part becomes 
	\begin{equation*}
		P'_{2,1} \defeq \frac{1}{n} \int_0^T (\varphi \beta) (t,\frac{i}{n},X^n_i(t)) - \bar \E [(\varphi \beta) (t,\frac{i}{n},X^n_i(t))] dt \,,
	\end{equation*} 
	where $\bar \P$ is the measure under which $(X^n_i)_{i=1,\dots,n}$ are independent, with corresponding laws $\mu_{\frac{i}{n}}$, $i=1,\dots,n$. 
	Since $\beta$ is bounded, and $\mu(t,u,x)$ is bounded in a small neighborhood of $(t_0,u_0,x_0)$, we have 
	\begin{equation*}
		\bar \E [(\varphi \beta) (t,\frac{i}{n},X^n_i(t))]^2 \lesssim H_{h_1}^2(t_0-t) J_{h_2}^2(u_0-\frac{i}{n}) \norm{K_{h_3}}_2^2 \,,
	\end{equation*}
	where the implicit constant is uniform over $i = 1, \dots, n$. 
	This gives 
	\begin{equation*}
		\int_0^T \sum_{i=1}^n \bar \E [(\varphi \beta) (t,\frac{i}{n},X^n_i(t))]^2 \lesssim \norm{\HJK_h}_{n,2}^2 \,.
	\end{equation*}
	Applying Bernstein's inequality gives 
	\begin{align*}
		\P \paren[\big]{ \abs{P'_{2,1}} \ge \sqrt{z} } & \le c_1 \bar \P \paren[\big]{ \abs{P'_{2,1}} \ge \sqrt{z} }^{c_2} \\
		& \le 2 d c_1 \exp \left( -\frac{c_2 n z}{c_3 (T^{-1} \norm{\HJK_h}_{n,2}^2 + \sqrt{z} \norm{\HJK_h}_\infty)} \right) \,,
	\end{align*}
	for some positive constants $c_1, c_2, c_3$. 
	Note that $J_{h_2}$ is supported on $[u_0-h_2, u_0+h_2]$, so 
	\begin{align*}
		\norm{\HJK_h}_{n,2}^2 & = \int_0^T H_{h_1}^2(t_0-t) \frac{1}{n} \sum_{i=1}^n J_{h_2}^2(u_0-\frac{i}{n}) \norm{K_{h_3}}_2^2 dt \\
		& \lesssim \norm{H_{h_1}}_2^2 h_2 \norm{J_{h_2}}_\infty^2 \norm{K_{h_3}}_2^2 \\
		& \lesssim h_1^{-1} h_2^{-1} h_3^{-d} \,.
	\end{align*}
	Thus 
	\begin{align*}
		\E \abs{P'_{2,1}}^2 & \lesssim 2 d c_1 \int_0^\infty \exp \left( -\frac{c_2 n z}{c_3 (T^{-1} \norm{\HJK_h}_{n,2}^2 + \sqrt{z} \norm{\HJK_h}_\infty)} \right) dz \\
		& \lesssim n^{-1} h_1^{-1} h_2^{-1} h_3^{-d} + n^{-2} h_1^{-2} h_2^{-2} h_3^{-2d} \,.
	\end{align*}

	For the second part, we observe that 
	\begin{align*}
		P'_{2,2} & \defeq \int_{[0,T] \times I \times \R^d} (\varphi \beta) (t,u,x) (\bar \mu^n_t(du,dx) - \mu_{t,u}(dx)du) dt \\
		& = \int_0^T \int_I \E \left[ (\varphi \beta) (t,\frac{\ceil{nu}}{n},X_{\frac{\ceil{nu}}{n}}(t)) - (\varphi \beta) (t,u,X_u(t)) \right] du dt \,.
	\end{align*}
	This is identical to $P_3$ in the proof of Lemma~\ref{l:HJK-est-pi}, which gives 
	\begin{equation*}
		\abs{P'_{2,2}} \lesssim n^{-2} h_1^{-1} h_3^{-2d} (h_2^{-2} + h_3^{-2}) \,.
	\end{equation*}
	Thus 
	\begin{equation*}
		\E \abs{P'_2} \lesssim n^{-1} h_1^{-1} h_2^{-1} h_3^{-d} + n^{-2} h_1^{-1} h_3^{-2d} (h_1^{-1} h_2^{-2} + h_3^{-2})
	\end{equation*}

	\textit{Step 2.}
	To bound $P'_1$, let us abuse the notation $\beta: [0,T] \times I \times \R^d \times \prob(I \times \R^d) \to \R^d$ for the mean-field dependent quantity 
	\begin{equation*}
		\beta(t,u,x,\mu_t) = \int_I \int_{\R^d} G(u,v) b(x,y) \mu_{t,u}(dx)dv \,.
	\end{equation*}
	Then we may write 
	\begin{align*}
		Y^n_i(t) - \beta(t,\frac{i}{n}, X^n_i(t)) & = \beta(t,\frac{i}{n},X^n_i(t),\mu^n_t) - \beta(t,\frac{i}{n},X^n_i(t),\mu_t) \\
		& = \int_I \int_{\R^d} G(\frac{i}{n},u) b(X^n_i(t),x) (\mu^n_t(du,dx) - \mu_{t,u}(dx) du) \,.
	\end{align*}
	Note that $b$ and $G$ are bounded and Lipschitz, we may compare with Proposition 19 under Assumption 4(iii)~\cite{MaestraHoffmann2022}.
	This gives a uniform-in-time bound 
	\begin{equation*}
		\P \left( \abs{Y^n_i(t) - \beta(t,\frac{i}{n}, X^n_i(t))} \ge z  \right) \le c_1 \exp \paren[\big]{ -\frac{c_2 n z^2}{1 + \sqrt{n} z} } \,, \qquad z > 0 \,.
	\end{equation*} 

	Now, applying Cauchy-Schwarz twice, we obtain 
	\begin{align*}
		\E \abs{P'_1}^2 & \le \left[ \E \left( \int_0^T \frac{1}{n} \sum_{i=1}^n \HJK_h(t_0-t,u_0-\frac{i}{n},x_0-X^n_i(t))^2  dt \right)^2 \right]^{1/2} \\
		& \quad \cdot \left[ \E \left( \int_0^T \frac{1}{n} \sum_{i=1}^n \abs{Y^n_i(t) - \beta(t,\frac{i}{n}, X^n_i(t))}^2 dt \right)^2 \right]^{1/2}
	\end{align*}
	Applying Cauchy-Schwarz to the second term again, we see that 
	\begin{align*}
		& \left[ \E \left( \int_0^T \frac{1}{n} \sum_{i=1}^n \abs{Y^n_i(t) - \beta(t,\frac{i}{n}, X^n_i(t))}^2 dt \right)^2 \right]^{1/2} \\
		& \le T^{1/2} \left( \int_0^T \frac{1}{n} \sum_{i=1}^n \E  \abs{Y^n_i(t) - \beta(t,\frac{i}{n}, X^n_i(t))}^4 dt \right)^{1/2} \\
		& \lesssim \sup_{t \in [0,T]} \sup_{i = 1, \dots, n} \left( \int_0^\infty \P \left( \abs{Y^n_i(t) - \beta(t,\frac{i}{n}, X^n_i(t))} \ge z^{1/4}  \right) dz \right)^{1/2} \\
		& \lesssim \int_0^\infty 2 c_1 \exp \paren[\big]{ -\frac{c_2 n z^{1/2}}{1 + \sqrt{n} z^{1/4}} }^{1/2}  \lesssim n^{-1} \,.
	\end{align*}
	For the first term, Minkowski's inequality gives an upper bound
	\begin{align}
		\nonumber
		& \int_{[0, T] \times I \times \R^d} \varphi(t,u,x)^2 \mu_{t,u}(dx)dudt \\
		\label{e:bernstein-4th-norm}
		& + \left[ \E \left( \int_{[0, T] \times I \times \R^d} \varphi(t,u,x)^2 (\mu^n_t(du,dx) - \mu_{t,u}(dx)du ) dt \right)^2 \right]^{1/2} \,,
	\end{align}
	where $\varphi = \HJK_h(t_0-\cdot, u_0-\cdot, x_0-\cdot)$. 
	With the same strategy applied to bound $P'_{2,1}$, we get 
	\begin{align*}
		& \E \left[ \int_{[0,T] \times I \times \R^d} \varphi(t,u,x)^2 (\mu^n_t(du,dx) - \bar \mu^n_t(du,dx)) dt \right]^2  \\ 
		& \qquad \qquad \lesssim n^{-1} h_1^{-3} h_2^{-3} h_3^{-3d} + n^{-2} h_1^{-4} h_2^{-4} h_3^{-4d} \,,
	\end{align*}
	which follows from the fact that 
	\begin{equation*}
		\norm{\varphi}_{n,4}^4 \lesssim h_1^{-3} h_2^{-3} h_3^{-3d} \,.
	\end{equation*}
	On the other hand, using the idea of $P'_{2,2}$, we get 
	\begin{align*}
		& \E \left[ \int_{[0,T] \times I \times \R^d} \varphi(t,u,x)^2 (\bar \mu^n_t(du,dx) - \bar \mu_{t,u}(dx)du) dt \right]  \\ 
		& \qquad \qquad \lesssim n^{-1} h_1^{-1} h_2^{-2} h_3^{-d} + n^{-1} h_1^{-1} h_2^{-2} h_3^{-1-2d} \,.
	\end{align*}
	Those lead to the following asymptotic upper bound for the first term, 
	\begin{align*}
		& h_1^{-1} h_2^{-1} h_3^{-d} + n^{-1/2} h_1^{-3/2} h_2^{-3/2} h_3^{-3d/2} + n^{-1} h_1^{-2} h_2^{-2} h_3^{-2d} \\
		& + n^{-1} h_1^{-1} h_2^{-2} h_3^{-d} + n^{-1} h_1^{-1} h_2^{-2} h_3^{-1-2d} \,,
	\end{align*}
	Joining the above leads to 
	\begin{equation*}
		\E \abs{P'_1}^2 \lesssim n^{-1} h_1^{-1} h_2^{-1} h_3^{-d} + n^{-2} h_1^{-1} h_2^{-2} h_3^{-1-2d} \,,
	\end{equation*}
	where we assume $n^{-1} h_1^{-1} h_2^{-1} h_3^{-d} \lesssim 1$. 

	\textit{Step 3.}
	Now, to bound $P'_3$, we apply It\^o's isometry to see that 
	\begin{align*}
		\E \abs{P'_3}^2 & \le \frac{\sigma_+^2 d^2}{n} \E \left[ \int_0^T \frac{1}{n} \sum_{i=1}^n \varphi(t,\frac{i}{n},X^n_i(t))^2 dt \right] \\
		& = \frac{\sigma_+^2 d^2}{n} \E \left[ \int_{[0,T] \times I \times \R^d} \varphi(t,u,x)^2 \mu^n_t(du,dx) dt \right] \,.
	\end{align*}
	Note that 
	\begin{equation*}
		\E \left[ \int_{[0,T] \times I \times \R^d} \varphi(t,u,x)^2  \mu_{t,u}(dx)du dt \right] \lesssim \norm{\varphi}_2^2 \,.
	\end{equation*}
	We write again 
	\begin{equation*}
		\mu^n_t(du,dx) - \mu_{t,u}(dx)du = (\mu^n_t(du,dx) - \bar \mu^n_t(du,dx)) + (\bar \mu^n_t (du,dx) - \mu_{t,u}(dx)du) \,.
	\end{equation*}
	Using the bound for~\eqref{e:bernstein-4th-norm} in Step 2 leads to
	\begin{align*}
		\E \abs{P'_3}^2  \lesssim n^{-1} h_1^{-1} h_2^{-1} h_3^{-d} + n^{-2} h_1^{-1} h_2^{-2} h_3^{-1-2d} \,,
	\end{align*}
	whenever $n^{-1} h_1^{-1} h_2^{-1} h_3^{-d} \lesssim 1$.

	Summarizing the above, we conclude that
	\begin{equation*}
		\E \abs{\hat \pi^n_h - \pi}^2 \lesssim n^{-1} h_1^{-1} h_2^{-1} h_3^{-d} + n^{-2} h_1^{-1} h_2^{-2} h_3^{-1-2d} + n^{-2} h_1^{-1} h_3^{-2-2d} + \abs{\varphi \ast \pi - \pi}^2 \,.
	\end{equation*}
\end{proof}

\iffalse 
It remains to bound the bias term $\HJK_h \ast \pi - \pi$. 
Recall that $\pi = \mu \beta$, where 
\begin{equation*}
	\beta(t,u,x) = \int_I G(u,v) \int_{\R^d} b(x,y) \mu_{t,v}(dy) dv = \int_I G(u,v) \E [b(x, X_v(t))] dv \,.
\end{equation*}
The boundedness of $b$ ensures the Lipschitz continuity of $\beta$ in variable $u$, while the Lipschitz continuity of $b$ leads to local Hölder $s_3$-continuity of $\beta$ in variable $x$ for $s_3 \in [0,1]$. 
Moreover, for $0 < t_1 < t_2 < T$, by It\^o's formula we have 
\begin{align*}
	& \beta(t_2,u,x) - \beta(t_1,u,x) \\
	& = \int_I G(u,v) \E (b(t_2, x,X_v(t_2)) - b(t_1, x,X_v(t_1))) dv \\
	& = \int_I G(u,v) \E \left[ \int_{t_1}^{t_2} (\p_t b_t(x) + \p_y b (x) \beta_{t,v} + \frac{1}{2} \p_y^2 b(x) \tr(\sigma \sigma^T)) (X_v(t)) dt \right] dv \,.
\end{align*}
Given the uniform boundedness of $b$, $\p_t b$, $\p_y b$, and $\p_y^2 b$, we have a uniform bound
\begin{equation*}
	\abs{\beta(t_2,u,x) - \beta(t_1,u,x)} \lesssim \abs{t_2 - t_1} \,,
\end{equation*}
and this is further bounded by $O(\abs{t_2 - t_1}^{s_1})$ for $s_1 \in [0,1]$ whenever $t_2 - t_1 < 1$. 
Then we have $\beta \in \h^{s_1,s_3}(t_0,x_0)$ as well, and so does $\pi = \mu \beta$. 
\fi 

Finally, analogous to the analysis of $\mu$, we are able to show the optimality of $\hat \beta$. 

\begin{proof}[Proof of Theorem~\ref{t:minimax-beta}]
	The strategies are identical to the proof of Theorem~\ref{t:minimax-mu}.

	\step[Upper bound]
	Fix $t_0 \in (0,T)$, $u_0 \in I$, $x_0 \in \R^d$, and $\kappa_2 > 0$. 
	We assume that $n^{-1} h_1^{-1} h_2^{-1} h_3^{-d} \lesssim 1$. 

	Recall that 
	\begin{equation*}
		\abs{\hat \beta_{h,\kappa} - \beta}^2 \lesssim \kappa_2^{-2} (\abs{\hat \pi^n_h - \pi}^2 + \norm{b}_\infty^2 \abs{\hat \mu^n_h - \mu}^2) 
	\end{equation*}
	whenever $\kappa_2 < \inf_{\supp \HJK_h} \mu$. 
	From Lemmata~\ref{l:improve-est-mu} and~\ref{l:improve-est-beta}, we have 
	\begin{align*}
		& \E \abs{\hat \mu^n_h - \mu}^2 \lesssim \\
		& \qquad n^{-1} h_2^{-1} h_3^{-d} + n^{-2} h_2^{-2} h_3^{-2d} + n^{-2} h_3^{-2-2d} + n^{-2} h_2^{-2} h_3^{-d} \\
		& \qquad + \abs{\JK_h \ast \mu - \mu}^2 \,,
	\end{align*}
	and 
	\begin{align*}
		& \E \abs{\hat \pi^n_h - \pi}^2 \lesssim \\
		& \qquad n^{-1} h_1^{-1} h_2^{-1} h_3^{-d} + n^{-2} h_1^{-1} h_2^{-2} h_3^{-1-2d} + n^{-2} h_1^{-1} h_3^{-2-2d} \\
		& \qquad + \abs{\HJK_h \ast \pi - \pi}^2 \,.
	\end{align*}

	In the proof of Theorem~\ref{t:minimax-mu} we saw that  
	\begin{equation*}
		\abs{\JK_h \ast \mu - \mu}^2 \lesssim h_2^2 + h_3^{2s_3} \,.
	\end{equation*}
	We may obtain a similar upper bound for $\abs{\HJK_h \ast \pi - \pi}^2$.

	Recall that $\pi = \mu \beta$, where 
	\begin{equation*}
		\beta(t,u,x) = \int_I G(u,v) \int_{\R^d} b(t,x,y) \mu_{t,v}(dy) dv = \int_I G(u,v) \E [b(t, x, X_v(t))] dv \,.
	\end{equation*}
	The boundedness of $b$ ensures the Lipschitz continuity of $\beta$ in variable $u$, while the Lipschitz continuity of $b$ leads to local Hölder $s_3$-continuity of $\beta$ in variable $x$ for $s_3 \in [0,1]$. 
	Moreover, for $0 < t_1 < t_2 < T$, by It\^o's formula we have 
	\begin{align*}
		& \beta(t_2,u,x) - \beta(t_1,u,x) \\
		& = \int_I G(u,v) \E (b(t_2, x,X_v(t_2)) - b(t_1, x,X_v(t_1))) dv \\
		& = \int_I G(u,v) \E \left[ \int_{t_1}^{t_2} (\p_t b_t(x) + \p_y b (x) \beta_{t,v} + \frac{1}{2} \p_y^2 b(x) \tr(\sigma \sigma^T)) (X_v(t)) dt \right] dv \,.
	\end{align*}
	Given the uniform boundedness of $b$, $\p_t b$, $\p_y b$, and $\p_y^2 b$, we have a uniform bound
	\begin{equation*}
		\abs{\beta(t_2,u,x) - \beta(t_1,u,x)} \lesssim \abs{t_2 - t_1} \,,
	\end{equation*}
	and this is further bounded by $O(\abs{t_2 - t_1}^{s_1})$ for $s_1 \in [0,1]$ whenever $t_2 - t_1 < 1$. 
	Then we have $\beta \in \h^{s_1,s_3}(t_0,x_0)$ as well, and so does $\pi = \mu \beta$. 
	Thus
	\begin{equation*}
		\abs{\HJK_h \ast \pi - \pi}^2 \lesssim h_1^{2s_1} + h_2^2 + h_3^{2s_3} \,.
	\end{equation*}

	Joining the above leads to the bound 
	\begin{equation*}
		\E \abs{\hat \beta^n_{h,\kappa} - \beta}^2 \lesssim n^{-1} h_1^{-1} h_2^{-1} h_3^{-d} + n^{-2} h_1^{-1} h_2^{-2} h_3^{-1-2d} + n^{-2} h_1^{-1} h_3^{-2-2d} + h_1^{2s_1} + h_2^2 + h_3^{2s_3} \,.
	\end{equation*}
	Choosing $h_1 = n^{-\frac{s_b}{s_1(2s_b+1)}}$, $h_2 = n^{-\frac{s_b}{2s_b+1}}$, and $h_3 = n^{-\frac{s_b}{s_3(2s_b+1)}}$, we get 
	\begin{equation*}
		\E \abs{\hat \beta^n_{h,\kappa} (t_0,u_0,x_0) - \beta (t_0,u_0,x_0)}^2 \lesssim n^{-\frac{2s_b}{2s_b+1}} \,.
	\end{equation*} 
	Note that the implicit constant depends only on $T,d,\norm{b}_\infty$, the $L^2$ and $L^\infty$ norms of $\HJK$, and the values of $\mu$ in a small neighborhood of $(t_0,u_0,x_0)$. 
	That concludes~\eqref{e:minimax-beta-upper}. 

	\step[Lower bound]
	We construct examples and apply the two-point comparison lemma in a similar way as in the proof of Theorem~\ref{t:minimax-mu}. 

	Let $L > 0$.
	We consider also models with no interactions, so that 
	\begin{equation*}
		\beta(t,u,x) = \int_I G(u) \int_{\R^d} b(t,x) \mu_{t,v}(dy) dv = G(u) b(t,x) \,.
	\end{equation*}
	Pick $b_1(t,x)$ and $G_1(u)$ so that $(b_1,I_{d \times d},G_1,\mu_0) \in \check \a^{s_1,s_3}_{L/2}(t_0,x_0)$. 
	Note that there is no interaction, so the particles $(X^n_1,\dots,X^n_n)$ are independent, with joint law denoted by $\nu_1$. 

	Choose some $\psi \in C_c^\infty(\R \times \R \times \R^d)$ such that 
	\begin{itemize}
		\item $\psi(0,0,0) = 1$, $\norm{\psi}_\infty 1$,
		\item $\int \psi dtdudx = 0$, $\norm{\psi}_2 = 1$,
		\item $\sup_{u \in \R} \norm{\psi(u,\cdot)}_{\h^{s_1,s_3}(t_0,x_0)} < \infty$. 
	\end{itemize}
	For $n \ge 1$ and some small enough $\alpha \in (0,1)$, pick $b_2^n(t,x)$ and $G_2^n(u)$ so that 
	\begin{equation*}
		G_2^n(u) b_2^n(t,x) = G_1(u) b_1(t,x) + \alpha n^{-\frac{1}{2}} (\tau^1_n)^{\frac{1}{2}} (\tau^2_n)^{\frac{1}{2}} (\tau^3_n)^{\frac{d}{2}} \psi(\tau^1_n(t-t_0), \tau^2_n(u-u_0), \tau^3_n(x-x_0)) \,,
	\end{equation*}
	where $(\tau^i_n)_{n \ge 1}$, $i=1,2,3$, are sequences of scalars such that 
	\begin{equation*}
		(\tau^1_n)^{s_1} = (\tau^2_n)^{s_2} = (\tau^3_n)^{\frac{s_3}{d}} = n^{\frac{s_b}{2s_b+1}} \,.   
	\end{equation*}
	With proper choice of parameters $(b_1, G_1, \mu_0)$, small enough $\alpha$ and large enough $n$, the local Hölder continuity of the density follows from classical estimates given in~\cite{BogachevKrylovRocknerShaposhnikov2015}.
	This means we may assume $(b_2^n,G_2^n,I_{d\times d}, \mu_0) \in \check \a^{s_1,s_3}_{L}(t_0,x_0)$. 
	
	Denote by $\nu_2^n$ the joint law of the particles $(X^n_1,\dots,X^n_n)$ derived from the parameters $(b_2^n,G_2^n,I_{d\times d}, \mu_0)$. 
	Following the idea of Lemma 28 in~\cite{MaestraHoffmann2022}, we see that 
	\begin{equation}
		\label{e:lemma28}
		\norm{\nu_1 - \nu_2^n}_{TV}^2  \le \frac{1}{4} \int_0^T \sum_{i=1}^n \E \abs{ G_2^n(\frac{i}{n}) b_2^n(t,X^n_i(t)) - G_1(\frac{i}{n}) b_1(t,X^n_i(t)) }^2 dt \,.
	\end{equation}
	Given the compact support of $\psi$ and local boundedness of $\mu_{t,u}$, \eqref{e:lemma28} is further bounded by 
	\begin{align*}
		\frac{\alpha^2}{4} \int_0^T \tau^1_n \sum_{i=1}^n \tau^2_n (\tau^3_n)^d \norm{\psi(\tau^1_n(t-t_0), \tau^2_n(\frac{i}{n}-u_0), \tau^3_n(\cdot-x_0))}_2^2 dt \lesssim \frac{\alpha^2}{4} < \frac{1}{4}
	\end{align*}
	when $\alpha$ is sufficiently small. 

	On the other hand, 
	\begin{equation*}
		\abs{G_2^n(u_0) b_2^n(t_0,x_0) - G_1(u_0) b_1(t_0,x_0)} = \alpha n^{-\frac{1}{2}} (\tau^1_n)^{\frac{1}{2}} (\tau^2_n)^{\frac{1}{2}} (\tau^3_n)^{\frac{d}{2}} \gtrsim n^{-\frac{s_b}{2s_b+1}} \,.
	\end{equation*}
	Applying Le Cam's two-point comparison method gives the lower bound
	\begin{align*}
		& \inf_{\hat \beta} \sup_{ (b,\sigma,G,\mu_0) \in \check \a^{s_1,s_3}_L (t_0,x_0) } \E \abs{ \hat \beta - \beta(t_0,u_0,x_0) } \\
		\ge & \inf_{\hat \beta} \max_{\tilde \beta \in \{\beta_1, \beta_2^n\}} \E \abs{\hat \beta - \tilde \beta (t_0,u_0,x_0)} \\
		\ge & \frac{1}{2} \abs{\beta_1(t_0,u_0,x_0) - \beta_2^n(t_0,u_0,x_0)} (1 - \norm{\nu_1 - \nu_2^n}_{TV}) \\
		\ge & \frac{1}{4} \abs{G_1(u_0) b_1(t_0,x_0) - G_2^n(u_0) b_2^n(t_0,x_0)} \\
		\gtrsim & n^{-\frac{s_b}{2s_b+1}} \,.
	\end{align*}
\end{proof}

\section*{Acknowledgement}

We express our sincere gratitude to Marc Hoffmann for helpful discussions.

\appendix

\section{Intuitions of Estimators}\label{s:intu-est}

Recall that 
\begin{equation*}
	\beta(t,u,x) = \int_I \int_{\R^d} b(x,y) G(u,v) \mu_{t,v}(dy) dv \,.
\end{equation*}
With Condition~\ref{cd:coeffs}(2) and~\ref{cd:graphon}(2), we may further expand it as 
\begin{equation*}
	\beta(t,u,x) = V(x) \int_I g(u-v) dv + gF \ast \mu_t (u,x) \,,
\end{equation*}
where the convolution here is done on the space $\R \times \R^d$. 

The first term is independent of time $t$, so we have $\p_t \beta = gF \ast \p_t \mu$.
Note that $\p_t$ is a linear operator, and we may approximate it by some finite difference operator 
\begin{equation*}
	D_h f (t_0) \defeq \frac{f(t_0+h) - f(t_0)}{h} = \int_0^T f(t) \frac{\delta_{t_0+h}(dt) - \delta_{t_0}(dt)}{h} \,. 
\end{equation*}
We consider some linear operator $\lin_\phi$ with bounded function $\phi$ that approximates the differential operator, and
\begin{equation*}
	\lin_\phi \beta = (g \otimes F) \ast \lin_\phi \mu \,.
\end{equation*}
Then the deconvolution method gives 
\begin{equation*}
	\f_I \f_{\R^d} \lin_\phi \beta = (\f_I g) (\f_{\R^d} F) (\f_I \f_{\R^d} \lin_\phi \mu) \,,
\end{equation*}
i.e.\ $\t \beta = (\f g) (\f F) \t \mu$. 
Thus it leads to the formula 
\begin{equation*}
	G(u_0,v_0) = \frac{g(u_0-v_0) \norm{F}_2}{g_0 \norm{F}_2} = \frac{\norm{\f_I^{-1} \paren[\big]{ \frac{\t \beta}{\t \mu} } (u=u_0-v_0) }_2}{\norm{\f_I^{-1} \paren[\big]{ \frac{\t \beta}{\t \mu} } (u=0) }_2} 
\end{equation*}
whenever well-defined.

Once we have an estimate of $\mu$ and an estimate of $\beta$, we may plug them into this formula. 
With some additional cutoff factors (to avoid the denominators being too small), we produce the estimator~\eqref{e:estGdef}.

\section{Proofs of Technical Lemmas}\label{s:pf-technical}

\begin{proof}[Proof of Proposition~\ref{pp:mu-L2-bound}]
	Recall that for each $u \in I$, the dynamic of type-$u$ particles is given by $dX_u(t) = \beta(t, u, X_u(t)) dt + \sigma(X_u(t)) d B_u(t)$,
	where $\beta_{t,u} \defeq \beta(t, u, \cdot)$ is Lipschitz on $\R^d$ (this is not hard to verify).
	So $\mu_{t,u}$ uniquely solves the Fokker-Planck equation
	\begin{equation*}
		\p_t \mu_{t,u} = - \grad \cdot (\beta_{t,u} \mu_{t,u}) + \frac{1}{2} \sum_{i,j=1}^d \p_{x_i x_j} ((\sigma \sigma^T)_{ij}  \mu_{t,u}) \,.
	\end{equation*}
	Theorem 7.3.3 in~\cite{BogachevKrylovRocknerShaposhnikov2015} gives a local upper bound
	\begin{equation*}
		\norm{\mu_{t,u}}_{L^\infty(U)} \le C \norm{\mu_{0,u}}_{L^\infty(U)} + C t^{(p-d-2)/2} (1 + \norm{\beta}_{L^p(\mu_{t,u})}^p) 
	\end{equation*}
	for all $p > d+2$ and any bounded open set $U$, whenever the $L^\infty$-norm is well-defined. 
	
	We find that for any $p$, 
	\begin{align*}
		& \int_{\R^d} \abs{\beta(t, u, x)}^p \mu(t, u, dx) \\
		& = \int_{\R^d} \abs{ \int_I \int_{\R^d} b(x, y) G(u, v) \mu(t, v, dy) dv }^p \mu(t, u, dx) \\
		& \lesssim \int_I \int_{\R^d} \abs{\int_{\R^d} b(x, y) \mu(t, v, dy)}^p \mu(t, u, dx) dv \\
		& \le \norm{b}_\infty^p  < \infty \,.
	\end{align*}
	From Condition~\ref{cd:init-data}(1) we know that there exists some $R > 0$ such that $\mu(0,u,x)$ is uniformly bounded by some constant $M$ outside the unit ball $B(0,R)$ for all $u \in I$. 
	We cover $\overline{B(0,R)}^c$ with open balls, so that 
	\begin{equation*}
		\mu(t, u, x) \le C M + C t^{(p-d-2)/2} (1 + \norm{b}_\infty^p) 
	\end{equation*}
	holds for every $t \in [0, T]$, $u \in I$, $\abs{x} > R$. 

	Denote the above upper bound by $C_0$. 
	Then we have for every $t \in [0,T]$ and $u \in I$ that 
	\begin{equation*}
		\int_{\abs{x} > R} \mu(t,u,x)^2 dx \le C_0 \int_{\abs{x} > R} \mu(t,u,x) dx = C_0 \P (\abs{X_u(t)} > R) \le \frac{C_0 \E \abs{X_u(t)}^2}{R^2} \,.
	\end{equation*}
	A classical estimates shows that  $\sup_{t \in [0,T] ,u \in I} \E \abs{X_u(t)}^2 < \infty$, so the above quantity tends to 0 as $R \to \infty$. 
\end{proof}

\iffalse 
\begin{proof}[Proof of Lemma~\ref{l:conv-of-system}]
	Observe that 
	\begin{equation*}
		\abs{X^n_{\ceil{nu}} - X_u}^2 \le 2 \abs{X^n_{\ceil{nu}} - X_{\frac{\ceil{nu}}{n}}}^2 + 2 \abs{X_{\frac{\ceil{nu}}{n}} - X_u}^2 \,.
	\end{equation*}
	Theorem 3.2 of~\cite{BayraktarChakrabortyWu2023} implies 
	\begin{equation}
		\label{e:approx-Xu-part1}
		\sup_{t \in [0, T]} \sup_{u \in I} \E \abs{X^n_{\ceil{nu}}(t) - X_{\frac{\ceil{nu}}{n}}(t)}^2 \le O(n^{-1}) \,.
	\end{equation}
	
	On the other hand, from the proof of Theorem 2.1 of~\cite{BayraktarChakrabortyWu2023}, we see that there exists some constant $C > 0$, independent of $u$ and $n$, such that
	\begin{equation*}
		\sup_{t \in [0, T]} \E \abs{X_{\frac{\ceil{nu}}{n}}(t) - X_u(t)}^2 \le C \E \abs{X_{\frac{\ceil{nu}}{n}}(0) - X_u(0)}^2 + C \int_I \abs{G(\frac{\ceil{nu}}{n}, v) - G(u, v)}^2 dv \,.
	\end{equation*}
	Given Assumption~\ref{a:graphon} (1), and taking infimum over all couplings of $\mu_{\frac{\ceil{nu}}{n}}(0)$ and $\mu_{u}(0)$, we have 
	\begin{equation}
		\label{e:approx-Xu-part2}
		\E \abs{X_{\frac{\ceil{nu}}{n}}(t) - X_u(t)}^2 \lesssim \abs{\frac{\ceil{nu}}{n} - u}^2 \le \frac{1}{n^2} \,.
	\end{equation} 
	Joining~\ref{e:approx-Xu-part1} and~\eqref{e:approx-Xu-part2} completes the proof. 
\end{proof}
\fi 

\begin{proof}[Proof of Leamma~\ref{l:approx-Yu}]
	From the proof of Theorem 3.2 of~\cite{BayraktarChakrabortyWu2023} we see that there exists some constant $C > 0$ such that
	\begin{equation*}
		\E \abs{Y^n_i(t) - Y_{\frac{i}{n}}(t)}^2 \le 6C \max_{1 \le j \le n} \E \abs{X^n_j(t) - X_{\frac{j}{n}}(t)}^2 + \frac{6C}{n^2} 
	\end{equation*}
	for all $i = 1, \dots, n$ and $t \in [0, T]$, which gives the first inequality. 
	The second inequality follows immediately from dominated convergence. 
\end{proof}

\begin{proof}[Proof of Lemma~\ref{l:lip-cont-uv}]
	For the first part, we recall that $\mu$ satisfies the Fokker-Planck equation in the following sense:
	\begin{equation*}
		\p_t \mu_{t,u} + \grad \cdot (\beta_{t,u} \mu_{t,u}) = \frac{1}{2} \sum_{i,j=1}^{d} \p_{ij}^2 ((\sigma \sigma^T)_{ij} \mu_{t,u})  \,.
	\end{equation*}
	For simplicity we work under the (additional) assumption that $\sigma = \sigma_0 I_{d \times d}$ for some constant $\sigma_0 > 0$. 

	Given distinct $u, v \in I$, we have 
	\begin{align*}
		& \p_t (\mu_{t,u} - \mu_{t,v}) \\
		& \qquad = \frac{\sigma_0^2}{2} \lap (\mu_{t,u} - \mu_{t,v}) - \grad \cdot (\beta_{t,u} (\mu_{t,u} - \mu_{t,v})) - \grad \cdot (\mu_{t,v} (\beta_{t,u} - \beta_{t,v})) \,.
	\end{align*}
	Notice that, 
	\begin{equation*}
		\beta_{t,u}(x) = \int_I \int_{\R^d} b(x,y) G(u,u') \mu_{t,u'}(dy) du' \,,
	\end{equation*}
	so there exist some constant $c_1 > 0$ such that 
	\begin{align*}
		\abs{\beta_{t,u}(x) - \beta_{t,v}(x)} & \le \int_I \int_{\R^d} \abs{ b(x,y) (G(u,u') - G(v,u')) } \mu_{t,u'}(dy) du' \\
		& \le c_1 \norm{b}_\infty \int_I \int_{\R^d} \abs{u-v} \mu_{t,u'}(dy) du' \\
		& \le c_1 \norm{b}_\infty \abs{u-v} 
	\end{align*}
	and similarly, $\abs{\grad \cdot \beta_{t,u}(x) - \grad \cdot \beta_{t,v}(x)} \le c_1 \norm{\grad_x \cdot b}_\infty \abs{u-v}$, for any $u, v \in I$. 
	Then we have 
	\begin{align*}
		& \p_t (\mu_{t,u} - \mu_{t,v}) \le \\
		& \qquad \frac{\sigma_0^2}{2} \lap (\mu_{t,u} - \mu_{t,v}) - \grad \cdot \beta_{t,u} (\mu_{t,u} - \mu_{t,v}) - \beta_{t,u} \cdot \grad (\mu_{t,u} - \mu_{t,v}) + C \abs{u-v} \,,
	\end{align*}
	where the constant $C$ depends only on the Lipschitz coefficients of $b$ and $G$. 

	We compare the above inequality with the following differential equation 
	\begin{equation*}
		\p_t \varphi_t = \frac{\sigma_0^2}{2} \lap \varphi_t - \grad \cdot \beta_{t,u} \varphi_t - \beta_{t,u} \cdot \grad \varphi_t \,,
	\end{equation*}
	with initial condition $\varphi_0 = \mu_{0,u} - \mu_{0,v}$. 
	This is a linear homogeneous parabolic equation.
	Thanks to maximum principle, we have 
	\begin{equation*}
		\mu_{t,u} - \mu_{t,v} \le \varphi_t + C t \abs{u-v} \,.
	\end{equation*}
	It remains to bound $\varphi_t$. 

	Consider a time reversal of $\varphi$, namely $\psi(t,x) \defeq \varphi(T-t,x)$. 
	It satisfies the following equation 
	\begin{equation*}
		\p_t \psi_t + (\grad \cdot \beta_{T-t,u}) \psi_t + \beta_{T-t,u} \cdot \grad \psi_t + \frac{\sigma_0^2}{2} \lap \psi_t = 0 \,,
	\end{equation*}
	with terminal condition $\psi_T = \varphi_0$. 
	
	Note that $\grad \cdot \beta_{t,u} = \int_I \int_{\R^d} \grad_x \cdot b(x,y) G(w,w') \mu_{t,w'}(dy) dw'$ is bounded. 
	Then Feynman-Kac formula reads 
	\begin{align*}
		\psi(t,x) & = \tilde \E^{Z_t = x} \left[ \exp \left( \int_t^T \grad \cdot \beta_{T-s,u}(Z_s) ds \right) \psi_T(Z_T) \right]  \\
		& = \tilde \E^{Z_t = x} \left[ \exp \left( -\int_t^T \grad \cdot \beta_{s,u}(Z_{T-s})ds \right) \varphi_0(Z_T) \right] \,,
	\end{align*}
	where $(Z_t)_{t \in [0,T]}$ is a diffusion process with dynamics
	\begin{equation*}
		dZ_t = \beta(T-t,u, Z_t) dt + \sigma d \tilde W_t \,,
	\end{equation*}
	and $\tilde W$ is a $d$-dimensional Brownian motion, under the measure $\tilde \P$. 

	Thus we have
	\begin{equation*}
		\abs{\varphi_t(x)} = \abs{\psi(T-t,x)} \le \norm{\varphi_0}_\infty \exp \paren[\big]{ t \norm{\grad_x \cdot b}_\infty } \le \exp \paren[\big]{ t \norm{\grad_x \cdot b}_\infty } \rho_I(x) \abs{u-v} \,,
	\end{equation*}
	where $\rho_I$ is given by Condition~\ref{cd:init-data}(3). 
	This implies $\mu_{t,u} - \mu_{t,v} \lesssim \abs{u-v}$ at every fixed $x$, where the implicit constant is independent of $x,u,v,t$. 
	
	Similarly, the other direction $\mu_{t,v} - \mu_{t,u}$ produces the same bound. 
	Hence, there exists some constant $C > 0$ such that, for every $t \in [0, T]$, $x \in \R^d$, and $u,v \in I$, it holds that 
	\begin{equation*}
		\abs{\mu(t,u,x) - \mu(t,v,x)} \le C \abs{u-v} \,.
	\end{equation*}
\end{proof}

\section{Reduction of Assumption~\ref{as:FLmu}}\label{s:reduc-FLmu}

Recall the operator $\t = \f_I \f_{\R^d} \lin_\phi$.
Our estimator $\hat G$ requires computing the quantity $\frac{\t \hat \beta}{\t \hat \mu}$. 
If in a set $U \subset \R \times \R^d$ with positive Lebesgue measure we have $\t \mu = 0$, a good estimator $\hat \mu$ would lead to small $\abs{\t \hat \mu}$. 
This might blow up the fraction and keep us away from a good estimation for $G$. 
Therefore, the ad hoc assumption on the nonvanishing property of $\t \mu$ is somehow inevitable in this problem. 
It is also commonly seen in learning sample density with unknown error distribution (see~\cite{Johannes2009} for instance). 

However, the assumption~\ref{as:FLmu} is not a trivial property of $\mu$. 
Given the nonlinearity of the Fokker-Planck equation associated to~\eqref{e:graphon-SDE}, computating the explicit formula for $\t \mu$ is mostly impossible. 
To the best of our knowledge, no explicit solutions for graphon systems~\eqref{e:graphon-SDE} satisfying all our assumptions have been presented.
In this appendix, we study some special cases, where Assumption~\ref{as:FLmu} is reduced to some weaker conditions that are easier to verify.   
We work under the hypothesis of Theorem~\ref{t:main}, and assume $\sigma = I_{d \times d}$ for simplicity. 

\subsubsection*{Case 1: Degeneration to a homogeneous system}

Suppose $I \ni u \mapsto \mu_{0, u} \in \prob(\R^d)$ is constant. 
Corollary 2.3, \cite{Coppini2022note}, states that, given that the map 
\begin{equation*}
	I \ni u \mapsto \int_0^1 G(u, v) dv
\end{equation*}
is also constant (denoted by $\bar g$), the density map $I \ni u \mapsto \mu_u \in \prob(\c_d)$ is also constant, and it solves the classical McKean-Vlasov equation
\begin{equation*}
	\p_t \bar \mu (t, x) = \frac{1}{2} \lap \bar \mu(t, x) - \grad \cdot \left( \bar \mu(t, x) \int_{\R^d} \bar g b(x, y) \bar \mu(t, y) dy \right) \,,
\end{equation*}
where we may view $\bar g b$ as a single quantity. 
In our model $G(u, v) = g(u-v)$, $g$ must be 1-periodic on $[-1, 1]$. 

Notice that for $w \neq 0$, 
\begin{align*}
	\t \mu (w, \xi) & = \int_0^T \phi(t) \int_\R e^{-iwu} \int_{\R^d} e^{-i\xi \cdot x} \mu(t, u, x) dx du dt \\
	& = \int_0^T \phi(t) \int_0^1 e^{-iwu} \int_{\R^d} e^{-i\xi \cdot x} \mu(t, u, x) dx du dt \\
	& = \int_0^T \phi(t) \frac{1 - e^{-iw}}{iw} \f \bar \mu_t (\xi) dt \\
	& = \frac{1 - e^{-iw}}{iw} \f \mathcal{L} \bar \mu (\xi) \,.
\end{align*}
Note that $1 - e^{-iw} = 0$ if and only if $w = 2 \pi k$ for $k \in \Z$, which means it is nonzero $dw$-a.e. 
Then our Assumption~\ref{as:FLmu} reduces to Assumption 16 (on the solution $\bar \mu$) in~\cite{MaestraHoffmann2022}.

\subsubsection*{Case 2: Denegeration to a finite graph}

Define the \emph{degree} of an index $u$ with respect to a subset $J \subset I$ by 
\begin{equation*}
	\deg_J (u) = \int_J G(u, v) dv \,,
\end{equation*}
and 
\begin{equation*}
	\deg (u) = \int_0^1 G(u, v) dv \,.
\end{equation*}

Consider the partition $I = \bigcup_{j=1}^m I_j$, and denote by $[u_0]$ the subset $I_j \ni u_0$. 
Assume the degree on each part of the partition is constant, i.e., for $u_0, u_1, u_2 \in I$,  we have 
\begin{equation}
	\label{e:constant-degree-on-partition}
	\deg_{[u_0]}(u_1) = \int_{[u_0]} G(u_1, v) dv = \int_{[u_0]} G(u_2, v) dv = \deg_{[u_0]} (u_2) \,,
\end{equation}
whenever $[u_1] = [u_2]$. 
An example is $G(u, v) = g(u-v)$, where $g$ is defined on $\R$, supported on $[-1, 1]$, and $\frac{1}{m}$-periodic on $[-1, 1]$. 
Then we take $I_j = (\frac{j-1}{m}, \frac{j}{m}]$ for every $j = 1, \dots, m$, so that 
\begin{equation*}
	\deg_{I_j} (u_1) = \int_{\frac{j-1}{m}}^{\frac{j}{m}} g(u_1 - v) dv = \int_{u_1 - \frac{j-1}{m}}^{u_1 - \frac{j}{m}} g(v) dv = \int_{0}^{\frac{1}{m}} g(v) dv \,.
\end{equation*}

Assume further that the initial data map $u \mapsto \mu_{0,u} \in \prob(\R^d)$ is also constant on each $I_j$. 
Theorem 2.1 in~\cite{Coppini2022note} tells us that the map $u \mapsto \mu \in \prob(\c_d)$ is constant in each part of the partition, that is, $\mu_{u_1} = \mu_{u_2}$ whenever $[u_1] = [u_2]$. 

We set $\mu^{(j)} (t, x) = \mu_{t,u}(x)$, for $u \in I_j$, $j = 1, \dots, m$, 
and set $D_{jk} = d_{I_k} (u)$ for $u \in I_j$, which is well-defined due to~\eqref{e:constant-degree-on-partition}. 
Then they satisfy a family of coupled equations: 
\begin{equation*}
	\p_t \mu^{(j)} = \frac{1}{2} \lap \mu^{(j)} - (D \vec{1})_j \grad \cdot (\mu^{(j)} V) - \grad \cdot \left(\mu^{(j)}  \sum_{k=1}^{m} D_{jk} F \ast \mu^{(k)} \right) \,, \qquad j = 1, \dots, m \,.
\end{equation*}
where $D = (D_{jk})$ is treated as an $m \times m$ matrix. 
Assume that all $D_{jk}$ are equal to some constant $d_0 > 0$. 
We see that 
\begin{equation*}
	\p_t \mu^{(j)} = \frac{1}{2} \lap \mu^{(j)} - m d_0 \grad \cdot (\mu^{(j)} V)  - \grad \cdot \left( \mu^{(j)} d_0 F \ast \paren[\big]{ \sum_{k=1}^{m} \mu^{(k)}  } \right) \,.
\end{equation*}
Let $\bar \mu = \frac{1}{m} \sum_{j=1}^{m} \mu^{(j)} $ (it is indeed $\int_I \mu_u du$ in this situation), then it solves 
\begin{equation}
	\label{e:pde-for-nu}
	\p_t \bar \mu = \frac{1}{2} \lap \bar \mu - m d_0 \grad \cdot (\bar \mu V) - m d_0 \grad \cdot (\bar \mu F \ast \bar \mu) \,.
\end{equation}
Note that $\bar \mu \in \prob(\R^d)$, and we may define
\begin{equation*}
	\bar b(t, x, \bar \mu_t) \defeq m d_0 (V(x) + \int_{\R^d} F(x - y) \bar \mu_t(dy)) \,.
\end{equation*}
Then~\eqref{e:pde-for-nu} is the associated Fokker-Planck equation for the mean-field diffusion process 
\begin{equation*}
	d U_t = \bar b(t, U_t, \bar \mu_t) dt + d \bar B_t \,, \qquad U_0 \sim \bar \mu_0 \,,
\end{equation*}
and $\bar \mu$ is the density of $U$. 

Now we may compute 
\begin{align*}
	\t \mu (w, \xi)  = \sum_{k=1}^{m} c_k(w)\f \mathcal{L} \mu^{(k)} (\xi) \,,
\end{align*}
where in this case 
\begin{equation*}
	c_k(w) \defeq \int_{I_k} e^{-iwv} dv = \begin{cases}
		\frac{e^{- \frac{iwk}{m}} (e^{\frac{iw}{m}}-1)}{iw} & w \neq 0 \\
		\frac{1}{m} & w = 0
	\end{cases} .
\end{equation*}
We focus on the part where $w \neq 0$, where $\t \mu \neq 0$ if and only if 
\begin{equation*}
	\f \mathcal{L} \paren[\big]{ \sum_{k=1}^{m} e^{-\frac{iwk}{m}}  \mu^{(k)} } \neq 0 \,.
\end{equation*}
Set
\begin{equation*}
	\rho_w \defeq \sum_{k=1}^{m} e^{-\frac{iwk}{m}} {\mu^{(k)} } \,.
\end{equation*}
Notice that for every $w \in \R$, $\rho_w$ solves the linear differential equation 
\begin{equation}
	\label{e:pde-for-rhow}
	\p_t \rho = \frac{1}{2} \lap \rho - m d_0 \grad \cdot (\rho (V + F \ast \bar \mu_t)) \,.
\end{equation}
Its canonical diffusion process has the following dynamics
\begin{equation*}
	d R_t = \bar \beta (t, R_t) dt +  d B_t \,,
\end{equation*}
where $\bar \beta (t, x) = \bar b(t, x, \bar \mu_t)$. 
Then Assumption~\ref{as:FLmu} now reduces to Assumption 16 in~\cite{MaestraHoffmann2022} on $\rho_w$ for almost every $w \in [0, 2\pi m]$. 

\begin{remark}
	In the most general case, $(\mu_u)_{u \in I}$ are the solutions to a system of infinitely many fully coupled nonlinear differential equations. 
	There are no explicit formulae either for $\f_I \mu_t (w) = \int_I e^{-iwu} \mu_{t,u} du$ to fit into a condition involving only the operator $\f_{\R^d} \lin$. 
	However, each $\rho_w$ in Case 2 is now the solution to some linear equation (though the coefficients involve $\bar \mu_t$, which solves some other equation and could be seen as a known quantity).
	The assumption becomes much milder in this sense, and that is the main reduction in Case 2. 
\end{remark}

\bibliographystyle{abbrv}
\bibliography{refs}

\end{document}